\newtheorem{theorem}{Theorem}[section]
\newtheorem{definition}[theorem]{Definition}
\newtheorem{lemma}[theorem]{Lemma}
\newtheorem{corollary}[theorem]{Corollary}
\numberwithin{equation}{section}
\newcommand{\rest}{\restriction}
\def\a{\alpha}
\renewcommand{\models}{\vDash}
\newcommand{\powerset}{{\cal P}}
\def\P{{\mathcal{P} }}
\def\Q{{\mathcal{ Q}}}
\def\K{{\mathcal{ K}}}
\def\L{{\rm{L}}}
\def\R{{\mathcal R}}
\def\M{{\mathcal{M}}}
\def\N{{\mathcal{N}}}
\def\F{{\mathcal{F}}}
\def\S{{\mathcal{S}}}
\def\F{{\mathcal{F}}}
\newcommand{\rthm}[1]{Theorem~\ref{#1}}
\newcommand{\rlem}[1]{Lemma~\ref{#1}}
\newcommand{\insegeq}{\trianglelefteq}
\newcommand{\inseg}{\triangleleft}
\begin{document}
\title{Determinacy in $L(\mathbb{R},\mu)$}
\date{\today}
\author{Nam Trang\\
Department of Mathematical Sciences\\
Carnegie Mellon University\\
namtrang@andrew.cmu.edu}
\maketitle
\begin{abstract}
Assume $V=L(\mathbb{R},\mu) \vDash \textsf{ZF} + \textsf{DC} + \Theta > \omega_2 + \mu $ is a normal fine measure on $\powerset_{\omega_1}(\mathbb{R})$. We analyze what sets of reals are determined and in fact show that $L(\mathbb{R},\mu) \vDash \textsf{AD}$. This arguably gives the most optimal characterization of \textsf{AD} in $L(\mathbb{R},\mu)$. As a consequence of this analysis, we obtain the equiconsistency of the theories: ``\textsf{ZFC} + There are $\omega^2$ Woodin cardinals" and ``$\textsf{ZF} + \textsf{DC} + \Theta>\omega_2 \ + $ There is a normal fine measure on $\powerset_{\omega_1}(\mathbb{R})$".
\end{abstract}
\thispagestyle{empty}
\thispagestyle{empty}

\section{Introduction}
A measure $\mu$ on $\powerset_{\omega_1}(\mathbb{R})$ is \textbf{fine} if for all $x\in\mathbb{R}$, $\mu(\{ \sigma \in \powerset_{\omega_1}(\mathbb{R}) \ | \ x\in \sigma\}) = 1$. $\mu$ is \textbf{normal} if for all functions $F: \powerset_{\omega_1}(\mathbb{R})\rightarrow \powerset_{\omega_1}(\mathbb{R})$ such that $\mu(\{ \sigma \ | \ F(\sigma)\subseteq \sigma\}) = 1$, there is an $x\in \mathbb{R}$ such that $\mu(\{\sigma \ | \ x \in F(\sigma)\}) = 1$. A normal fine measure $\mu$ on $\powerset_{\omega_1}(\mathbb{R})$ is often called the Solovay measure. Solovay (in \cite{solovay1978independence}) has shown the existence (and uniqueness) of a normal fine measure on $\powerset_{\omega_1}(\mathbb{R})$ under $\textsf{AD}_\mathbb{R}$. It is natural to ask whether the existence of a normal fine measure on $\powerset_{\omega_1}(\mathbb{R})$ has consistency strength that of $\textsf{AD}_\mathbb{R}$. It is well-known that the existence of an $L(\mathbb{R},\mu)\footnote{By $L(\mathbb{R},\mu)$ we mean the model constructed from the reals and using $\mu$ as a predicate. We will also use the notation $L(\mathbb{R})[\mu]$ and $L_{\alpha}(\mathbb{R})[\mu]$ in various places in the paper.}$ that satisfies ``$\rm{\textsf{ZF} + \textsf{DC} \ + }\ \mu$ is a normal fine measure on $\powerset_{\omega_1}(\mathbb{R})$" is equiconsistent with that of a measurable cardinal; this is much weaker than the consistency strength of $\textsf{AD}_\mathbb{R}$. The model $L(\mathbb{R},\mu)$ obtained from standard proofs of the equiconsistency satisfies $\Theta\footnote{$\Theta$ is the sup of all $\alpha$ such that there is a surjection from $\mathbb{R}$ onto $\alpha$} = \omega_2$ and hence fails to satisfy \textsf{AD}. So it is natural to consider the situations where $L(\mathbb{R},\mu) \vDash \Theta > \omega_2$ and try to understand how much determinacy holds in this model; furthemore, one can try to ask what the exact consistency strength of the theory ``$\Theta > \omega_2$ + there is a normal fine measure on $\powerset_{\omega_1}(\mathbb{R})$" is. What about the seemingly stronger theory ``$\textsf{AD}$ + there is a normal fine measure on $\powerset_{\omega_1}(\mathbb{R})$"?
\\
\indent We attempt to answer some of the above questions in this paper. First, to analyze the sets of reals that are determined in such a model, which we will call $V$, we run the core model induction in a certain submodel of $V$ that agrees with $V$ on all bounded subsets of $\Theta$. This model will be defined in the next section. What we'll show is that $K(\mathbb{R}) \vDash \textsf{\textsf{AD}}^+$ where 
\begin{equation*}
K(\mathbb{R}) = L(\cup\{ \M \ | \ \M \textrm{ is an } \mathbb{R}\textrm{-premouse, } \rho(\M) = \mathbb{R}, \textrm{ and } \M \textrm{ is countably iterable}\footnote{An $\mathbb{R}$-premouse $\M$ is countably iterable if any countable hull of $\M$ is $\omega_1+1$ iterable.}\}).
\end{equation*}
We will then show $\Theta^{K(\mathbb{R})} = \Theta$ by an argument like that in Chapter 7 of \cite{CMI}. Finally, we prove that 
\begin{equation*}
\powerset(\mathbb{R}) \cap K(\mathbb{R}) = \powerset(\mathbb{R}),
\end{equation*}
which implies $L(\mathbb{R},\mu) \vDash \textsf{\textsf{AD}}^+$. We state the main result of this paper.
\begin{theorem}
\label{characterization of AD}
Suppose $V = L(\mathbb{R},\mu) \vDash \rm{\textsf{ZF} + \textsf{DC} \ + }\ \Theta > \omega_2 + \mu$ is a normal fine measure on $\powerset_{\omega_1}(\mathbb{R})$. Then $L(\mathbb{R},\mu) \vDash \textsf{\textsf{AD}}^+$.
\end{theorem}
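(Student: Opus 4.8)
The plan is to carry out the three-step program outlined in the introduction. First I would pass from $V$ to the inner model $N$ (defined in Section~2), which agrees with $V$ low in the $\powerset(\mathbb{R})$-hierarchy — in particular on all bounded subsets of $\Theta$, so that $\Theta^N=\Theta$ — while still carrying a normal fine measure on $\powerset_{\omega_1}(\mathbb{R})$; morally $N$ is relativized constructibility over $\mathbb{R}$, the bounded subsets of $\Theta$, and $\mu$. I would then verify that $N\vDash\textsf{ZF}+\textsf{DC}+\Theta>\omega_2$ together with ``$\mu\cap N$ is a normal fine measure on $\powerset_{\omega_1}(\mathbb{R})$''. Working in $N$ rather than in $V$ is harmless because the core model induction manipulates only the Wadge hierarchy strictly below $\Theta$, so the sets of reals of $V$ living beyond $\Theta$-levels are irrelevant and can be discarded; and since $N$ and $V$ have the same reals and the same bounded subsets of $\Theta$, the $\mathbb{R}$-mice entering the definition of $K(\mathbb{R})$ are the same in $N$ as in $V$.

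Second — the technical core — I would run the core model induction inside $N$ to construct the $\mathbb{R}$-mice making up $K(\mathbb{R})$ one Wadge-level at a time, verifying $\textsf{AD}^+$ for the sets of reals they capture as the induction proceeds. The induction is organized along the $\textsf{AD}^+$ $\Theta$-hierarchy: at a typical successor stage one reflects a putative failure of determinacy (or of the scale property, or of Suslin-co-Suslin) down to a countable hull, uses the mice already built to construct the next $\mathbb{R}$-mouse, and establishes its iterability by a branch-uniqueness / absoluteness argument; at limit stages one takes unions. The normal fine measure $\mu$ is used exactly at the stages where a ``gap'' in the Wadge hierarchy must be crossed: there, following Solovay's analysis of this measure under $\textsf{AD}_\mathbb{R}$, $\mu$ delivers the correct closure and scale properties of the pointclass in play and, crucially, produces iterability of the next mouse from $\mu$-measure-one families of countable $\sigma\subseteq\mathbb{R}$. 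The conclusion of this step is $K(\mathbb{R})\vDash\textsf{AD}^+$.

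Third, I would show $\Theta^{K(\mathbb{R})}=\Theta$. Since $K(\mathbb{R})\subseteq N$ and $\Theta^N=\Theta$ we have $\Theta^{K(\mathbb{R})}\le\Theta$, so the point is to rule out $\Theta^{K(\mathbb{R})}<\Theta$; this is the argument modeled on Chapter~7 of \cite{CMI}. Assuming $\Theta^{K(\mathbb{R})}<\Theta$ there is a surjection $\mathbb{R}\to\Theta^{K(\mathbb{R})}$ not in $K(\mathbb{R})$, and one derives from it — again using $\mu$ to exclude the degenerate case in which the induction halts prematurely — that $N$ contains an $\mathbb{R}$-mouse, or a model of $\textsf{AD}^+$, strictly past the stage at which the induction stopped, contradicting maximality. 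Finally, from $K(\mathbb{R})\vDash\textsf{AD}^+$ and $\Theta^{K(\mathbb{R})}=\Theta$, together with the definability structure of $V=L(\mathbb{R},\mu)$ — every set of reals appears at a level of the $L(\mathbb{R},\mu)$-hierarchy below a fixed bound, and a reflection/comparison argument places it inside one of the mice — I would conclude $\powerset(\mathbb{R})\cap K(\mathbb{R})=\powerset(\mathbb{R})$. Then $L(\mathbb{R},\mu)$ and its internal $K(\mathbb{R})$-version have the same $\powerset(\mathbb{R})$, and since $\textsf{AD}^+$ is downward absolute to $L(\powerset(\mathbb{R}))$ this yields $L(\mathbb{R},\mu)\vDash\textsf{AD}^+$.

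I expect the core model induction of the second step to be the main obstacle. The standard $K(\mathbb{R})$ core model induction of \cite{CMI} is usually run under $\textsf{AD}_\mathbb{R}$, or inside a model where much determinacy is already available; here the sole ``largeness'' hypothesis is a single normal fine measure on $\powerset_{\omega_1}(\mathbb{R})$. The work is therefore to isolate, at each stage of the hierarchy, precisely which of Solovay's measure-theoretic lemmas are needed, to check that they survive the passage to the countable hull, and to verify that what survives still suffices to produce the next mouse together with its iterability.
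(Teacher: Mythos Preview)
Your three-step outline matches the paper's, but the mechanisms you describe for each step are not the ones that actually work, and in two places the description is affirmatively wrong.

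\textbf{Step 2 (the core model induction).} You say $\mu$ is used ``at the stages where a `gap' in the Wadge hierarchy must be crossed'' by invoking ``Solovay's measure-theoretic lemmas'' to get scales and iterability. That is not how the paper proceeds, and it could not be: Solovay's analysis of the measure is carried out under $\textsf{AD}_{\mathbb{R}}$, which is what we are trying to prove. What the paper does instead is use $\mu$ to form the ultraproducts $M=\prod_\sigma \mathrm{HOD}_{\sigma\cup\{\sigma\}}/\mu$ and $H_a=\prod_\sigma \mathrm{HOD}^{M_\sigma}_{\{a,\sigma\}}/\mu$; these are \textsf{ZFC} models in which $\Omega=[\sigma\mapsto\omega_1]_\mu>\Theta$ is measurable. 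The inductive step (Theorem~\ref{PD}) is then: given a nice operator $F$, build $K^F$ inside $H_x$ up to $\Omega$, prove $(\kappa^+)^{K^F}=\omega_2$ (where $\kappa=\omega_1^V$) by a Schindler--Jensen style maximality-of-$K$ argument, extract the canonical $\square_\kappa$-sequence of $K^F$, and contradict it with the supercompactness-type measure on $\powerset_{\omega_1}(\omega_2)$ induced by $\mu$. The measure is thus used to \emph{kill square in the core model}, not to supply scales or iterability directly. Your proposal contains no trace of the $H_a$ models, of $K^F$, or of the square contradiction, and without these there is no engine to produce $\M_1^{\sharp,F}$.

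\textbf{Steps 3 and 4.} You describe the contradiction to $\Theta^{K(\mathbb{R})}<\Theta$ as ``deriving an $\mathbb{R}$-mouse strictly past the stage at which the induction stopped, contradicting maximality'', and the final step as ``a reflection/comparison argument''. Both miss the key lemma: the amenability of $\mu$ to $K(\mathbb{R})$ (Lemma~\ref{amenability}), i.e.\ that $\mu$ restricted to any Wadge-initial segment of $\powerset(\mathbb{R})^{K(\mathbb{R})}$ already lies in $K(\mathbb{R})$. In Step~3 the paper builds a hod-pair strategy $\Sigma'_\sigma$ with branch condensation, passes to $K^{\Sigma'_\sigma}(\mathbb{R})\vDash\textsf{AD}^++\Theta>\theta_0$, and then uses amenability together with the \emph{uniqueness} of the Solovay measure (so that the restriction of $\mu$ is $\mathrm{OD}$ there) to conclude $L(\mathbb{R},\mu)\subseteq\mathrm{HOD}_{\mathbb{R}}$ of that model --- a contradiction. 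In Step~4 (Lemma~\ref{full determinacy}) amenability is again the crux: if $A\in L_{\alpha+1}(\mathbb{R})[\mu]\setminus K(\mathbb{R})$ with $\powerset(\mathbb{R})\cap L_\alpha(\mathbb{R})[\mu]\subsetneq\powerset(\mathbb{R})\cap K(\mathbb{R})$, then $\mu\rest L_\alpha(\mathbb{R})[\mu]\in K(\mathbb{R})$ and hence $A\in K(\mathbb{R})$. Your ``reflection/comparison'' sketch does not supply this, and without amenability there is no way to pull an $L(\mathbb{R},\mu)$-definable set back into $K(\mathbb{R})$.
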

Woodin has shown the following.
\begin{theorem}
\label{mu unique}
Suppose $L(\mathbb{R},\mu) \vDash \textsf{\textsf{AD}}\ + \mu$ is a normal fine measure on $\powerset_{\omega_1}(\mathbb{R})$. Then $L(\mathbb{R},\mu) \vDash \textsf{\textsf{AD}}^+ + \mu$ is unique.
\end{theorem}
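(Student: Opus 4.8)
Write $N = L(\mathbb{R},\mu)$ and assume $N \vDash \textsf{AD}$. Recall that $\textsf{AD}^+$ is the conjunction of three statements: $\textsf{DC}_{\mathbb{R}}$; ``every $A \subseteq \mathbb{R}$ is $\infty$-Borel'', i.e.\ $A = \{x \in \mathbb{R} : L[S,x] \vDash \varphi(S,x)\}$ for a set of ordinals $S$ and a formula $\varphi$; and \emph{ordinal determinacy}, i.e.\ for all $\lambda < \Theta$, all continuous $\pi : \lambda^\omega \to \omega^\omega$ and all $A \subseteq \mathbb{R}$, the game on $\lambda$ with payoff $\pi^{-1}[A]$ is determined. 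The plan is to verify these three clauses in $N$ and then the uniqueness of $\mu$. For $\textsf{DC}_{\mathbb{R}}$ (and in fact $\textsf{DC}$) I would adapt Kechris's proof that determinacy yields $\textsf{DC}$ in $L(\mathbb{R})$: every element of $N$ is $\mathrm{OD}$ from a real and $\mu$, so a potential counterexample is coded by such parameters, and one uses the basis and uniformization consequences of $\textsf{AD}$, together with $\mu$, to choose a branch. The main tool for the remaining two clauses is the ultrapower $j = j_\mu : N \to M := \mathrm{Ult}(N,\mu)$. Since $\mu$ is an ultrafilter, $\textsf{AD}$ forces it to be countably complete (else one reads off a nonprincipal ultrafilter on $\omega$), so $M$ is wellfounded, $M \vDash \textsf{AD}$, and $j$ is elementary; moreover $j \restriction \mathbb{R} = \mathrm{id}$ while $[\mathrm{id}]_\mu = \mathbb{R}$ by fineness and normality, so $M$ regards $\mathbb{R} = [\mathrm{id}]_\mu$ as a \emph{countable} subset of its own reals $\mathbb{R}^M \supsetneq \mathbb{R}$.

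The heart of the argument is $\infty$-Borelness. Given $A \subseteq \mathbb{R}$ in $N$, fix $\alpha$, reals $\vec r$, ordinals $\vec\xi$ and a formula $\varphi$ with $A = \{x : L_\alpha(\mathbb{R},\mu) \vDash \varphi[x,\vec\xi,\vec r]\}$. Since $\{\sigma : x \in A\}$ is $\powerset_{\omega_1}(\mathbb{R})$ or $\emptyset$ for each fixed $x$, we get $j(A) \cap \mathbb{R} = A$, hence for $x \in \mathbb{R}$ one has $x \in A$ iff $L_{j(\alpha)}(\mathbb{R}^M, j(\mu)) \vDash \varphi[x, j(\vec\xi), \vec r]$. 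As $\mathbb{R}$ is countable in $M$, this realizes $A$ as a set decided, uniformly in $x$, inside the wellfounded model $M$ from ordinal parameters and a code for $j(\mu)$, so a code for the first-order theory of $M$ with these parameters is the desired $\infty$-Borel code. Ordinal determinacy is handled by a parallel but heavier argument, working with the $\omega$-fold iterate $N_\omega$ of $N$ under $\mu$, whose reals $\mathbb{R}^{N_\omega} = \bigcup_n \mathbb{R}^{N_n}$ and the attendant direct limit system supply the Suslin representations needed to reflect the relevant games; throughout, Woodin's theorem that $\textsf{AD}^+$ holds in $L(\mathbb{R})$ under determinacy serves as the base case of an induction pushing the $\textsf{AD}^+$-clauses up the levels of $N$. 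I expect the principal obstacle to lie exactly in making these transfers precise: $j(\mu)$ is a ``third-order'' object, so one must first argue that in $M$ it is coded by a set of reals of bounded Wadge rank -- which has an $\infty$-Borel code by the inductive hypothesis -- and one must verify that $M$, respectively $N_\omega$, decides ``$x\in A$'' correctly at \emph{all} reals, not just at those in $\mathbb{R}$. Keeping the Wadge ranks of the codes under control as one climbs, and confirming wellfoundedness of the $\omega$-iteration with the choice principles available in $N$, are the delicate points.

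For uniqueness, let $\mu'$ be any normal fine measure on $\powerset_{\omega_1}(\mathbb{R})$ in $N$. First, $\mu'$ extends the $\omega$-club filter -- a standard consequence of normality, fineness and $\textsf{DC}_{\mathbb{R}}$ -- and, via the surjection of $\mathbb{R}$ onto $\powerset_{\omega_1}(\mathbb{R})$, it corresponds to a set of reals, hence is $\infty$-Borel by the first part. I would then run Solovay's reflection argument in a form that does not presuppose $\textsf{AD}_{\mathbb{R}}$: for $\mu'$-almost every $\sigma$, $\sigma$ is the set of reals of an elementary hull of $(H_\Theta^N;\in,\mu')$ computing the $\textsf{AD}^+$-structure of $N$ (the Wadge order, the Solovay sequence, and so on) correctly on $\sigma$; such a $\sigma$ already determines $\mu' \restriction \sigma$ in a way independent of $\mu'$, so $\mu'$ is recovered as the direct limit of the ultrapowers of these hulls, giving $\mu' = \mu$. (Equivalently, one reduces this to a uniqueness statement about the internal ultrapower $j_\mu \restriction \H^N$ of $\H^N$, settled by the fine-structural analysis of $\H^N$; when $N \vDash \textsf{AD}_{\mathbb{R}}$ it is simply Solovay's theorem that the club filter is an ultrafilter.) Once uniqueness is known, $\mu$ is $\mathrm{OD}$ in $N$, which in retrospect removes $j(\mu)$ as a genuine parameter in the $\infty$-Borel codes above.
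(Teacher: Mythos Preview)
The paper does not contain a proof of this theorem: it is attributed to Woodin (``Woodin has shown the following'') and used thereafter as a black box, with no argument or detailed reference given. There is therefore no paper proof against which to compare your attempt.

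As for your sketch on its own terms, the central $\infty$-Borel step has a circularity that you flag but do not close. From $x\in A \Leftrightarrow L_{j(\alpha)}(\mathbb{R}^M,j(\mu))\vDash\varphi[x,j(\vec\xi),\vec r]$ you want an $\infty$-Borel code for $A$ in $N$, i.e.\ a set of ordinals $S\in N$ with $x\in A \Leftrightarrow L[S,x]\vDash\psi[S,x]$. But the right-hand side lives in $M=L(\mathbb{R}^M,j(\mu))$, a class-sized structure over the strictly larger $\mathbb{R}^M$, and ``a code for the first-order theory of $M$'' is not a set of ordinals. Your proposed fix---code $j(\mu)$ by a set of $M$-reals of bounded Wadge rank and invoke an inductive hypothesis---is a statement internal to $M$ about $\mathbb{R}^M$; an $\infty$-Borel code there is relative to $\mathbb{R}^M$ and does not hand you a set of ordinals that decides $A$ over $L[S,x]$ for $x\in\mathbb{R}$. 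The ordinal-determinacy clause is essentially unargued: naming $N_\omega$ and ``the attendant Suslin representations'' without saying which trees you extract, why they lie in $N$, and why they decide games on $\lambda<\Theta^N$ correctly there, leaves the real work undone. Your uniqueness paragraph is closer in spirit to the Solovay-game style arguments one expects, but the crucial assertion---that for a $\mu'$-typical $\sigma$ the restriction $\mu'\restriction\sigma$ is determined independently of $\mu'$---is exactly the content to be proved and is asserted rather than established.
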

Combining the results in Theorem \ref{mu unique} and Theorem \ref{characterization of AD}, we get the following.
\begin{corollary}
\label{ADmuunique}
Suppose $V = L(\mathbb{R},\mu) \vDash \rm{\textsf{ZF} + \textsf{DC} \ + }\ \Theta > \omega_2 + \mu$ is a normal fine measure on $\powerset_{\omega_1}(\mathbb{R})$. Then $L(\mathbb{R},\mu) \vDash \textsf{\textsf{AD}}^+ + \mu$ is unique.
\end{corollary}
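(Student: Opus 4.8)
The plan is to derive this directly from the two preceding theorems, so no new machinery is needed beyond what already goes into \rthm{characterization of AD}. First I would apply \rthm{characterization of AD} to the model $V = L(\mathbb{R},\mu)$: since $V$ satisfies $\textsf{ZF} + \textsf{DC} + \Theta > \omega_2$ together with ``$\mu$ is a normal fine measure on $\powerset_{\omega_1}(\mathbb{R})$'', that theorem gives $L(\mathbb{R},\mu) \vDash \textsf{AD}^+$, and in particular $L(\mathbb{R},\mu) \vDash \textsf{AD}$. This already yields the ``$\textsf{AD}^+$'' half of the corollary; only the uniqueness of $\mu$ remains.

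For the uniqueness, I would check that the hypotheses of \rthm{mu unique} are now met inside $L(\mathbb{R},\mu)$. The measure $\mu$ is the same predicate throughout, so ``$\mu$ is a normal fine measure on $\powerset_{\omega_1}(\mathbb{R})$'' continues to hold in $L(\mathbb{R},\mu)$ — it is part of the standing hypothesis — and we have just established $L(\mathbb{R},\mu) \vDash \textsf{AD}$. Hence $L(\mathbb{R},\mu) \vDash \textsf{AD} + \mu$ is a normal fine measure on $\powerset_{\omega_1}(\mathbb{R})$, which is exactly the hypothesis of Woodin's \rthm{mu unique}. Applying that theorem gives $L(\mathbb{R},\mu) \vDash \textsf{AD}^+ + \mu$ is unique, as desired.

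Since the conclusion is a formal consequence of the two cited results, there is no genuine obstacle in the argument for the corollary itself; the substance lies entirely in the proofs of \rthm{characterization of AD} (the core model induction in the submodel of $V$ agreeing with $V$ on bounded subsets of $\Theta$, the computation $\Theta^{K(\mathbb{R})} = \Theta$, and the verification $\powerset(\mathbb{R}) \cap K(\mathbb{R}) = \powerset(\mathbb{R})$) and of \rthm{mu unique}. The only point deserving a moment's care is that passing from $V$ to the knowledge that $V \vDash \textsf{AD}$ does not disturb $\mu$ or alter its status as a normal fine measure, so that \rthm{mu unique} is genuinely applicable.
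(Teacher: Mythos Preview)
Your proposal is correct and matches the paper's own treatment: the corollary is stated as an immediate consequence of combining \rthm{characterization of AD} and \rthm{mu unique}, with no separate proof given. Your extra remark that deducing $\textsf{AD}$ does not disturb the hypothesis on $\mu$ is fine but not strictly needed, since we remain in the same model $L(\mathbb{R},\mu)$ throughout.
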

The equiconsistency result we get from this analysis is stated in Theorem \ref{Equiconsistency}. We should mention that these theories are still much weaker than $\textsf{AD}_\mathbb{R}$ in consistency strength.
\begin{theorem}
\label{Equiconsistency}
The following theories are equiconsistent.
\begin{enumerate}
\item $\textsf{\textsf{\textsf{ZF}C}} \ + $ There are $\omega^2$ Woodin cardinals.
\item $\rm{\textsf{ZF} + \textsf{DC} + \textsf{AD}^+} \ + $ There is a normal fine measure on $\powerset_{\omega_1}(\mathbb{R})$.
\item $\rm{\textsf{ZF} + \textsf{DC} \ + }\ \Theta > \omega_2 \ + $ There is a normal fine measure on $\powerset_{\omega_1}(\mathbb{R})$.  
\end{enumerate}
\end{theorem}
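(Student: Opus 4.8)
The plan is to establish the cycle $\mathrm{Con}(1)\Rightarrow\mathrm{Con}(2)\Rightarrow\mathrm{Con}(3)\Rightarrow\mathrm{Con}(1)$. The implication $\mathrm{Con}(2)\Rightarrow\mathrm{Con}(3)$ is immediate, since theory (2) already implies theory (3): $\textsf{AD}^+\Rightarrow\textsf{AD}$, and $\textsf{AD}$ gives $\delta^1_2<\delta^1_3<\Theta$ together with $\delta^1_2=\omega_2$, so in particular $\Theta>\omega_2$. For $\mathrm{Con}(3)\Rightarrow\mathrm{Con}(1)$ I would first reduce to theory (2). Given $W\vDash(3)$ with measure $\mu$, pass to the inner model $N=L(\mathbb{R},\mu\cap N)^W$. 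Then $N\vDash\textsf{ZF}$ and $N\vDash V=L(\mathbb{R},\mu)$; fineness of $\mu\cap N$ is trivial and normality is inherited downward (there are only fewer functions $F\colon\powerset_{\omega_1}(\mathbb{R})\to\powerset_{\omega_1}(\mathbb{R})$ in $N$ to verify); every surjection $\mathbb{R}\twoheadrightarrow\alpha$ witnessing $\Theta^W>\omega_2^W$ is coded by a set of reals and hence lies in $N$, so $\Theta^N\geq\Theta^W>\omega_2^W\geq\omega_2^N$; and $\textsf{DC}$ passes down to $N$ using the measure, as in Solovay's analysis. Thus $N\vDash(3)$ and $N\vDash V=L(\mathbb{R},\mu)$, so Theorem~\ref{characterization of AD} applies inside $N$ and yields $N\vDash\textsf{AD}^+$; hence $N\vDash(2)$ and $\mathrm{Con}(3)\Rightarrow\mathrm{Con}(2)$. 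It therefore remains to prove the lower bound $\mathrm{Con}(2)\Rightarrow\mathrm{Con}(1)$ and the upper bound $\mathrm{Con}(1)\Rightarrow\mathrm{Con}(2)$.

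For the lower bound, work in $V\vDash(2)$; by Corollary~\ref{ADmuunique}, $\mu$ is the unique normal fine measure on $\powerset_{\omega_1}(\mathbb{R})$. Since $V\vDash\textsf{AD}^+$, Woodin's theorem gives that $\Theta$ is Woodin in $\H$, and the full \textsf{HOD}-analysis gives that $\H$ (or its initial segment $\H|\Theta$) is a hod premouse in which every member $\theta_\alpha$ of the Solovay sequence is Woodin. The role of the measure is to pin down the length of that sequence: the existence of $\mu$ forces $\Theta$ to be singular of cofinality $\omega$ and, together with $\Theta>\omega_2$, forces the Solovay sequence of $V$ to be arranged so that $\H$ contains exactly $\omega^2$ Woodin cardinals. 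The underlying picture, from Woodin's analysis of $L(\mathbb{R},\mu)$, is that $j_\mu\colon V\to\mathrm{Ult}(V,\mu)$ behaves like a one-step derived-model embedding whose $\omega$ iterates exhaust the Solovay sequence, each absorbing a block of $\omega$ Woodins of $\H$. Reading this off, $\H\vDash\textsf{ZFC}+$ ``there are $\omega^2$ Woodin cardinals'', so $\mathrm{Con}(2)\Rightarrow\mathrm{Con}(1)$.

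For the upper bound, let $M\vDash\textsf{ZFC}$ contain Woodin cardinals $\langle\delta_\alpha\colon\alpha<\omega^2\rangle$ with supremum $\lambda$ (such $M$ exists by $\mathrm{Con}(1)$). Let $G$ be $\mathrm{Col}(\omega,{<}\lambda)$-generic over $M$, put $\mathbb{R}^*=\bigcup_{\eta<\lambda}\mathbb{R}^{M[G\restriction\eta]}$, and let $D=L(\mathbb{R}^*,\mathrm{Hom}^*)$ be the derived model of $M$ at $\lambda$, so $D\vDash\textsf{AD}^+$ by the Derived Model Theorem. Split the Woodins into $\omega$ consecutive blocks $B_n$ of order type $\omega$, with suprema $\lambda_n\nearrow\lambda$; this gives an increasing chain of derived models $D_n$ with $\mathbb{R}^{D_n}\nearrow\mathbb{R}^*$, and one defines a measure $\mu$ on $\powerset_{\omega_1}(\mathbb{R}^*)$ inside $D$ by declaring $\mu(A)=1$ iff $A$ contains every sufficiently closed $\sigma\in\powerset_{\omega_1}(\mathbb{R}^*)$, where closure is witnessed block by block over the $D_n$ — the localized analogue of Solovay's construction of the Solovay measure under $\textsf{AD}_\mathbb{R}$. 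One then checks that $L(\mathbb{R}^*,\mu)\vDash\textsf{ZF}+\textsf{DC}+\Theta>\omega_2+$ ``$\mu$ is a normal fine measure on $\powerset_{\omega_1}(\mathbb{R})$'' (with $\textsf{DC}$ again coming from the measure and $\Theta>\omega_2$ because $\delta^1_3>\omega_2$ is computed from the reals alone), so Theorem~\ref{characterization of AD} gives $L(\mathbb{R}^*,\mu)\vDash\textsf{AD}^+$ and hence $L(\mathbb{R}^*,\mu)\vDash(2)$. This closes the cycle.

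I expect the decisive obstacle to lie in the upper bound: verifying that the candidate $\mu$ genuinely is a countably complete $\{0,1\}$-valued normal measure — that its null sets form a proper ideal rather than merely that $\mu$ is a filter — and that $L(\mathbb{R}^*,\mu)$ (rather than $D$ or $L(\mathbb{R}^*)$) is closed enough for normality to be meaningful. This is exactly where all of $\omega^2$ is used essentially: $\omega$ Woodins already deliver $\textsf{AD}^+$ in the derived model but leave the Solovay sequence too short to support such a measure, whereas the $\omega$ blocks of $\omega$ Woodins are what a Solovay-style genericity argument needs, run one block at a time, in order to reflect positive-measure sets down to measure-one sets and to establish normality. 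By comparison the lower bound is more routine once Theorem~\ref{characterization of AD} and the \textsf{HOD}-analysis are in hand; its one delicate point is confirming that $\mu$ constrains the Solovay sequence tightly enough that $\H$ has exactly $\omega^2$ Woodin cardinals and not more.
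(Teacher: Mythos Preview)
The paper's proof is two lines: the equiconsistency of (1) and (2) is quoted as a theorem of Woodin, and the equiconsistency of (2) and (3) then follows from Theorem~\ref{characterization of AD}. Your treatment of $\mathrm{Con}(2)\Leftrightarrow\mathrm{Con}(3)$ matches this---reduce to $V=L(\mathbb{R},\mu)$ and invoke Theorem~\ref{characterization of AD}---so on that half you agree with the paper.

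Where you diverge is in trying to reproduce Woodin's equiconsistency of (1) and (2), which the paper treats entirely as a black box, and your lower-bound sketch contains a concrete error. You write that ``the existence of $\mu$ forces $\Theta$ to be singular of cofinality $\omega$''; this directly contradicts Lemma~\ref{regularity of theta}, which shows $\Theta$ is regular in $L(\mathbb{R},\mu)$. More seriously, your picture of the Solovay sequence is wrong: in $L(\mathbb{R},\mu)\vDash\textsf{AD}^+$ one has $\Theta=\theta_0$ (this is Theorem~\ref{the cmi theorem} for $K(\mathbb{R})$, and for $L(\mathbb{R},\mu)$ itself it is part of Woodin's Theorem~\ref{mu unique}, as invoked in the Remark following Theorem~\ref{the cmi theorem}), so the Solovay sequence has a single entry and the $\H$-as-hod-premouse analysis you describe produces only one Woodin cardinal, not $\omega^2$ of them. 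Extracting $\omega^2$ Woodin cardinals from a model of (2) requires a genuinely different argument, and this is exactly what the paper hands off to the citation rather than reproving. Your upper-bound sketch is in better shape conceptually, but again that direction is not argued in the paper; the expected proof here is simply to quote Woodin for (1)$\Leftrightarrow$(2) and be done.
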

\begin{proof}
The equiconsistency of (1) and (2) is a theorem of Woodin (see \cite{Woodin} for more information). Theorem \ref{characterization of AD} immediately implies the equiconsistency of (2) and (3).
\end{proof}
We would like to thank Hugh Woodin for suggesting this problem to us, his encouragement and insightful discussions on the subject matter. We would also like to thank John Steel and Martin Zeman for their helpful comments at various stages of the project. Part of this result was proved during the author's stay in Singapore in Summer 2011; we would like to thank IMS of NUS for their hospitality.

\section{Basic setup}
\label{basicsetup}
In this section we prove some basic facts about $V$ assuming $V = L(\mathbb{R},\mu) \models \rm{\textsf{ZF} + \textsf{DC} \ +\ }\mu$ is a normal fine measure on $\powerset_{\omega_1}(\mathbb{R})$. First note that we cannot well-order the reals hence full \textsf{AC} fails in this model. Secondly, $\omega_1$ is regular; this follows from \textsf{DC}. Now $\mu$ induces a countably complete nonprincipal ultrafilter on $\omega_1$; hence, $\omega_1$ is a measurable cardinal. \textsf{DC} also implies that cof$(\omega_2) > \omega$. We collect these facts into the following lemma, whose easy proof is left to the reader.
\begin{lemma}
\label{facts}
Suppose $V = L(\mathbb{R},\mu) \models \rm{\textsf{ZF} + \textsf{DC} \ +\ } \mu$ is a normal fine measure on $\powerset_{\omega_1}(\mathbb{R})$. Then
\begin{enumerate}
\item $\omega_1$ is regular and in fact measurable; 
\item cof$(\omega_2) > \omega$;
\item $\rm{\textsf{AC}}$ fails and in fact, there cannot be an $\omega_1$-sequence of distinct reals.
\end{enumerate}
\end{lemma}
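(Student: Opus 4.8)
The plan is to derive each of the three items from the existence of a normal fine measure $\mu$ on $\powerset_{\omega_1}(\mathbb{R})$ together with \textsf{DC}. For item (1), I would first observe that fineness lets us push $\mu$ down to $\omega_1$: the map $\sigma \mapsto \sup(\sigma \cap \omega_1)$ — or more carefully, using a fixed surjection considerations, the map sending $\sigma$ to $\omega_1 \cap \sigma$ viewed as a countable ordinal — is defined $\mu$-a.e.\ and pushes $\mu$ forward to a measure $\nu$ on $\omega_1$. The key point is that $\nu$ is countably complete and nonprincipal: countable completeness is inherited from countable completeness of $\mu$ (which in turn comes from the fact that a countable union of measure-zero sets of the form $\{\sigma : x_n \notin \sigma\}$ cannot be everything, since $\mu$-a.e.\ $\sigma$ contains all the $x_n$), and nonprincipality follows because for each $\alpha < \omega_1$ the set $\{\sigma : \omega_1 \cap \sigma = \alpha\}$ has $\mu$-measure zero (a $\mu$-random $\sigma$ contains a real coding an ordinal above $\alpha$, hence $\omega_1\cap\sigma>\alpha$). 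That $\omega_1$ is regular is immediate from \textsf{DC} (a countable cofinal sequence in $\omega_1$ would give a countable set of reals coding a countable ordinal sequence with supremum $\omega_1$, contradicting $\omega_1$ being the first uncountable ordinal). Hence $\nu$ witnesses that $\omega_1$ is measurable.

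For item (2), suppose toward a contradiction that $\cf(\omega_2) = \omega$, say $\omega_2 = \sup_n \delta_n$ with $\delta_n < \omega_2$. For each $n$ fix (using \textsf{DC} to make the choices simultaneously) a surjection $f_n : \mathbb{R} \twoheadrightarrow \delta_n$; then $\bigsqcup_n f_n$ composed with a pairing gives a surjection $\mathbb{R} \times \omega \twoheadrightarrow \omega_2$, and since $\mathbb{R}\times\omega$ is in bijection with $\mathbb{R}$ this yields a surjection $\mathbb{R}\twoheadrightarrow\omega_2$, contradicting the definition of $\omega_2$. The only subtlety is arranging the sequence $\seq{f_n : n<\omega}$, which is exactly what \textsf{DC} (applied to the nonempty sets of such surjections) provides.

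For item (3), the stronger statement is that there is no injection $\omega_1 \hookrightarrow \mathbb{R}$. Suppose $\seq{x_\alpha : \alpha < \omega_1}$ were such a sequence of distinct reals. By fineness, for each $\alpha$ the set $\{\sigma : x_\alpha \in \sigma\}$ has $\mu$-measure one; by countable completeness of $\mu$ we cannot directly intersect $\omega_1$-many of these, so instead consider the function $F(\sigma) = \{x_\alpha : x_\alpha \in \sigma\}$, which is a subset of $\sigma$, hence $F(\sigma)\subseteq\sigma$ $\mu$-a.e. By normality there is a fixed real $x$ with $x \in F(\sigma)$ $\mu$-a.e., so $x = x_{\alpha}$ for some fixed $\alpha$, and then $\{\sigma : x_{\alpha+1}\in F(\sigma)\}$ has measure one as well while $x_{\alpha+1}\neq x_\alpha$; but a cleaner route is: the sequence $\seq{x_\alpha}$ gives an injection of $\omega_1$ into $\mathbb{R}$, hence $\omega_1 \subseteq \powerset(\omega)$ in a definable way, and then using the pushforward measure $\nu$ on $\omega_1$ from item (1) together with this injection one transfers $\nu$ to a countably complete nonprincipal ultrafilter on a set of reals, contradicting fineness/normality by a diagonal argument — alternatively, and most simply, an $\omega_1$-sequence of distinct reals together with \textsf{DC} contradicts item (1) directly, since a measurable cardinal cannot inject into $\powerset(\omega)$ in a model where that injection would collapse the measure. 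I expect the main obstacle to be stating item (3) cleanly: one must be careful that \emph{without} \textsf{AC} the nonexistence of an $\omega_1$-sequence of distinct reals is genuinely stronger than $\omega_1 \not\leq 2^{\omega}$ in the cardinal sense only in pathological models, so the argument should go through the measure. Concretely, I would finish by noting that an $\omega_1$-sequence $\seq{x_\alpha:\alpha<\omega_1}$ of distinct reals allows one to define, for $\mu$-a.e.\ $\sigma$, the ordinal $g(\sigma) = \sup\{\alpha : x_\alpha \in \sigma\}<\omega_1$; then $g$ pushes $\mu$ to a measure on $\omega_1$ concentrating on a set on which, by normality applied to $F(\sigma)=\{x_\alpha : \alpha < g(\sigma)\}$, some fixed $x_{\alpha_0}$ lies in $F(\sigma)$ a.e., forcing $g(\sigma) > \alpha_0$ a.e.\ for every $\alpha_0$ — i.e.\ $g(\sigma)=\omega_1$ a.e., contradicting $g(\sigma)<\omega_1$. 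This contradiction completes the proof. Since the excerpt itself describes the proof as "easy" and "left to the reader," I would present these three arguments tersely, emphasizing only the two non-obvious points: the pushforward to $\omega_1$ and the use of normality (not mere fineness) in item (3).
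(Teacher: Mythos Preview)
Your argument for item~(1) is correct and matches the paper's sketch exactly: push $\mu$ forward to a countably complete nonprincipal ultrafilter on $\omega_1$, and observe that regularity of $\omega_1$ follows from \textsf{DC}.

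Your argument for item~(2), however, has a genuine error. You construct a surjection $\mathbb{R}\twoheadrightarrow\omega_2$ and claim this ``contradicts the definition of $\omega_2$.'' It does not: such a surjection merely says $\omega_2<\Theta$, which is perfectly consistent (and indeed true once \textsf{AD} is established). The fix is to replace $\mathbb{R}$ by $\omega_1$ throughout. Each $\delta_n<\omega_2$ has cardinality at most $\omega_1$, so by $\textsf{AC}_\omega$ (a consequence of \textsf{DC}) choose surjections $g_n:\omega_1\twoheadrightarrow\delta_n$; pasting gives a surjection $\omega_1\times\omega\twoheadrightarrow\omega_2$, hence $\omega_1\twoheadrightarrow\omega_2$. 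Since a surjection between ordinals yields an injection in the reverse direction, $\omega_2$ would inject into $\omega_1$, a contradiction.

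For item~(3), your aside that ``a measurable cardinal cannot inject into $\powerset(\omega)$'' is the right idea and you should have stopped there: given the injection $\alpha\mapsto x_\alpha$ and the ultrafilter $\nu$ on $\omega_1$ from~(1), for each $n$ pick $\varepsilon_n\in\{0,1\}$ with $\{\alpha:x_\alpha(n)=\varepsilon_n\}\in\nu$; by countable completeness $\{\alpha:x_\alpha=\varepsilon\}\in\nu$, but this set has at most one element, contradicting nonprincipality. Your ``concrete'' finishing argument via normality of $\mu$ does not work. First, your $F(\sigma)=\{x_\alpha:\alpha<g(\sigma)\}$ need not be a subset of $\sigma$, so normality does not apply to it. Second, and more fundamentally, from ``for each $\alpha_0$, $g(\sigma)>\alpha_0$ holds $\mu$-a.e.'' you cannot conclude ``$g(\sigma)=\omega_1$ a.e.'': the measure-one sets depend on $\alpha_0$, and intersecting $\omega_1$-many of them would require $\omega_1$-completeness of $\mu$, which you do not have. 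What that line of reasoning actually establishes is only that the pushforward of $\mu$ under $g$ is a nonprincipal countably complete ultrafilter on $\omega_1$ --- i.e., item~(1) again, not a contradiction.
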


\begin{lemma}
\label{regularity of theta}
$\Theta$ is a regular cardinal.
\end{lemma}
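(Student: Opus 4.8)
The plan is to show that $\cf(\Theta) = \Theta$ by deriving a contradiction from the assumption that some increasing cofinal sequence $\langle \gth_\xi : \xi < \gl \rangle$ in $\Theta$ has length $\gl < \Theta$. The key tool is the normal fine measure $\mu$ together with $\textsf{DC}$. First I would recall the standard coding: for each $\ga < \Theta$ there is a surjection $\R \twoheadrightarrow \ga$, and by $\textsf{DC}$ (or rather by the structure of $L(\R,\mu)$, where every set is ordinal-definable from a real and $\mu$) every ordinal $\ga < \Theta$ is in fact the surjective image of $\R$ in a uniform way. The crucial point is that $\powerset_{\omega_1}(\R)$ is, up to the measure $\mu$, a directed system: any countably many reals are contained $\mu$-almost everywhere in a single $\gs \in \powerset_{\omega_1}(\R)$ (this is fineness plus countable completeness of $\mu$).

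The main step is to average over $\mu$. Suppose toward a contradiction $\gl < \Theta$ and $f: \gl \to \Theta$ is cofinal. Fix a surjection $\gp: \R \to \gl$. For each $\gs \in \powerset_{\omega_1}(\R)$, let $g(\gs) = \sup\{ f(\gp(x)) : x \in \gs \}$; since $\gs$ is countable and $\cf(\Theta) > \omega$ would follow from $\textsf{DC}$ alone (the argument of Lemma 2.1(2) applies verbatim to $\Theta$, as $\textsf{DC}$ kills countable cofinality of any ordinal of uncountable cofinality in the ground model... more carefully: $\Theta$ is a limit of Suslin cardinals or at least cannot have an $\omega$-sequence cofinal in it because that would give a surjection $\R \to \Theta$), we get $g(\gs) < \Theta$. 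Now I claim $g$ is bounded on a $\mu$-measure-one set, which is absurd: the $\mu$-a.e. value, call it $\gth^*$, would bound $\{f(\gp(x)) : x \in \gs\}$ for $\mu$-a.e. $\gs$, and by fineness every real $x$ lies in $\mu$-a.e. $\gs$, so $f(\gp(x)) \leq \gth^*$ for every $x \in \R$, whence $f$ is bounded, contradicting cofinality. To see that $g$ is $\mu$-a.e. bounded: if not, then $\gs \mapsto g(\gs)$ would be, modulo $\mu$, a strictly increasing (in the Rudin–Keisler sense) function, and one pushes this to get a surjection of $\R$ onto $\Theta$ via the ultrapower — more precisely, $[\, g\,]_\mu$ computed in $\mathrm{Ult}(V,\mu)$ together with the surjections $\R \to \gl$ and normality yields, after reflecting through a real witnessing normality, a cofinal map of length $< \Theta$ built from a \emph{single} real, which collapses $\Theta$.

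I expect the main obstacle to be making the last averaging argument precise without circularity: one wants to conclude that $\{g(\gs) : \gs\}$ is bounded below $\Theta$, and the clean way is to observe that $\gs \mapsto g(\gs)$ can be coded by a function from $\powerset_{\omega_1}(\R)$ into the ordinals which, composed with the canonical surjections, would otherwise witness $\cf(\Theta) \leq |\powerset_{\omega_1}(\R)/\mu|$ in a way that contradicts the definition of $\Theta$ — here one uses that for any $\gth < \Theta$, the set of surjections $\R \to \gth$ is nonempty and, crucially, that $\Theta$ is closed under the ultrapower by $\mu$ applied to bounded pieces. An alternative, possibly cleaner route: use that $L(\R,\mu) \vDash \textsf{DC}$ and that $\mu$ is normal to run a Solovay-style argument showing every function $\powerset_{\omega_1}(\R) \to \Theta$ is $\mu$-a.e. bounded by some $\gth < \Theta$ (a "$\Theta$ is $\mu$-regular" statement), from which the regularity of $\Theta$ is immediate. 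The delicate part in either approach is verifying that the ultrapower construction does not itself introduce a surjection from $\R$ onto $\Theta$, i.e., that $\Theta$ is not moved or collapsed; this is where the hypothesis $\Theta > \omega_2$ and the fact that $\mu$ concentrates on \emph{countable} subsets of $\R$ do the real work, since each $\gs$ only "sees" countably many ordinal codes at a time.
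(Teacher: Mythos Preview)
You actually state the key idea in your first paragraph --- that in $L(\mathbb{R},\mu)$ every set is ordinal-definable from a real and $\mu$, so ordinals below $\Theta$ are surjective images of $\mathbb{R}$ ``in a uniform way'' --- but you then abandon it in favor of a measure-theoretic argument that never closes. The paper's proof uses exactly that observation and nothing more: for each $\alpha < \Theta$ there is a \emph{canonical} $\mathrm{OD}(\mu)$ surjection $g_\alpha : \mathbb{R} \to \alpha$ (take the least one; such exist by ``averaging over the reals'', i.e., folding the real parameter into the domain). Given any cofinal $f: \mathbb{R} \to \Theta$, set $g(\langle y_0, y_1\rangle) = g_{f(y_0)}(y_1)$; this is a surjection $\mathbb{R} \twoheadrightarrow \Theta$, a contradiction. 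Fineness and normality of $\mu$ play no role whatsoever --- this is verbatim the argument that $\Theta^{L(\mathbb{R})}$ is regular in $L(\mathbb{R})$, with $\mu$ serving only as the fixed predicate from which the model is built.

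Your alternative route via $g(\sigma) = \sup\{f(\pi(x)) : x \in \sigma\}$ has a genuine gap at precisely the step you yourself flag as the ``main obstacle'': showing that $g$ is $\mu$-a.e.\ bounded below some fixed $\theta^* < \Theta$. Neither of your two sketches is a proof. The first (``push through the ultrapower to get a surjection of $\mathbb{R}$ onto $\Theta$'') is a gesture, not an argument --- you never say what map this is or why it is onto. The second (``every function $\powerset_{\omega_1}(\mathbb{R}) \to \Theta$ is $\mu$-a.e.\ bounded'') is a restatement of what you need, not a derivation of it. To make either precise you would need a uniform way to code ordinals below $\Theta$ by reals, which is exactly the $\mathrm{OD}(\mu)$-surjection trick the paper uses directly; once you have that, the detour through $\mu$ buys nothing.
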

\begin{proof}
Suppose not. Let $f: \mathbb{R} \rightarrow \Theta$ be a cofinal map. Then there is an $x \in \mathbb{R}$ such that $f$ is $\mathrm{OD}(\mu,x)$. For each $\alpha < \Theta$, there is a surjection $g_\alpha: \mathbb{R} \rightarrow \alpha$ such that $g_\alpha$ is $\mathrm{OD}(\mu)$ (we may take $g_\alpha$ to be the least such). We can get such a $g_\alpha$ because we can ``average over the reals." Now define a surjection $g: \mathbb{R} \rightarrow \Theta$ as follows
\begin{equation*}
g(y) = g_{f(y_0)}(y_1) \textrm{ where \ } y = \langle y_0, y_1 \rangle.
\end{equation*}
It's easy to see that $g$ is a surjection. But this is a contradiction.
\end{proof}
\begin{lemma}
\label{omega 1 inaccessible}
$\omega_1$ is inaccessible in any (transitive) inner model of choice containing $\omega_1$.
\end{lemma}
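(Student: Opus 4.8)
The plan is to exploit the measurability of $\omega_1$ (Lemma \ref{facts}) together with the fact that the ultrafilter it carries lives in $V$, and to push this measure into any inner model of choice $M$ with $\omega_1 \in M$. Fix such an $M$ and a cardinal $\kappa < \omega_1$ with $\kappa$ infinite; I must show $(\kappa^{+})^{M} < \omega_1$ and, more, that no $M$-cardinal below $\omega_1$ can be a successor or a limit-of-cardinals witness violating inaccessibility. The key observation is that the nonprincipal countably complete ultrafilter $U$ on $\omega_1$ induced by $\mu$ (restrict $\mu$ via any fixed surjection of $\omega_1$-many reals — or rather use that $\mu$ concentrates on $\sigma$ with $\sigma \cap \omega_1$ of a fixed order type; concretely, push $\mu$ forward under $\sigma \mapsto \sup(\sigma\cap\omega_1)$) restricts to a nonprincipal $M$-ultrafilter $U \cap M$ on $\omega_1$ that is countably complete from the point of view of $V$, hence in particular $\omega_1$-complete as computed in $M$ whenever $M$ has only countably many relevant partitions available — but that is not quite automatic, so the argument must be run differently.

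The cleaner route, and the one I would actually carry out, is the standard Solovay-style argument: first I would show $\omega_1$ is regular in $M$ (immediate, since it is regular in $V$ by Lemma \ref{facts}(1), and regularity is downward absolute to inner models), and then show it is a limit cardinal in $M$ and in fact inaccessible there. For the limit-cardinal part, suppose toward a contradiction that $\omega_1 = (\kappa^{+})^{M}$ for some $\kappa$. Then in $M$ there is a sequence $\langle f_{\alpha} : \alpha < \omega_1\rangle$ with each $f_{\alpha} : \kappa \to \alpha$ a surjection (a canonical collapse); since $M$ has choice, this sequence is an element of $M$ and hence of $V$. Now for each $\xi < \kappa$ the map $\alpha \mapsto f_{\alpha}(\xi)$ is a regressive function on a tail of $\omega_1$; using the $\omega_1$-completeness of the measure on $\omega_1$ (here I invoke that the ultrafilter induced by $\mu$ is normal on $\omega_1$, which follows from normality of $\mu$), each such map is constant on a measure-one set, say with value $h(\xi)$. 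Then $h : \kappa \to \omega_1$ and on the intersection of these $\kappa$-many (by countable completeness this is still measure-one only if $\kappa$ is countable, which it is) measure-one sets we would get $f_{\alpha} = h$ for measure-one-many $\alpha$, forcing $f_{\alpha}$ to have range $h[\kappa]$ independent of $\alpha$ — but $\operatorname{ran}(f_{\alpha}) = \alpha$ is unbounded in $\omega_1$, contradiction. Hence $\omega_1$ is a limit cardinal in $M$.

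For inaccessibility I also need $\omega_1$ to be a strong limit in $M$: if $(2^{\kappa})^{M} \geq \omega_1$ for some $\kappa < \omega_1$, then $M$ has an injection of $\omega_1$ into $\powerset(\kappa)^{M} \subseteq \powerset(\kappa)^{V}$, i.e. an $\omega_1$-sequence of distinct subsets of a countable set, hence (coding) an $\omega_1$-sequence of distinct reals, directly contradicting Lemma \ref{facts}(3). Combining: $\omega_1$ is regular, a limit cardinal, and a strong limit in $M$, i.e. inaccessible in $M$.

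\textbf{Main obstacle.} The delicate point is the completeness bookkeeping in the limit-cardinal step: the ultrafilter induced by $\mu$ on $\omega_1$ is only $\sigma$-complete (countably complete) in $V$, not $\omega_1$-complete, so I cannot literally intersect $\kappa$-many measure-one sets when $\kappa$ is uncountable — but since every $\kappa$ in play here is a \emph{countable} ordinal (we are below $\omega_1$), countable completeness suffices, and this is exactly why the argument works at $\omega_1$ and not higher. I would make sure to phrase the contradiction so that only countably many measure-one sets are intersected, or alternatively replace the "regressive function" argument by the slicker observation that the ultrapower $\operatorname{Ult}(M, U\cap M)$ is well-founded (by countable completeness) and moves $\omega_1$, whose critical point computation then shows $\omega_1$ inaccessible in $M$ by the usual measurable-cardinal reflection; I expect the latter packaging to be what the author uses, and it sidesteps the completeness subtlety entirely provided one checks $U \cap M \in M$ is not required — only that $\operatorname{Ult}(M,U\cap M)$ makes sense externally, which it does.
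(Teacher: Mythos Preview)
Your proof is correct, but it takes a different and somewhat more laborious route than the paper's.

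The paper's argument is a two-line packaging trick: given an inner choice model $N$, form $P = L(N,\mu)$; the author asserts $P$ is again a choice model, and since $\mu \in P$ the induced measure witnesses that $\omega_1$ is measurable in $P$, hence inaccessible there, hence inaccessible in $N\subseteq P$. So the paper never unpacks inaccessibility into ``regular $+$ strong limit''; it simply transports the measurability of $\omega_1$ into a ZFC environment containing $N$ and quotes the ZFC fact that measurables are inaccessible.

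Your approach is more elementary and arguably more robust: you verify regularity (downward absolute) and strong limit directly from Lemma~\ref{facts}(3), the nonexistence of an $\omega_1$-sequence of distinct reals. Note, however, that your measure-based argument for ``$\omega_1$ is a limit cardinal in $M$'' is entirely redundant: once you have shown $(2^\kappa)^M < \omega_1$ for every $\kappa<\omega_1$, Cantor's theorem in $M$ gives $(\kappa^+)^M \leq (2^\kappa)^M < \omega_1$, so $\omega_1$ is already a limit cardinal. You can therefore drop the whole regressive-function paragraph and the attendant worry about normality and countable completeness; the proof then rests solely on Lemma~\ref{facts}(1) and (3) and never touches the measure at all. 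Your closing guess that the author uses the external-ultrapower packaging is close in spirit but not what the paper does: the paper enlarges the model rather than forming an ultrapower.
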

\begin{proof}
This is easy. Let $N$ be such a model. Since $P = L(N,\mu)$ is also a choice model and $\omega_1$ is measurable in $P$, hence $\omega_1$ is inaccessible in $P$. This gives $\omega_1$ is inaccessible in $N$.
\end{proof}
\indent Next, we define two key models that we'll use for our core model induction. Let 
\begin{equation*}
M = \Pi_{\sigma \in \powerset_{\omega_1}(\mathbb{R})} M_\sigma\slash \mu \textrm{ where \ } M_\sigma = \rm{\rm{H\mathrm{OD}}}_{\sigma \cup \{\sigma\}} 
\end{equation*}
and, for a transitive self-wellordered\footnote{This means there is a well-ordering of $a$ in $L_1[a]$.} $a\in H_{\omega_1}$,
\begin{equation*}
H_a = \Pi_{\sigma \in \powerset_{\omega_1}(\mathbb{R})} H_{a,\sigma}\slash \mu \textrm{ where \ } H_{a,\sigma} = \rm{\rm{H\mathrm{OD}}}^{M_\sigma}_{\{a,\sigma\}}. 
\end{equation*}
We note that in the definition of $M_\sigma$ and $H_\sigma$ above, ordinal definability is with respect to the structure $(L(\mathbb{R},\mu),\mu)$.
\begin{lemma}
\label{Los holds}
$\L\acute{o}s$ theorem holds for both of the ultraproducts defined above.
\end{lemma}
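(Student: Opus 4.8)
The plan is to prove \L o\'s's theorem by the usual induction on the complexity of the formula — an argument that is classical except at a single point, where the Axiom of Choice would ordinarily be used and must be replaced by a uniformly definable choice on the factor structures. Let $N_\sigma$ denote either $M_\sigma$ or $H_{a,\sigma}$, and let $N=\prod_{\sigma}N_\sigma/\mu$ be the corresponding ultraproduct, with the usual interpretations $[f]\mathrel{E}[g]\iff\{\sigma : f(\sigma)\in g(\sigma)\}\in\mu$ and $[f]=[g]\iff\{\sigma : f(\sigma)=g(\sigma)\}\in\mu$, functions being taken in $V$. What must be shown is that for every formula $\varphi(v_1,\dots,v_n)$ and all $f_1,\dots,f_n$,
\[
N\models\varphi([f_1],\dots,[f_n])\iff\{\sigma : N_\sigma\models\varphi(f_1(\sigma),\dots,f_n(\sigma))\}\in\mu .
\]

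For atomic $\varphi$ this is the definition of $N$. The clause for $\neg$ uses only that $\mu$ is an ultrafilter, and the clause for $\wedge$ only that $\mu$ is closed under finite intersections; both go through verbatim as in the classical proof and require no form of choice. The one substantial case is $\varphi\equiv\exists x\,\psi(x,\vec v)$. Suppose $B:=\{\sigma : N_\sigma\models\exists x\,\psi(x,\vec f(\sigma))\}\in\mu$; I want a function in $V$ representing a witness. Each $N_\sigma$ is a $\mathrm{HOD}$-type model ($M_\sigma=\mathrm{HOD}_{\sigma\cup\{\sigma\}}$ as computed over $(L(\mathbb{R},\mu),\mu)$, and $H_{a,\sigma}=\mathrm{HOD}^{M_\sigma}_{\{a,\sigma\}}$), so it carries a canonical well-ordering $<_{N_\sigma}$; moreover, since the map $\sigma\mapsto N_\sigma$ is definable over $(L(\mathbb{R},\mu),\mu)$ from $\mu$ (and, in the second case, from $a$), so is the map $\sigma\mapsto(N_\sigma,<_{N_\sigma})$. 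Define $g(\sigma)$ to be the $<_{N_\sigma}$-least $x$ with $N_\sigma\models\psi(x,\vec f(\sigma))$ for $\sigma\in B$, and $g(\sigma)=\emptyset$ otherwise. Then $g$ is definable from $\mu,\psi,\vec f$ (and $a$), hence a function of $V=L(\mathbb{R},\mu)$ by Replacement; it has $g(\sigma)\in N_\sigma$ for $\mu$-almost all $\sigma$, and $\{\sigma : N_\sigma\models\psi(g(\sigma),\vec f(\sigma))\}\supseteq B\in\mu$. The induction hypothesis for $\psi$ now yields $N\models\psi([g],[\vec f])$, so $N\models\exists x\,\psi(x,[\vec f])$. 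The reverse implication is immediate: given $g$ with $N\models\psi([g],[\vec f])$, the induction hypothesis for $\psi$ places $\{\sigma : N_\sigma\models\psi(g(\sigma),\vec f(\sigma))\}$, and hence also $\{\sigma : N_\sigma\models\exists x\,\psi(x,\vec f(\sigma))\}$, in $\mu$. The case $\forall$ follows by combining $\exists$ with $\neg$, and this closes the induction.

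The main obstacle — and the sole departure from the textbook argument — is precisely this existential step, forced by the failure of $\textsf{AC}$ in $V$: one cannot simply pick an arbitrary witness in each $N_\sigma$ and collect the choices into a function of $V$. What makes the step go through is the uniform definable well-ordering of the factors, and the point that genuinely requires care is checking that these well-orderings really are available, as objects of $V$, uniformly in $\sigma$. For $M_\sigma$ one appeals to the fact that $\mathrm{HOD}_{\sigma\cup\{\sigma\}}$, taken over $(L(\mathbb{R},\mu),\mu)$, comes with a well-ordering definable over $(L(\mathbb{R},\mu),\mu)$ from $\mu$ and the parameters in $\sigma\cup\{\sigma\}$; for $H_{a,\sigma}$ this is exactly where the self-wellorderedness of $a$ is needed, since it is what guarantees that $\mathrm{HOD}^{M_\sigma}_{\{a,\sigma\}}$ is a well-orderable model and that its well-ordering is definable over $M_\sigma$, hence over $(L(\mathbb{R},\mu),\mu)$, from $a$ and $\sigma$. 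Granting this uniformity, \L o\'s's theorem holds for both ultraproducts.
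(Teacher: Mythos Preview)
Your argument is fine for the ultraproduct $H_a$: the factor $H_{a,\sigma}=\mathrm{HOD}^{M_\sigma}_{\{a,\sigma\}}$ does carry a well-ordering definable from the two parameters $a,\sigma$ (this is simply because $\{a,\sigma\}$ is a finite parameter set; it has nothing to do with $a$ being self-wellordered). The genuine gap is in the case $N_\sigma=M_\sigma$. You claim that $M_\sigma=\mathrm{HOD}_{\sigma\cup\{\sigma\}}$ carries a canonical well-ordering $<_{M_\sigma}$, uniformly in $\sigma$. It does not: the parameter set $\sigma\cup\{\sigma\}$ includes all the individual reals in $\sigma$, and in $L(\mathbb{R},\mu)$ there is no way, definable from $\sigma$ alone, to well-order an arbitrary countable set of reals. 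Indeed the paper only records $M\vDash\textsf{ZF}+\textsf{DC}$, in contrast to $H_a\vDash\textsf{ZFC}$, precisely because the $M_\sigma$ need not satisfy choice. So your function ``$g(\sigma)=$ the $<_{M_\sigma}$-least witness'' is not available, and the existential step collapses.

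The paper's proof handles exactly this point, and the tool you never invoke is the \emph{normality} of $\mu$. Any witness $y\in M_\sigma$ lies in $\mathrm{OD}(\mu,\sigma,x_1,\dots,x_k)$ for finitely many $x_i\in\sigma$; since $\mu$-almost every $\sigma$ is closed under recursive pairing (a consequence of normality plus fineness), one may replace $x_1,\dots,x_k$ by a single $x\in\sigma$. The set-valued map $\sigma\mapsto\{x\in\sigma:\text{some witness lies in }\mathrm{OD}(\mu,x,\sigma)\}$ is then a regressive function with nonempty values $\mu$-a.e., so normality yields a fixed real $x$ that works on a measure-one set. Now a uniform choice \emph{is} possible: let $h(\sigma)$ be the $\mathrm{OD}(\mu,x,\sigma)$-least witness. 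This $h$ is definable from $\mu,x,f,\phi$, hence lies in $V$, and $h(\sigma)\in M_\sigma$. The missing idea is that normality trades the unorderable parameter set $\sigma$ for a single real parameter $x$, after which the canonical $\mathrm{OD}$ well-ordering finishes the job.
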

\begin{proof}
We do this for the first ultraproduct. The proof is by induction on the complexity of formulas. It's enough to show the following. Suppose $\phi(x,y)$ is a formula and $f$ is a function such that $\forall^*_\mu \sigma M_\sigma \vDash \exists x \phi[x, f(\sigma)]$. We show that $M \vDash \exists x \phi[x,[f]_\mu)$. 
\\
\indent Let $g(\sigma) = \{ x \in \sigma \ | \ (\exists y \in \mathrm{OD}(\mu,x)) (M_\sigma \vDash \phi[y,f(\sigma]) \}$. Then $\forall^*_\mu \sigma g(\sigma)$ is a non-empty subset of $\sigma$. By normality of $\mu$, there is a fixed real $x$ such that $\forall^*_\mu \sigma x \in g(\sigma)$. Hence we can define $h(\sigma)$ to be the least $y$ in $\mathrm{OD}(\mu,x)$ such that $M_\sigma \vDash \phi[y,f(\sigma)]$. It's easy to see then that $M \vDash \phi[[h]_\mu, [f]_\mu]$.  
\end{proof}
By Lemma \ref{Los holds}, $M$ and $H_a$ are well-founded so we identify them with their transitive collapse. First note that $M \vDash$ \textsf{ZF} + \textsf{DC} and $H_a \vDash$ \textsf{\textsf{ZFC}}. We then observe that $\Omega = [\lambda \sigma. \omega_1]_\mu$ is measurable in $M$ and in $H_a$. This is because $\omega_1$ is measurable in $M_\sigma$ and $H_{a,\sigma}$ for all $\sigma$. Note also that $\Omega > \Theta$ as $\forall^*_\mu \sigma$, $\Theta^{M_\sigma}$ is countable and $\powerset(\omega_1)^{M_\sigma}$ is countable. The key for this is just an easy fact stated in Lemma \ref{facts}: There are no sequences of $\omega_1$ distinct reals. Hence, by a standard Vopenka argument, for any set of ordinals $A \in M$ of size less than $\Omega$, there is an $H_a$-generic $G_A$ (for a forcing of size smaller than $\Omega$) such that $A \in H_a[G_A] \subseteq M$ and $\Omega$ is also measurable in $H_a[G_A]$. 
\begin{lemma}
\label{M contains all sets of reals}
$\powerset(\mathbb{R}) \subseteq M$.
\end{lemma}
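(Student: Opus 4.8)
The plan is to represent an arbitrary $A \in \powerset(\mathbb{R})$ in the ultraproduct $M$ by the function $\sigma \mapsto A \cap \sigma$, and then to check that the transitive collapse of this representative is $A$ itself. To get started, I would use that $V = L(\mathbb{R},\mu)$, so every set of reals is $\mathrm{OD}(\mu,z)$ for some real $z$ (absorbing finitely many real parameters into one); fix such a $z$ for our $A$. By fineness of $\mu$, the set $X = \{ \sigma \in \powerset_{\omega_1}(\mathbb{R}) \mid z \in \sigma \}$ has $\mu$-measure one.

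The first real point is that $A \cap \sigma \in M_\sigma$ for every $\sigma \in X$. Indeed, $A \cap \sigma$ is definable over $(L(\mathbb{R},\mu),\mu)$ from ordinals together with the parameters $z$ and $\sigma$, both of which lie in $\sigma \cup \{\sigma\}$; and every element of $A \cap \sigma$ is a real belonging to $\sigma$, hence itself lies in $M_\sigma = \mathrm{HOD}_{\sigma\cup\{\sigma\}}$ (being definable from itself as a parameter, with only natural numbers below it). So $A \cap \sigma$ is hereditarily ordinal definable from parameters in $\sigma \cup \{\sigma\}$, i.e.\ $A \cap \sigma \in M_\sigma$. Letting $f$ be the function on $X$ with $f(\sigma) = A \cap \sigma$, we obtain $[f]_\mu \in M$, and by Lemma~\ref{Los holds}, $[f]_\mu$ is, in the sense of $M$, a set of reals.

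The second point is to identify the collapse of $[f]_\mu$ with $A$ after collapsing $M$ transitively. The crucial auxiliary fact is that $\mathbb{R}^M = \mathbb{R}$. For one inclusion, given a real $x$, fineness gives $\forall^*_\mu \sigma\ x \in \sigma \subseteq M_\sigma$, so the constant function with value $x$ represents an element $c_x \in M$, and (using Lemma~\ref{Los holds} and absoluteness of $\omega$) $c_x$ collapses to $x$. For the other inclusion, any real of $M$ is represented by a function $g$ with $\forall^*_\mu \sigma\ g(\sigma) \subseteq \omega$, and therefore collapses to a genuine real of $V$. Hence the collapse restricts to a bijection of $\mathbb{R}^M$ onto $\mathbb{R}$ sending each $c_x$ to $x$. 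Now for a real $x$: by Lemma~\ref{Los holds}, $M \models c_x \in [f]_\mu$ iff $\forall^*_\mu \sigma\ x \in A \cap \sigma$; since $x \in \sigma$ for $\mu$-almost every $\sigma$ while the truth value of ``$x \in A$'' does not depend on $\sigma$, this holds iff $x \in A$. As every element of $[f]_\mu$ is a real of $M$ and so equals some $c_x$, the collapse of $[f]_\mu$ is exactly $\{ x \in \mathbb{R} \mid x \in A \} = A$. Thus $A \in M$; since $A$ was arbitrary, $\powerset(\mathbb{R}) \subseteq M$.

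I expect the only step that needs genuine care to be the computation $\mathbb{R}^M = \mathbb{R}$ --- equivalently, that passing to the ultraproduct neither adds nor deletes reals --- since this is precisely what lets one conclude that the collapse of the witness $[f]_\mu$ is literally $A$ and not merely some set of reals with the same internal reals. This in turn relies only on fineness of $\mu$ and on Lemma~\ref{Los holds}. Everything else is routine bookkeeping with ordinal definability over $(L(\mathbb{R},\mu),\mu)$.
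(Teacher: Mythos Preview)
Your proof is correct and follows the same approach as the paper: represent $A$ by the function $\sigma\mapsto A\cap\sigma$, use $V=L(\mathbb{R},\mu)$ and fineness to get $A\cap\sigma\in M_\sigma$ on a measure-one set, and conclude $A=[\lambda\sigma.\,A\cap\sigma]_\mu\in M$. The paper's proof is terser and simply asserts the last identification, whereas you supply the verification that $\mathbb{R}^M=\mathbb{R}$ and that the collapse of $[f]_\mu$ is literally $A$; this extra care is warranted but not a different idea.
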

\begin{proof}
Let $A \subseteq \mathbb{R}$. Then there is an $x\in \mathbb{R}$ such that $A \in \mathrm{OD}(x,\mu)$. By fineness of $\mu$, $(\forall^*_\mu \sigma)(x\in \sigma)$ and hence $(\forall^*_\mu \sigma)(A \cap \sigma \in \mathrm{OD}(x,\mu,\sigma))$. So we have $(\forall^*_\mu \sigma)(A\cap\sigma \in M_\sigma)$. This gives us that $A = [\lambda \sigma. A\cap \sigma]_\mu \in M$.
\end{proof}
Lemma \ref{M contains all sets of reals} implies that $M$ contains all bounded subsets of $\Theta$.
\section{Framework for the core model induction}
This section is an adaptation of the framework for the core model induction developed in \cite{sargsyan2012non}, which in turns builds on earlier formulations of the core model induction in \cite{CMI}. For a detailed discussion on basic notions such as model operators, mouse operators, $F$-mice, $Lp^F$, $Lp^\Gamma$, condenses well, relativizes well, the envelope of an inductive-like pointclass $\Gamma$ (denoted Env$(\Gamma)$), iterability, quasi-iterability, see \cite{wilson2012contributions}. We briefly recall some of these notions here. % To make these notions precise, fix a strategy $\Sigma$ with branch condensation. Given a $\Sigma$-mouse operator $F$ (this just means $F(x) \lhd Lp^\Sigma(x)$ for $x\in \dom(F)$), we say 
\begin{definition}
\label{model}
Let $\mathcal{L}_0$ be the language of set theory expanded by unary predicate
symbols $\dot{E}, \dot{B}, \dot{S}$, and constant symbols $\dot{l}$ and
$\dot{a}$. Let $a$ be a given transitive set. A \textbf{model with paramemter a}
is an $\mathcal{L}_0$-structure of the form
\begin{center}
$\mathcal{M} = (M; \in, E, B, \mathcal{S}, l, a)$
\end{center}
such that $M$ is a transtive rud-closed set containing $a$, the structure
$\mathcal{M}$ is amenable, $\dot{a}^\M = a$, $\mathcal{S}$ is a sequence of
models $\S_\xi$'s with paramemter $a$ such that letting $M_\xi$ be the universe of
$\mathcal{S}_\xi$
\begin{itemize}
\item $\dot{S}^{\mathcal{S}_\xi} = \S\rest \xi$ for all $\xi\in
\textrm{dom}(\S)$ and $\dot{S}^{\S_\xi}\in M_\xi$ if $\xi$ is a successor
ordinal;
\item $M_\xi = \cup_{\alpha<\xi}M_\alpha$ for all limit $\xi \in
\textrm{dom}(\S)$;
\item if $\textrm{dom}(\S)$ is a limit ordinal then $M = \cup_{\alpha\in
\textrm{dom}(\S)} M_\alpha$ and $l=0$, and
\item if $\textrm{dom}(\S)$ is a successor ordinal, then $\textrm{dom}(\S) =
l$.
\end{itemize}
\end{definition}
Typically, the predicate $\dot{E}$ codes the top extender of the model;
$\dot{S}$ records the sequence of models being built so far; $\dot{B}$ codes the ``lower part extenders". Next, we write down
some notations regarding the above definition.
\begin{definition}
\label{someNotations}
Let $\M$ be the model with parameter $a$. Then $|\M|$ denotes the universe of
$\M$. We let $l(\M) = dom(\dot{S}^\M)$ denote the \textbf{length of} $\M$ and
set $\M|\xi = 	\dot{S}^{\M_\xi}$ for all $\xi <l(\M)$. We set $\M|l(\M) = \M$. If $l(\M) = \xi+1$, then we let $\M^- = \dot{\S}^\M_\xi$. We
also let $\rho(\M)\leq l(\M)$ be the least such that there is some $A\subseteq
M$ definable (from parameters in $M$) over $\M$ such that $A\cap
|\M|\rho(\M)|\notin M$.
\end{definition}
\begin{definition}
\label{ModelOperator}
Let $\nu$ be an uncountable cardinal and $a\in H_\nu$ be transitive. A \textbf{model operator over $a$} is a partial function $F: H_\nu \rightarrow H_\nu$ such that to each model $\M$ over $a$, $F(\M)$ is a model over $a$ such that 
\begin{itemize}
\item $\dot{E}^{F(\M)} = \emptyset$;
\item $\dot{S}^{F(\M)} = (\dot{S}^\M)^\smallfrown \langle\M\rangle$; 
\item $F(\M) = Hull^{F(\M)}_{\Sigma_1}(|\M|)$ (here the hull is transitively collapsed);
\item if $x\in |F(\M)|$ and $y\in |M|\rho(\M)|$ then $x\cap y \in |\M|$.
\end{itemize} 
\end{definition}
For a transitive set $a$, we let $Lp(a)$ be the union of $\N$ such that $\N$ is a sound premouse over $a$, $\rho_{\omega} = a$, and for all $\pi: \bar{\N}\rightarrow \N$ such that $\bar{\N}$ is countable transitive, $\pi$ is elementary, then $\bar{\N}$ is $(\omega,\omega_1+1)$-iterable, that is $\bar{\N}$ has an iteration strategy (in fact a unique one) that acts on $\omega$-maximal, normal iteration trees of length at most $\omega_1$ on $\bar{\N}$. Let $\nu$ be an uncountable cardinal, $a\in H_\nu$ be transtive. We say that $J$ is a \textbf{mouse operator on $H_\nu$ over $a$} if there is an $rQ$-formula $\varphi(v_0,v_1)$ in the language of (Mitchell-Steel) premice such that for all transitive $b\in H_\nu$ such that $a\in b$, $J(b)$ is the least $\N\lhd Lp(b)$ such that $\N\vDash \varphi[b,a]$. We say that $J$ is defined on $H_\nu$ over $a$ or on a cone on $H_\nu$ above $a$.
\begin{definition}
\label{ModelOpInducedByMouseOp}
Let $J$ be a mouse operator on $H_\nu$ over some transitive $a\in H_\nu$. The model operator $F_J$ induced by $J$ is defined as follows:
\begin{enumerate}
\item If $J(\M)$ is amenable to $\M|\rho(\M)$ then 
\begin{center}
$F_J(\M) = (|J(\M)|,\in, \emptyset,B,(\dot{\S}^\M)^\smallfrown \langle \M \rangle, dom(\dot{\S}^\M)+1, a)$,
\end{center}
where $B$ is the extender sequence for $J(\M)$.
\item Otherwise, let $\xi$ be the least ordinal such that $J(\M)|(\xi+1)$ is not amenable to $\M|\rho(\M)$ and $n$ be the smallest such that $\rho_{n+1}(J(\M)|\xi)= \M$. Then letting $(N,\empty, B)$ be the $n^{th}$-reduct of $J(\M)|\xi$, we set
\begin{center}
$F_J(\M) = (N, \in, \emptyset, B, (\dot{\S}^\M)^\smallfrown \langle \M \rangle, dom(\dot{\S}^\M)+1, a)$.
\end{center} 
\end{enumerate}
\end{definition}

We note in the above that $\rho_1(F_J(\M)) = \M$ and $F_J(\M) = Hull_{\Sigma_1}^{F_J(\M)}(|\M|)$, and hence $F_J$ is indeed a model operator over $a$ on $H_\nu$. Sometimes when the domain of $F_J$ (or any model operator $F$) is clear or is not important, we just say that $F_J$ (or $F$) is a model operator over $a$.

\begin{definition}
\label{CondensesWell}
Suppose $F$ is a model operator over some transitive set $a$. $F$ \textbf{condenses well} if the following hold:
\begin{enumerate}
\item If $\M,\M', \N$ are models over $a$ such that $\M'=\M^-$, where $dom(\dot{S}^{\M'})$ is a successor ordinal, and $\pi: \M\rightarrow \F(\N)$ is a $0$-embedding or $\Sigma_2$-embedding then $\M = F(\M')$.
\item If $\P, \M, \M', \N$ are models with $\M' = \M^-$, $\sigma: F(\P)\rightarrow \M$ being a $0$-embedding, and $\pi: \M\rightarrow F(\N)$ being a weak $0$-embedding, then $\M = F(\M')$.
\end{enumerate}
\end{definition}
Our definition is weaker than Definition 2.1.10 \cite{wilson2012contributions} in that we don't require (1) and (2) above to hold in $V^{Col(\omega,\M)}$; our core model induction will not occur in a generic extension of $V$ hence this is all we need out of the notion of ``condenses well". Nevertheless, we can still define the notions of $F$-premice, projecta, standard parameters, solidity and universality of stadard parameters, iteration trees and stategies for $F$-premice, and the $K^{c,F}$-construction the same way as in \cite{wilson2012contributions}.  
\begin{definition}
\label{RelativizeWell}
Suppose $F$ is a model operator on $H_\nu$ over some transitive $a\in H_\nu$. We say that $F$ \textbf{relativizes well} if there is a formula $\varphi$ in the language of set theory such that for every pair $\P, \Q$ of models over $a$ such that $\P\in \Q$ and if $M$ is a transitive model of $\textsf{ZF}^-$ such that $F(\Q)\in M$, then $F(\P)$ is the unique $x\in M$ such that $M \vDash \varphi[x,\P,\Q,F(\Q)]$.  
\end{definition}
\begin{definition}
\label{DetSelfGenExt}
Suppose $F$ is a model operator on $H_\nu$ over some transitive $a\in H_\nu$. We say that $F$ \textbf{determines itself on generic extensions} if there is a formula $\varphi$ (in the language of set theory) such that whenever $M\vDash \textsf{ZF}^-$ is transitive and is closed under $F$ and $g\in V$ is generic over $M$, then $M[g]$ is closed under $F$ and $\varphi$ defines $F\rest M[g]$ over $M[g]$ from $F\rest M$.
\end{definition}

Definitions \ref{CondensesWell}, \ref{RelativizeWell}, and \ref{DetSelfGenExt} have obvious analogues for mouse operators. The model operators that we encounter during the core model induction in this paper come from mouse operators that condense well, relativize well, and determine themselves on generic extensions. We list examples of such operators. These operators, for the purpose of this paper, are defined on $H_{\omega_1}$ above some transitive $a\in H_{\omega_1}$.
\begin{enumerate}
\item $F = F_J$ for some mouse operator $J$ defined on $H_{\omega_1}$ over some $a\in H_{\omega_1}$. Some examples of $J$ are the $\M_n^\sharp$ operators, the ``diagonal operator" defined in 4.2 of \cite{CMI}, and the $\mathcal{A}$-mouse operator $J = J_{\mathcal{A}}$ defined in Definition 4.3.9 of \cite{wilson2012contributions}, where $\mathcal{A}=(A_i: i<\omega)$ is a self-justifying-system such that $\mathcal{A}\in \mathrm{OD}_{b, \Sigma, x}^{K(\mathbb{R})|\alpha}$ for some $x\in b$ and $\a$ ends either a weak gap or a strong gap in the sense of \cite{K(R)} and $\mathcal{A}$ seals the gap\footnote{This means that $\mathcal{A}$ is cofinal in $\utilde{Env}(\Gamma)$, where $\Gamma = \Sigma_1^{K(\mathbb{R})|\alpha}$.}.   
\item  For some $H$, $H$ satisfies 1) above, for some $n<\omega$, $F=F_J$, where $J$ is the $x\rightarrow \M^{\#, H}_n(x)$ operator.
\item $H=F_J$, where for some $a\in HC$ and $\M\lhd Lp(a)$, letting $\Lambda$ is $\M$'s unique $(\omega,\omega_1)$-iteration strategy (in fact $(\omega,\omega_1+1)$-strategy by using the measure $\mu$) or $\M$ is a hod premouse (in the sense of \cite{ATHM}) and $\Lambda$ is $\M$'s $(\omega,\omega_1,\omega_1)$-iteration strategy with branch condensation, for some $rQ$-formula $\varphi$, for some $b\in H_{\omega_1}$ coding $a$, for all $x\in H_{\omega_1}$ coding $b$, $J(b)\lhd Lp^\Lambda(b)$ is the least that satisfies $\varphi[x,a]$. Typically, the $H$ we need are those that are definable over $L^{\Lambda}_\alpha(\mathbb{R})$\footnote{$L^\Lambda(\mathbb{R})$ is the smallest $\textsf{ZF}$ model containing the reals, the ordinals, and is closed under $\Lambda$. See \cite{trang2012scales} for a constructive definition of $L^\Lambda(\mathbb{R})$ and also for a proof that the operators $x\mapsto \M^{\sharp,\Lambda}_n(x)$ condense well, relativize well, and determine themselves on generic extensions.} for some $\alpha$. 
\end{enumerate}
We remark that the model operators listed above are what we need to satisfy the hypothesis of Theorem \ref{the cmi theorem} to prove $\textsf{AD}$ holds in $K(\mathbb{R})$. These are what's called cmi-operators in \cite{sargsyan2012non}.

The following lemmata are what we need to propagate ``nice" properties of a model operator $F$ to those of $\M_1^{\sharp,F}$ and to $F_{\M_1^{\sharp,F}}$. These propagations are needed in the core model induction. The proofs of these lemmata are easy and hence will be ommited.
\begin{lemma}
\label{Propagation1}
Suppose $F$ is a model operator defined on $H_\nu$ over $a$. Suppose $F$ condenses well, relativizes well, and determines itself on generic extensions, then so does the mouse operator $\M_1^{\sharp,F}$ and $dom(\M_1^{\sharp,F})=dom(F)$.
\end{lemma}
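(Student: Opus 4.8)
The plan is to verify the three properties — condenses well, relativizes well, determines itself on generic extensions — for the mouse operator $\M_1^{\sharp,F}$ by reducing each of them to the corresponding property of the base operator $F$, exactly as in the analogous propagation arguments of \cite{wilson2012contributions} and \cite{CMI}. The underlying point is that $F$-premice (and in particular $\M_1^{\sharp,F}$, which is the minimal active $F$-premouse with one Woodin cardinal over its base, having a $\sharp$-type top extender) are built by iterating the operator $F$ along the construction, so any ``local'' structural fact about $F$ lifts to a structural fact about $\M_1^{\sharp,F}$.

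First I would fix notation: for a suitable transitive $b$ with $a \in b$, $\M_1^{\sharp,F}(b)$ is the minimal sound $F$-premouse over $b$ which is active, $\rho_\omega = b$, has a Woodin cardinal, and is (appropriately) iterable; its existence on $\dom(F)$ is part of the core model induction hypothesis and the ``$\sharp$'' closure gives $\dom(\M_1^{\sharp,F}) = \dom(F)$ immediately (one gets $\M_1^{\sharp,F}(b)$ for every $b$ in the domain of $F$, using that $F$ itself is total there and that the relevant $K^{c,F}$-construction converges — here one invokes that $\omega_1$ is measurable, Lemma~\ref{facts}, to get the needed background certificates and iterability). For \emph{condenses well}: given an elementary (or weak $0$-, or $\Sigma_2$-) embedding $\pi : \M \to F_{\M_1^{\sharp,F}}(\N)$ as in Definition~\ref{CondensesWell}, one chases the embedding down through the levels of the $\M_1^{\sharp,F}$-premouse structure, using the condensation property of $F$ at each stage where a new $F$-image is taken, together with standard premouse condensation (Dodd–Jensen, solidity of the standard parameter) for the purely extender-algebra / Woodin-witnessing levels; this forces $\M = F_{\M_1^{\sharp,F}}(\M')$. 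For \emph{relativizes well} and \emph{determines itself on generic extensions}: here the formula $\varphi$ witnessing the property for $\M_1^{\sharp,F}$ is obtained by relativizing the corresponding formula for $F$ — in a model $M \models \textsf{ZF}^-$ closed under $F$, the object $\M_1^{\sharp,F}(\P)$ is the unique iterable output of the $K^{c,F}$-style construction, and uniqueness of iteration strategies (which one gets because these mice are below a Woodin, so $\Sigma^1_2$-like absoluteness and the usual comparison argument apply) makes the definition first-order over $M$ from $F \rest M$; passing to a generic extension $M[g]$, one uses that $F$ determines itself on generic extensions to recover $F \rest M[g]$, and then the same construction run inside $M[g]$ produces $\M_1^{\sharp,F} \rest M[g]$, with genericity of the trees handled by the extender algebra.

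The main obstacle I expect is the \emph{condenses well} clause, specifically clause (2) of Definition~\ref{CondensesWell} involving the two-step factoring $\sigma : F_{\M_1^{\sharp,F}}(\P) \to \M$ and $\pi : \M \to F_{\M_1^{\sharp,F}}(\N)$ with only a \emph{weak} $0$-embedding on the second map: one must argue that the composite still sees enough of the fine structure of the top level (the Woodin-cardinal extender sequence and the active $\sharp$-extender) to pin down $\M$ as exactly $F_{\M_1^{\sharp,F}}(\M')$, rather than some proper initial segment or some model with a different extender on top. This is where one leans on the fact that $F$ itself condenses well under weak $0$-embeddings (part of the hypothesis) and on the fact that $\M_1^{\sharp,F}$-premice are \emph{active} of $\sharp$-type, so the critical point of the top extender is recoverable from the $\Sigma_1$-theory, making the weak embedding suffice. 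Since the paper explicitly declares these proofs ``easy and hence will be ommited,'' I would present the above as the argument sketch, citing \cite{wilson2012contributions} for the template, and note that the only new ingredient over the $\textsf{AC}$ setting is the use of Lemma~\ref{facts} to secure background measurability for the $K^{c,F}$-construction — no genuinely new difficulty arises because the core model induction here takes place inside the choice model $H_a$, where the usual machinery is available.
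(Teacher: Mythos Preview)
The paper omits this proof entirely (``easy and hence will be ommited''), so there is no approach to compare against; your sketch of how the three properties propagate --- condensation by applying $F$'s condensation level-by-level together with standard fine-structural condensation for the extender sequence, and relativizes-well / self-determination on generic extensions by recovering $\M_1^{\sharp,F}(\P)$ as the core of a $K^{c,F}$-construction in an appropriate generic extension, with iterability first-order because the relevant $Q$-structures are $F^\sharp$-guided --- is the standard template from \cite{wilson2012contributions} and is adequate for what the paper intends.

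There is, however, a genuine confusion in your handling of the clause $\dom(\M_1^{\sharp,F}) = \dom(F)$ and in your closing paragraph. You treat the domain equality as an existence statement requiring the measurability of $\omega_1$ (Lemma~\ref{facts}) and convergence of a background-certified $K^{c,F}$-construction inside $H_a$. That is the content of Theorem~\ref{PD}, not of this lemma. Lemma~\ref{Propagation1} is stated for an arbitrary $\nu$ and $a$, with no ambient measure $\mu$, no $H_a$, and no square argument in sight; the domain clause here is the soft observation that an $F$-premouse over $b$ only makes sense where $F$ is defined, and conversely, once $\M_1^{\sharp,F}$ exists at one base point (supplied separately by Theorem~\ref{PD} in the induction), the relativizes-well property of $F$ spreads it to every $b \in \dom(F)$ coding that point. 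Your assertion that ``the only new ingredient over the \textsf{AC} setting is the use of Lemma~\ref{facts}'' is therefore misplaced: nothing specific to $L(\mathbb{R},\mu)$ enters this propagation lemma at all. Strip those references out and your sketch is fine.
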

\begin{lemma}
\label{Propagation2}
Suppose $J$ is a mouse operator defined on $H_\nu$ over $a$. Suppose $J$ condenses well, relativizes well, and determines itself on generic extensions, then so does the model operator $F_J$ and $dom(J)= dom(F_J)$.
\end{lemma}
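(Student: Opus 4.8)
The plan is to deduce each of the three properties of $F_J$, together with the domain equality, by translating the corresponding property of $J$ through the map $J\mapsto F_J$ of Definition~\ref{ModelOpInducedByMouseOp}, which is itself given by an absolutely definable recipe. Recall that recipe: $F_J(\M)$ is built from the premouse obtained by applying $J$ to the universe of $\M$, either by repackaging that premouse as a model over $a$ with $\dot{S}^{F_J(\M)}=(\dot{S}^\M)^\frown\langle\M\rangle$ when it is amenable to $\M|\rho(\M)$, or else by first passing to the appropriate $n$-th reduct of an initial segment $J(\M)|\xi$ and then repackaging. The domain equality is immediate from this: $F_J(\M)$ is defined exactly when $J$ is defined on the universe of $\M$, and models over $a$ and transitive sets containing $a$ code one another, so $\dom(F_J)=\dom(J)$. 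For the three properties, the driving observation is that the repackaging map and the operations of taking and inverting $n$-th reducts are first-order definable and survive the manipulations appearing in Definitions~\ref{CondensesWell}, \ref{RelativizeWell}, and~\ref{DetSelfGenExt} --- namely elementary embeddings, passage to transitive $\textsf{ZF}^-$ models, and forcing extensions.

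For ``condenses well'' I would take models $\M,\M',\N$ (and, in clause~(2), also $\P$) over $a$ with $\M'=\M^-$, $\dom(\dot{S}^{\M'})$ a successor ordinal, together with an embedding $\pi\colon\M\to F_J(\N)$ which is a $0$-embedding or a $\Sigma_2$-embedding in clause~(1) and a weak $0$-embedding supplemented by a $0$-embedding $\sigma\colon F_J(\P)\to\M$ in clause~(2). The idea is that $F_J(\N)$ is, up to the definable repackaging/reduct operation, the premouse $J(\N)$, and that such an embedding into $F_J(\N)$ induces a sufficiently elementary embedding of the underlying (reducts of) premice into $J(\N)$; feeding this, together with $\sigma$ in clause~(2), into the hypothesis that $J$ condenses well --- which asserts precisely that such data forces the domain to be $J$ applied to the predecessor model --- gives $\M=F_J(\M')$.

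For ``relativizes well'' and ``determines itself on generic extensions'' I would fix a formula $\psi$ witnessing the relevant property of $J$ and compose it with the repackaging recipe. For ``relativizes well'': if $M\models\textsf{ZF}^-$ is transitive with $F_J(\Q)\in M$ for models $\P\in\Q$ over $a$, then $\Q\in M$, and inside $M$ one recovers from $F_J(\Q)$ the premouse $J(\Q)$ --- or, in the reduct case, the segment $J(\Q)|\xi$ together with the integer $n$ --- by inverting the recipe; applying $\psi$ inside $M$ then defines $J(\P)$ (or the reduct data for $\P$) from $\P$, $\Q$, and the recovered object, and a last repackaging produces $F_J(\P)$, the whole computation being first-order over $M$. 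For the generic-extension clause: closure of $M$ under $F_J$ yields closure under $J$ --- for transitive $b\in M$, $J(b)$ is recoverable from $F_J$ applied to a passive model over $a$ whose universe codes $b$ --- so if $g$ is $M$-generic then $M[g]$ is closed under $J$ with $J\restriction M[g]$ definable from $J\restriction M$ via the witnessing formula for $J$, and repackaging shows $M[g]$ is closed under $F_J$ with $F_J\restriction M[g]$ correspondingly definable from $F_J\restriction M$.

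The one step I expect to need genuine care is clause~(2) of Definition~\ref{ModelOpInducedByMouseOp}, where $J(\M)$ fails to be amenable to $\M|\rho(\M)$ and one must work with a reduct of $J(\M)|\xi$: there one checks that $\xi$ and $n$ --- hence which reduct is taken --- are recoverable in a first-order way from the data at hand, and that elementarity of embeddings is neither created nor destroyed in passing between a premouse and its $n$-th reduct. These are standard facts about reducts and the fine structure of $F$-premice; moreover, since the present notion of ``condenses well'' does not require clauses~(1)--(2) to hold in a collapse extension, there is if anything less to verify than in the general framework. This is why the proof is routine and is omitted in the text.
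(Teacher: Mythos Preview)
Your proposal is correct and matches the intended approach; the paper itself omits the proof entirely, stating only that ``the proofs of these lemmata are easy and hence will be omitted,'' so your sketch in fact supplies more detail than the paper does. Your closing remark that ``this is why the proof is routine and is omitted in the text'' is exactly right.
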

\section{\textsf{AD} in $K(\mathbb{R})$}
\label{OneMoreWoodin}
By the discussion of the last section, to show \textsf{AD} holds in $K(\mathbb{R})$, it is enough to show that if $F$ is a model operator defined on a cone on $H_{\omega_1}$ above some $a\in H_{\omega_1}$ that condenses well, relativizes well, and determines itself on generic extensions, then $\M_1^{\sharp,F}(x)$ exists (and is $(\omega_1,\omega_1)$-iterable) for all $x\in H_{\omega_1}$ such that $a\in x$. By Lemmata \ref{Propagation1} and \ref{Propagation2}, then the mouse operator $\M_1^{\sharp,F}$ and the model operator $F_{\M_1^{\sharp,F}}$ condense well, relativize well, and determine themselves on generic extensions. Hence the same conclusion holds for the corresponding model operator $F_{\M_1^{\sharp,F}}$.
\begin{theorem}
\label{PD}
Suppose $F$ is a nice model operator, where $F$ is nice if it relativizes well, condenses well, and determines itself on generic extensions. Suppose $F$ is defined on the cone on $H_{\omega_1}$ above some $a\in H_{\omega_1}$. Then $\M_1^{\sharp,F}(x)$ exists for all $x\in H_{\omega_1}$ coding $a$. Furthermore, $\M_1^{\sharp,F}(x)$ is $(\omega,\omega_1,\omega_1)$-iterable, hence $(\omega,\Omega,\Omega)$-iterable in $M$.
\end{theorem}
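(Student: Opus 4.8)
The plan is to run a $K^{c,F}$-construction inside the model $M$ (equivalently, inside an $H_a$-like model that computes the right mice) and argue that it reaches $\M_1^{\sharp,F}(x)$, using the measurable cardinal $\Omega$ of $M$ together with the niceness of $F$ to get past the usual obstacles. First I would fix $x \in H_{\omega_1}$ coding $a$ and work in $M$; by Lemma~\ref{M contains all sets of reals} and the remark following it, $M$ contains all bounded subsets of $\Theta$, and $\Omega = [\lambda\sigma.\omega_1]_\mu$ is measurable in $M$ (and in $H_a[G]$ for suitable generic $G$ via the Vopenka argument). Since $F$ condenses well, relativizes well, and determines itself on generic extensions, the $K^{c,F}$-construction makes sense in $M$ and, by the usual comparison/condensation arguments (as in \cite{wilson2012contributions,CMI}), the premice it produces are $F$-premice that are $(\omega,\Omega,\Omega)$-iterable in $M$, the iterability being witnessed using the measure on $\Omega$ to realize the relevant countable hulls.

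The heart of the argument is the dichotomy: either the $K^{c,F}$-construction over $x$ (done below $\Omega$) reaches a level with a Woodin cardinal — in which case taking the $\sharp$ of the first such level and checking soundness and projection gives $\M_1^{\sharp,F}(x)$ — or it does not, in which case $K^F(x)$ exists as a proper class of $M$ (or at least reaches height $\Omega$) and is $F$-closed with no Woodin cardinals. In the second case I would derive a contradiction from the structure of $V = L(\mathbb{R},\mu)$: the existence of the normal fine measure $\mu$, pushed into $M$, gives more than the core model $K^F$ can absorb, in the style of the Woodin-cardinal-from-a-measure-on-$\powerset_{\omega_1}(\mathbb{R})$ arguments. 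Concretely, one runs the standard core model induction step: the failure of $\M_1^{\sharp,F}(x)$ to exist means $K^F$ is $\Omega$-absolutely-definable and one gets an $\mathrm{AD}$-style or covering contradiction against the largeness of $\mathbb{R}$ and the normal fine measure, analogous to the arguments in Chapter~7 of \cite{CMI} and in \cite{sargsyan2012non}.

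Once $\M_1^{\sharp,F}(x)$ is shown to exist in $M$, I would transfer iterability: the construction gives it $(\omega,\Omega,\Omega)$-iterability in $M$, and since $\M_1^{\sharp,F}(x)$ is a countable (in $M$) object coded by a set of reals lying in $V$, a countable-hull argument plus the fact that $\mu$ induces a countably complete ultrafilter on $\omega_1$ (Lemma~\ref{facts}(1)) yields $(\omega,\omega_1,\omega_1)$-iterability in $V$; re-reflecting into $M$ gives the stated $(\omega,\Omega,\Omega)$-iterability there. The uniqueness of the strategy (hence of $\M_1^{\sharp,F}(x)$ itself) follows from comparison as usual.

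The main obstacle I expect is the second horn of the dichotomy — ruling out that $K^F(x)$ exists with no Woodin cardinal. This is where the hypothesis $\Theta > \omega_2$ must really be used (a model with $\Theta = \omega_2$ would be a counterexample), and it requires feeding the normal fine measure $\mu$ into the core model machinery correctly: one must see that $K^F$ computed in $M$ would contradict, e.g., the measurability of $\omega_1$ together with the failure of an $\omega_1$-sequence of distinct reals (Lemma~\ref{facts}(3)), or more precisely that $\mathrm{HOD}$-type computations inside $M$ force a Woodin cardinal into the $K^{c,F}$-construction below $\Omega$. Getting the bookkeeping of the Vopenka generics and the $H_a$-vs-$M$ passage right, so that the $K^c$-construction is actually carried out in a choice model while still seeing all the relevant reals, is the delicate point.
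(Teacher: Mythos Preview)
Your high-level dichotomy is correct and matches the paper: either the $K^{c,F}$-construction over $x$ (run inside a choice model like $H_x$, below the measurable $\Omega$) produces a Woodin and hence $\M_1^{\sharp,F}(x)$, or $K^F(x)$ exists with no Woodin and one derives a contradiction. The iterability transfer at the end is also essentially as in the paper.

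The genuine gap is in the second horn. Your description there (``an $\textsf{AD}$-style or covering contradiction\ldots analogous to Chapter~7 of \cite{CMI}'') does not name the actual mechanism, and the guesses you make (measurability of $\omega_1$ plus no $\omega_1$-sequence of reals; HOD-type computations forcing a Woodin) are not how the contradiction is obtained. The paper's argument has two specific ingredients you are missing. First, one proves a weak-covering fact: with $\kappa=\omega_1^V$, $(\kappa^+)^{K^F} = (\kappa^+)^{H_x}$, via Schindler's maximality-of-the-core-model argument \cite{MaxCM}. The measure on $\kappa$ induced by $\mu$ is used to take an ultrapower $j$ of a suitable $L^{F^\sharp}[A]$; one extracts a superstrong-type extender fragment from $j$, checks it has a weak certificate in the sense of \cite{MaxCM}, and concludes $\kappa$ would be Shelah in $K$ if covering failed. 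Since $K$ is invariant under small forcing, this yields $(\kappa^+)^{K^F}=\omega_2^V$. Second, one now uses the canonical $\square_\kappa$-sequence of $K^F$ and runs Solovay's argument that $\square$ fails above a supercompact: push $\mu$ along a surjection $\pi:\mathbb{R}\twoheadrightarrow\omega_2$ (this is exactly where $\Theta>\omega_2$ enters) to a normal fine measure $\nu$ on $\powerset_{\omega_1}(\omega_2)$, form $j:K\to\mathrm{Ult}(K,\nu)$, and look at the $\square$-sequence at $\gamma=\sup j''\omega_2$ to reach a contradiction on order types.

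So your outline is right but the crucial step is not yet a proof; you need the weak-covering argument from \cite{MaxCM} together with the $\square$-versus-supercompactness contradiction, neither of which is supplied by the references you cite.
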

\begin{proof}
To start off, we may assume there is some real $x\in a$ such that $F$ is OD$_{x}$, hence \begin{center}
$\forall^*_\mu \sigma \ F\cap M_\sigma \in M_\sigma$.
\end{center}
By $\L\acute{o}s$ theorem, $[\sigma\mapsto F\cap M_\sigma]$ defines a unique model operator on $H^M_\Omega$ over $a$ extending $F$ that condenses well, relativizes well, and determines itself on generic extensions. We also call this extension $F$. Now $F=F_J$ for some mouse operator $J$, by the same argument, the $F^\sharp$-operator, where $F^\sharp(x)$ is the model operator corresponds to the first active level of $Lp^{F}(x)$, is nice and is defined on the cone of $H^M_\Omega$ above $a$.
\begin{lemma}
\label{M1 sharp exists}
For each $x \in \mathbb{R}$ coding $a$, $\M_1^{\sharp,F}(x)$ exists.
\end{lemma}
\begin{proof}
This is the key lemma. Suppose not, there is some $x$ such that $\M_1^{\sharp,F}(x)$ doesn't exist. Then in $H_x$\footnote{Technically, we should take a self-wellordered transitive $b$ coding $x$ and consider $H_b$.}, which is closed under $F^\sharp$, the core model $K =_{\textrm{def}} K^F(x)$\footnote{$K^F(x)$ is the core model that comes from the relative-to-$F$ $K^{c,F}$-construction over $x$ as defined in \cite{CMI}.} (built up to $\Omega$) exists and is $(\omega,\Omega+1)$ iterable by the unique $F^\sharp$-guided strategy\footnote{This means that for any normal, $\omega$-maximal tree $\mathcal{T}\in H_x$ of length at most $\Omega$ on $K$, there is a unique cofinal wellfounded branch $b$ such that $\Q(b,\mathcal{T})\trianglelefteq F^\sharp(\mathcal{T})$.}. Let $\kappa = \omega^V_1$. By Lemma \ref{omega 1 inaccessible}, $\kappa$ is inaccessible in $H_x$ and in any $<\Omega$-set generic extension $J$ of $H_x$ such that $J \subseteq M$. By \cite{CMIP}, $K^{H_x} = K^{H_x[G]}$ for any $H_x$-generic $G$ for a poset of size smaller than $\Omega$.
\\
\\
\textbf{Claim.} $(\kappa^+)^K = (\kappa^+)^{H_x}$.
\begin{proof}
The proof follows that of Theorem 3.1 in \cite{MaxCM}. Suppose not. Let $\lambda = (\kappa^+)^K$. Hence $\lambda < (\kappa^+)^{H_x}$. Working in $H_x$, let $N$ be a transitive, $F^\sharp$-closed, power admissible set containing $x$ such that $^\omega N \subseteq N,$ $V_\kappa \cup \mathcal{J}^K_{\lambda+1} \subseteq N$, and card$(N) = \kappa$. We then choose $A\subseteq \kappa$ such that $N \in L^{F^\sharp}[A]$\footnote{Technically, we should write $L^{F^\sharp}_{\Omega}[A]$ since $F^\sharp$ is only defined on $H_\Omega^M$. All the computations in this claim will be below $\Omega$.} and $K^{L^{F^\sharp}[A]}|\lambda = K|\lambda$, $\lambda = (\kappa^+)^{K^{L^{F^\sharp}[A]}}$, and card$(N)^{L^{F^\sharp}[A]} = \kappa$. Such an $A$ exists by Lemma 3.1.1 in \cite{MaxCM} and the fact that $\lambda < (\kappa^+)^{H_x}$.
\\
\indent Now, it's easy to see that the sharp of $L^{F^\sharp}[A]$ exists in $H_x$, and hence $(\kappa^+)^{L^{F^\sharp}[A]} < (\kappa^+)^{H_x}$. By $\textsf{GCH}$ in $L^{F^\sharp}[A]$, card$^{H_x}(\powerset(\kappa)\cap L^{F^\sharp}[A]) = \kappa$. So in $M$, there is an $L^{F^\sharp}[A]$-ultrafilter $U$ over $\kappa$ that is nonprincipal and countably complete (in $M$ and in $V$). This is because such a $U$ exists in $V$ as being induced from $\mu$ and since $U$ can be coded as a subset of $\omega^V_1 = \kappa$, $U \in M$. Let $J$ be a generic extension of $H_x$ (of size smaller than $\Omega$) such that $U \in J$. From now on, we work in $J$. Let 
\begin{equation*}
j: L^{F^\sharp}[A] \rightarrow Ult(L^{F^\sharp}[A],U) \cong L^{F^\sharp}[j(A)]
\end{equation*}
be the ultrapower map. We note that since $U$ comes from $\mu$, $U$ moves $F^\sharp$ to itself; that's why it's justified to write $Ult(L^{F^\sharp}[A],U) \cong L^{F^\sharp}[j(A)]$ in the above. Then crit$(j) = \kappa$, $A = j(A) \cap \kappa \in L^{F^\sharp}[j(A)]$. So $L^{F^\sharp}[A] \subseteq L^{F^\sharp}[j(A)]$. The key point here is that $\powerset(\kappa)\cap K^{L^{F^\sharp}[A]} = \powerset(\kappa)\cap K^{L^{F^\sharp}[j(A)]}$. To see this, first note that the $\subseteq$ direction holds because any $\kappa$-strong mouse (in the sense of \cite{CMIP}) in $L^{F^\sharp}[A]$ is a $\kappa$-strong mouse in $L^{F^\sharp}[j(A)]$ as $\mathbb{R}\cap L^{F^\sharp}[A] = \mathbb{R}\cap L^{F^\sharp}[j(A)]$ and $L^{F^\sharp}[A]$ and $L^{F^\sharp}[j(A)]$ have the same $<\kappa$-strong mice. To see the converse, suppose not. Then there is a sound mouse $\M \triangleleft K^{L^{F^\sharp}[j(A)]}$ such that $\M$ extends $K^{L^{F^\sharp}[A]}|\lambda$ and $\M$ projects to $\kappa$. The iterability of $\M$ is absolute between $J$ and $L^{F^\sharp}[j(A)]$, by the following folklore result
\begin{lemma}
\label{0-absoluteness iterability}
Let $F$ be a nice model operator defined on all of $V$. Assume $\textsf{\textsf{\textsf{ZF}C}} \ + $ ``there is no $F$-closed class model with a Woodin." Let $M$ be a transitive class model closed under $F$ that satisfies $\textsf{\textsf{\textsf{ZFC}}}^- + $ ``there is no class $F$-closed inner model of a Woodin". Futhermore, assume that $\omega_1 \subseteq M$. Let $\mathcal{P} \in M$ be an $F$-premouse with no definable Woodin. Then
\begin{equation*}
\mathcal{P} \textrm{ is a $F$-mouse } \Leftrightarrow M \vDash \mathcal{P}\textrm{ is a $F$-mouse.}
\end{equation*}
\end{lemma}
For a proof of this in the case $F=\textrm{rud}$, see \cite{Woodinaxiom}. The proof of the lemma is just a trivial modification. Again, since we work below $\Omega$ in $M$, we identify $\Omega$ with OR when applying the above lemma. By a theorem of R. Schindler, translated into our context, $K$ is just a stack of $F$-mice above $\omega_2$ (here $\omega_2^J < \kappa$), we have $\M \triangleleft K^J = K$. But $\lambda = (\kappa^+)^K$ and $\M\triangleleft K|\lambda$. Contradiction.   
\\
\indent Now the rest of the proof is just as in that of Theorem 3.1 in \cite{MaxCM}. Let $E_j$ be the superstrong extender derived from $j$. Since card$(N) = \kappa$ and $\lambda < \kappa^+$, a standard argument (due to Kunen) shows that $F, G \in L^{F^\sharp}[j(A)]$ where
\begin{equation*}
F = E_j \cap ([j(\kappa)]^{<\omega}\times K^{L^{F^\sharp}[A]})
\end{equation*}
and,
\begin{equation*}
G = E_j \cap ([j(\kappa)]^{<\omega}\times N).
\end{equation*}
The key is card$(N)^{L^{F^\sharp}[A]} = \kappa$ and card$(K\cap \powerset(\kappa))^{L^{F^\sharp}[A]} = \kappa$. We show $F\in L^{F^\sharp}[j(A)]$. The proof of $G\in L^{F^\sharp}[j(A)]$ is the same. For $a\in [j(\kappa)]^{<\omega}$, let $\langle B_\alpha \ | \ \alpha < \kappa\rangle \in L^{F^\sharp}[A]$ be an enumeration of $\powerset([\kappa]^{|a|})\cap K^{L^{F^\sharp}[A]} = \powerset([\kappa]^{|a|})\cap K^{L^{F^\sharp}[j(A)]}$ and 
\begin{center}
$E_a = \{ B_\alpha \ | \ \alpha < \kappa \wedge a \in j(B_\alpha) \}$.
\end{center}
Then $E_a\in L^{F^\sharp}[j(A)]$ because $\langle j(B_\alpha) \ | \ \alpha < \kappa \rangle \in L^{F^\sharp}[j(A)]$.

Hence $(K^{L^{F^\sharp}[j(A)]},F)$ and $(N,G)$ are elements of $L^{F^\sharp}[j(A)]$. In $L^{F^\sharp}[j(A)]$, for cofinally many $\xi < j(\kappa)$, $F|\xi$ coheres with $K$ and $(N,G)$ is a weak $\mathcal{A}$-certificate for $(K,F\rest\xi)$ (in the sense of \cite{MaxCM}), where
\begin{equation*}
\mathcal{A} = \bigcup_{n<\omega} \powerset([\kappa]^n)^K.
\end{equation*}
By Theorem 2.3 in \cite{MaxCM}, those segments of $F$ are on the extender sequence of $K^{L^{F^\sharp}[j(A)]}$. But then $\kappa$ is Shelah in $K^{L^{F^\sharp}[j(A)]}$, which is a contradiction.  
\end{proof}
The proof of the claim also shows that $(\kappa^+)^K = (\kappa^+)^J$ for any set (of size smaller than $\Omega$) generic extension $J$ of $H_x$. In particular, since any $A \subseteq \omega^V_1 = \kappa$ belongs to a set generic extension of $H_x$ of size smaller than $\Omega$, we immediately get that $(\kappa^+)^K = \omega_2$. This is impossible in the presence of $\mu$.\footnote{The argument we're about to give is based on Solovay's proof that square fails above a supercompact cardinal.} To see this, let $\vec{C} = \langle C_\alpha \ | \ \alpha < \omega_2\rangle$ be the canonical $\square_{\kappa}$-sequence in $K$. The existence of $\vec{C}$ follows from the proof of the existence of square sequences in pure $L[E]$-models in \cite{schimmerling2004characterization}. Working in $V$, let $\nu$ be the measure on $\powerset_{\omega_1}(\omega_2)$ induced by $\mu$ defined as follows. First, fix a surjection $\pi: \mathbb{R} \rightarrow \omega_2$. Then $\pi$ trivially induces a surjection from $\powerset_{\omega_1}(\mathbb{R})$ onto $\powerset_{\omega_1}(\omega_2)$ which we also call $\pi$. Then our measure $\nu$ is defined as
\begin{equation*}
A \in \nu \Leftrightarrow \pi^{-1}[A] \in \mu.
\end{equation*}
Now consider the ultrapower map $j: K \rightarrow Ult(K,\nu) = K^*$ (where the ultrapower uses all functions in $V$). An easy calculation gives us that $j''\omega_2 = [\lambda\sigma.\sigma]_\nu$ and $A \in \nu \Leftrightarrow j''\omega_2 \in j(A)$. So let $\gamma = \textrm{sup}j''\omega_2$ and $\vec{D} = j(\vec{C}) \in K^*$. Note that $(\kappa^+)^{K^*} = \omega_2$ and since $K^* \vDash \textsf{\textsf{ZF}C}$, $\omega_2$ is regular in $K^*$. Also $\gamma < j(\omega_2^V)$. Now consider the set $D_\gamma$. By definition, $D_\gamma$ is an club in $\gamma$ so it has order type at least $\omega_2$. However, let $C = \langle \alpha < \omega_2 \ | \ \textrm{cof}(\alpha) = \omega\rangle$. Then $j(C) = j''C$ is an $\omega$-club in $\gamma$. Hence $E=D_\gamma \cap j(C)$ is an $\omega$-club in $\gamma$. For each $\alpha \in$ lim$E \subseteq E$, $D_\alpha = D_\gamma \cap \alpha$ and $D_\alpha$ has order type strictly less than $\omega_1$ (this is because cof$(\alpha)=\omega$). This implies that every proper initial segment of $D_\gamma$ has order type strictly less than $\omega_1$ which is a contradiction. 
\end{proof}
The lemma shows that the mouse operator $\M_1^{\sharp,F}$ and hence the corresponding model operator $F_{\M_1^{\sharp,F}}$ is defined for all $x\in \mathbb{R}$ coding $a$. Since $F$, $F^\sharp$ relativize well, $\M_1^{\sharp,F}$ and hence $F_{\M_1^{\sharp,F}}$ are defined on all $H_{\omega_1}$ (in $M$ as well as in $V$) above $a$. This implies that $\forall^*_\mu \sigma$, $F_{\M_1^{\sharp,F}}\rest M_\sigma\in M_\sigma$ . By $\L\acute{o}s$, the $F_{\M_1^{\sharp,F}}$-operator is defined on $H^M_{\Omega}$ (above $a$). Also, for each $y\in H_\Omega^M$, $\M_1^{\sharp,F}(y)$ is $(\omega,\Omega,\Omega+1)$-iterable. This completes the proof of the theorem.
%\\
%\indent Now to handle the case $M_n^{\#} \Rightarrow M_{n+1}^{\#}$ (for $n>0$), we run the same argument as above except for a few modifications in the proof of Lemma \ref{M1 sharp exists}. First, we replace $L[A]$ in the proof of Lemma \ref{M1 sharp exists} by $L^{M_n^{\#}}[A]|\Omega$ (this is because the $M_n^{\#}$-operator is a function from $H_\Omega$ to $H_\Omega$). Next, we build a version of $K$, call it $K^{M_n^{\#}}$, that is closed under the $M_{n}^{\#}$-operator; one way to do this is in the $K^c$ construction, at the successor step, we replace the $rud$ operator by the $M_n^{\#}$ operator. Finally, we replace Lemma \ref{0-absoluteness iterability} by the following 
%\begin{lemma}
%\label{n-absoluteness iterability}
%Suppose $\forall x (M_n^{\#} \textrm{ exists})$ and ``there is no inner model with $n+1$ Woodins." Assume $M$ is a model that satisfies $\textsf{\textsf{ZF}C}^- + \forall x(M_n^{\#} \textrm{ exists}) + $  ``there is no inner model with $n+1$ Woodins." Assume also $\omega_1^V \subseteq M$. Let $\mathcal{P} \in M$ be a premouse closed under the $M_n^\#$-operator. Then
%\begin{equation*}
%\mathcal{P} \textrm{ is a mouse } \Leftrightarrow M \vDash \mathcal{P} \textrm{ is a mouse.}
%\end{equation*}
%\end{lemma}
%These modifications allow us to conclude again that $(\kappa^+)^{K^{M_n^\#}} = (\kappa^+)^H$ as in the claim of Lemma \ref{M1 sharp exists}. Then, repeating the argument after the claim, we get a contradiction (the $\square_\kappa$-sequence can still be constructed in $K^{M_n^\#}$ since the $M_n^\#$-operator has condensation).
\end{proof}
\begin{theorem}
\label{the cmi theorem}
$M_0 =_{def} K(\mathbb{R}) \vDash \textsf{\textsf{AD}}^+ + \Theta = \theta_0$.
\end{theorem}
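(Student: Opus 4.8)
The proof is a core model induction, run inside the model $M$ of \rsec{basicsetup}. Recall that $M$ contains all sets of reals, that $\Omega=[\lambda\sigma.\omega_1]_\mu>\Theta$ is measurable in $M$ and in each $H_a$, and that by the Vop\v{e}nka argument any $A\subseteq\mathrm{Ord}$ of $M$-size $<\Omega$ is set-generic over some $H_a$ with $H_a[G]\subseteq M$. The plan is to prove, by induction on ordinals $\alpha$, the statement $\W_\alpha$ asserting: (i) $K(\mathbb{R})|\alpha$, in the $\mathbb{R}$-premouse hierarchy, satisfies $\textsf{AD}^+$; and (ii) every nice model operator $F$ of the kind listed in the previous section that is $\mathrm{OD}$ from a real over $K(\mathbb{R})|\alpha$ is total on a cone in $H_{\omega_1}$, and the induced operators $\M_1^{\sharp,F}$ and $F_{\M_1^{\sharp,F}}$ are total and $(\omega,\omega_1,\omega_1)$-iterable. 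Since $K(\mathbb{R})=\bigcup_\alpha K(\mathbb{R})|\alpha$, establishing $\W_\alpha$ for all $\alpha$ yields $K(\mathbb{R})\vDash\textsf{AD}^+$.

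The inductive step is organized exactly as in \cite{CMI} and \cite{sargsyan2012non}, according to the fine-structural location of $\alpha$ in the scale analysis of $K(\mathbb{R})$ (the gap structure of \cite{K(R)}). At a stage where a Woodin must be crossed inside $K(\mathbb{R})$ — the bottom of a $\Sigma_1$-gap, the end of a weak or strong gap, or a successor step inside a gap — one identifies the appropriate nice operator $F$: a $\M_n^{\sharp,H}$-operator, a $\Lambda$-relativized mouse operator where $\Lambda$ is the strategy of some $\M\lhd Lp(a)$, or the $\mathcal{A}$-mouse operator $J_{\mathcal{A}}$ for a self-justifying system $\mathcal{A}$ sealing the gap. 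In each case $\W_{<\alpha}$ together with the construction of $K(\mathbb{R})$ shows $F$ is $\mathrm{OD}$ from a real over $K(\mathbb{R})|\alpha$, so $\forall^*_\mu\sigma\,F\cap M_\sigma\in M_\sigma$; hence $F$ extends (via $\L\acute{o}s$) to a nice operator on a cone in $H_{\omega_1}$ and on $H^M_\Omega$, and \rthm{PD} applies — using also \rlem{Propagation1} and \rlem{Propagation2} — so that $\M_1^{\sharp,F}$ and $F_{\M_1^{\sharp,F}}$ are total and $(\omega,\Omega,\Omega)$-iterable in $M$, hence iterable in every relevant $H_a[G]\subseteq M$. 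Feeding this mouse (equivalently, one more Woodin over the appropriate base model built over $\mathbb{R}$) into the standard propagation-of-determinacy machinery — Kechris--Woodin transfer through gaps and Neeman's derivation of optimal determinacy from iterable mice with Woodins — yields the next increment of determinacy, i.e.\ $\W_\alpha$. Limit stages are handled by taking unions, using $\textsf{DC}$ to see that a union of an increasing $\leq\omega_1$-chain of determined pointclasses is determined.

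For the clause $\Theta=\theta_0$: this is forced by the definition of $K(\mathbb{R})$. Any $A\in\powerset(\mathbb{R})\cap K(\mathbb{R})$ is contained in (indeed essentially coded by) some countably iterable $\mathbb{R}$-premouse $\M$ with $\rho(\M)=\mathbb{R}$, and such an $\M$ is ordinal-definable from a real over $K(\mathbb{R})$: it is the $<$-least iterable $\mathbb{R}$-premouse realizing its first-order theory, and the relevant fragment of its canonical iteration strategy is itself $\mathrm{OD}$ from a real, just as in the computation of $\mathrm{HOD}$ in $L(\mathbb{R})$. Hence $\powerset(\mathbb{R})^{K(\mathbb{R})}\subseteq\mathrm{OD}^{K(\mathbb{R})}_{\mathbb{R}}$, so $\Theta^{K(\mathbb{R})}=\theta_0^{K(\mathbb{R})}$; with $K(\mathbb{R})\vDash\textsf{AD}^+$ from the induction, this is the theorem. (Equivalently, contrapositively: were $\Theta^{K(\mathbb{R})}>\theta_0$, there would be a set of reals $A$ of Wadge rank $\geq\theta_0$ not $\mathrm{OD}$ from a real, and the $\mathrm{HOD}$ of $L(A,\mathbb{R})$ would exhibit a Woodin in a region where the $\mathbb{R}$-mouse structure of $K(\mathbb{R})$ forbids one.)

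The main obstacle is the bookkeeping and verification inside the inductive step: matching each fine-structural position $\alpha$ to the correct nice operator from the framework section, checking it is $\mathrm{OD}$ from a real over $K(\mathbb{R})|\alpha$ so that $\L\acute{o}s$'s theorem lets us extend it to $H^M_\Omega$ and \rthm{PD} applies, and then invoking the right propagation-of-scales/derived-model result to extract exactly the next level of determinacy. The scale analysis of $K(\mathbb{R})$ — distinguishing weak from strong gaps and producing self-justifying systems that seal them — is the technical heart, but it runs parallel to the $L(\mathbb{R})$ case of \cite{K(R)}; the one genuinely new ingredient replacing the usual ``one more Woodin'' step is \rthm{PD}, whose proof used the measure $\mu$ essentially through the ultrapower model $M$.
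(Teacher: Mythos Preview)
Your outline matches the paper's approach in broad strokes --- the induction is indeed organized along the gap structure of $K(\mathbb{R})$, with \rthm{PD} supplying the ``one more Woodin'' at each step --- but you gloss over the one point the paper singles out as nontrivial. During the climb through $K(\mathbb{R})$ one encounters levels $\M\lhd K(\mathbb{R})$ with $\rho(\M)=\mathbb{R}$ that satisfy $\Theta\neq\theta_0$. At such $\M$ the scales analysis of \cite{ScalesK(R)} and \cite{Scalesendgap} does \emph{not} run parallel to the $L(\mathbb{R})$ case: it simply fails to produce the next new set, so your ``match each fine-structural position to the correct nice operator'' step breaks down there. The paper handles this via \rlem{od levels are enough}: since $\M\models\Theta\neq\theta_0$, the Sargsyan--Steel result (\cite{DMATM}) gives $\powerset(\mathbb{R})^\M\cap (K(\mathbb{R}))^\M\subsetneq\powerset(\mathbb{R})^\M$; one finds a countable $\N\lhd\M|\alpha$ whose strategy $\Lambda$ is not in $\M$, runs a relativized core model induction through $L^\Lambda_\Omega(\mathbb{R})$ (this is where clause 3 of the operator list is actually used), and extracts a $\K\lhd K(\mathbb{R})$ with $\M\lhd\K$ and $\K\models\textsf{AD}^++\Theta=\theta_0$. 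One then skips to $\K$ and resumes the scales analysis. Without this lemma the induction stalls.

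Your direct argument for $\Theta=\theta_0$ in $K(\mathbb{R})$ is fine in spirit (sound $\mathbb{R}$-mice projecting to $\mathbb{R}$ line up and are each ordinal-definable in $K(\mathbb{R})$, so every set of reals there is $\mathrm{OD}$ from a real), and the paper does not spell this out either; but note that the same Sargsyan--Steel result you would cite here is exactly what drives \rlem{od levels are enough}, so the two issues are intertwined rather than independent.
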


The proof of Theorem \ref{the cmi theorem} is very much like the proof of the core model induction theorems in \cite{sargsyanstrength}, \cite{CMI} (see Chapter 7) and \cite{PFA} using the scales analysis developed in \cite{ScalesK(R)} and \cite{Scalesweakgap}. However, there is one point worth going over. 

Suppose we are doing the core model induction to prove \rthm{the cmi theorem}.  During this core model induction, we climb through the levels of $K(\mathbb{R})$ some of which project to $\mathbb{R}$ but do not satisfy that $``\Theta=\theta_0"$. It is then the case that the scales analysis of \cite{ScalesK(R)}, \cite{Scalesendgap} cannot help us in producing the next ``new" set. However, such levels can never be problematic for proving that $\textsf{\textsf{AD}}^+$ holds in $K(\mathbb{R})$. This follows from the following lemma. Again, we remind the reader that the proof of Lemma \ref{od levels are enough} and hence of Theorem \ref{the cmi theorem} makes heavy use of the result proved in Theorem \ref{PD}.

\begin{lemma}\label{od levels are enough} Suppose $\M\lhd K(\mathbb{R})$ is such that $\rho(\M)=\mathbb{R}$ and $\M\models  ``\Theta\not=\theta_{\Sigma}"$. Then there is $\N\lhd K(\mathbb{R})$ such that $\M\lhd \N$, $\N\models ``\textsf{\textsf{AD}}^+ + \Theta=\theta_0"$.
\end{lemma}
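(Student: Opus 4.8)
\emph{Proof proposal.} The plan is to extract from the failure ``$\Theta\ne\theta_\Sigma$'' inside $\M$ a single new iteration strategy $\Lambda$ sitting just above $\theta_\Sigma^{\M}$, and then to run, one layer above $\M$ and now with $\Lambda$ in hand, the very core model induction that underlies Theorem~\ref{the cmi theorem}, with Theorem~\ref{PD} as its engine; the first level of $K(\mathbb{R})$ past $\M$ at which this stabilizes will be the required $\N$.

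First I would isolate $\Lambda$. At the stage of the core model induction at which this lemma is invoked we have $\M\models\textsf{AD}^+$, so it makes sense to take a Wadge-least set of reals $A\in\M$ which is not $\mathrm{OD}^{\M}$ from $\Sigma$ and a real. By the fine-structural analysis of the levels of $K(\mathbb{R})$ that project to $\mathbb{R}$ (as in \cite{ScalesK(R)} and \cite{Scalesendgap}), $A$ first appears at an initial segment $\mathcal{P}\trianglelefteq\M$ with $\rho_\omega(\mathcal{P})=\mathbb{R}$; by $\trianglelefteq$-leastness every proper segment of $\mathcal{P}$ is $\mathrm{OD}^{\M}$ from $\Sigma$ and a real, so $\mathcal{P}$ may be construed as a $\Sigma$-premouse over $\mathbb{R}$, and its unique $(\omega,\omega_1+1)$-iteration strategy $\Lambda$ — the ``$+1$'' obtained from $\mu$ exactly as in item~(3) of the list of cmi-operators above — extends $\Sigma$. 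By that same item~(3), invoking \cite{trang2012scales}, the induced model operator $F_\Lambda$ (presented, as usual, relative to a real coding a countable hull of $\mathcal{P}$ together with $\Lambda$ on hereditarily countable trees) is \emph{nice}: it relativizes well, condenses well, and determines itself on generic extensions.

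Next, apply Theorem~\ref{PD} to $F=F_\Lambda$: $\M_1^{\sharp,\Lambda}(x)$ exists and is $(\omega,\omega_1,\omega_1)$-iterable for every real $x$ coding the base, hence $(\omega,\Omega,\Omega)$-iterable in $M$. Iterating this with Lemmata~\ref{Propagation1} and~\ref{Propagation2}, $\M_n^{\sharp,\Lambda}(x)$ exists for all $n<\omega$ and each $F_{\M_n^{\sharp,\Lambda}}$ is again nice. Feeding these operators into the scales analysis for $\Lambda$-premice of \cite{trang2012scales} (together with \cite{ScalesK(R)} and \cite{Scalesendgap}) — which from this point onward is the core model induction of Theorem~\ref{the cmi theorem} run over $\Lambda$ — one propagates $\textsf{AD}^+$ through the levels of $L^{\Lambda}(\mathbb{R})$. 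Since the strategy information carried by a $\Lambda$-premouse over $\mathbb{R}$ can be re-presented (by induction on the hierarchy of operators, bottoming out at pure $\mathbb{R}$-premice) as ordinary $\mathbb{R}$-premouse information, every such level is an initial segment of $K(\mathbb{R})$; and as $\mathcal{P}\lhd\M\lhd K(\mathbb{R})$, every such level extends $\M$. Let $\N$ be the least level of $L^{\Lambda}(\mathbb{R})$, equivalently of $K(\mathbb{R})$, properly above $\M$ with $\rho(\N)=\mathbb{R}$ and $\N\models\textsf{ZF}+\textsf{DC}+\textsf{AD}^+$. Finally, $\N\models\Theta=\theta_0$: by minimality every set of reals of $\N$ is $\mathrm{OD}^{\N}$ from $\Lambda\rest\mathrm{HC}$ and a real, while the $\mathcal{Q}$-structures supplied by the $\M_n^{\sharp,\Lambda}$, $n<\omega$, make $\Lambda\rest\mathrm{HC}$ itself $\mathrm{OD}^{\N}$ from a real (via the definition ``the cofinal branch selected by the unique $\mathcal{Q}$-structure'', with parameter a real coding a countable hull of $\mathcal{P}$); hence every set of reals of $\N$ is $\mathrm{OD}^{\N}$ from a real, i.e.\ $\Theta^{\N}=\theta_0^{\N}$, as desired.

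The main obstacle is the balance in the last step. On the one hand $\N$ must be \emph{tall} enough — this is exactly what Theorem~\ref{PD} and its finite iterates provide — for $\Lambda\rest\mathrm{HC}$, and hence $\mathcal{P}$ and all the $\M_n^{\sharp,\Lambda}$, to have become $\mathrm{OD}$ from reals; on the other hand $\N$ must be \emph{minimal} enough that no genuinely new, non-$\mathrm{OD}^{\N}$ set of reals has yet appeared. Equivalently, one must verify that $\M$ is just short of being able to define $\Lambda\rest\mathrm{HC}$ (which is precisely why $A$ is non-$\mathrm{OD}^{\M}$ from a real) and that closing off under the operators $F_{\M_n^{\sharp,\Lambda}}$, $n<\omega$, and then passing to the first $\textsf{AD}^+$-level already reaches a model that can. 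This is bookkept exactly as in the gap-structure analysis of \cite{ScalesK(R)} and \cite{Scalesendgap}, relativized to $\Lambda$; the only new ingredient beyond those references is Theorem~\ref{PD}.
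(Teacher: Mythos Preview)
Your overall plan coincides with the paper's: extract from $\M\models\Theta\neq\theta_0$ an iteration strategy $\Lambda$ for some countable mouse that is not already in $\M$, run the core model induction relative to $\Lambda$ (with Theorem~\ref{PD} as the engine) to obtain $L^\Lambda(\mathbb{R})\models\textsf{AD}^+$, and then locate the desired $\N$ above $\M$. Where you diverge from the paper, however, is precisely at the step that does the real work, and there your argument has a gap.

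The problematic step is the claim that ``every such level is an initial segment of $K(\mathbb{R})$'' because ``the strategy information carried by a $\Lambda$-premouse over $\mathbb{R}$ can be re-presented \dots\ as ordinary $\mathbb{R}$-premouse information.'' A level of $L^\Lambda(\mathbb{R})$ is a $\Lambda$-premouse, carrying $\Lambda$ as an additional predicate; it is not literally an $\mathbb{R}$-premouse, and your one-line ``induction on the hierarchy of operators'' does not explain how that predicate is absorbed. The paper does not attempt any such direct translation. Instead, once $L(\Lambda,\mathbb{R})\models\textsf{AD}^+ + V=L(\powerset(\mathbb{R})) + \Theta=\theta_0$, it invokes the Sargsyan--Steel theorem (cited from \cite{DMATM}) that under these hypotheses $\powerset(\mathbb{R})=\powerset(\mathbb{R})\cap K(\mathbb{R})$, and then uses $\Sigma^2_1$-reflection \emph{inside} $L(\Lambda,\mathbb{R})$ to find a level $\K\trianglelefteq (K(\mathbb{R}))^{L(\Lambda,\mathbb{R})}$ with $\rho(\K)=\mathbb{R}$, $\K\models\Theta=\theta_0$, and $\Lambda\in\K$. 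This $\K$ is a genuine $\mathbb{R}$-premouse by construction; iterability of its countable hulls gives $\K\trianglelefteq K(\mathbb{R})$; and $\Lambda\in\K\setminus\M$ forces $\M\trianglelefteq\K$. The Sargsyan--Steel result is doing nontrivial work that your re-presentation remark elides.

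A smaller point: the paper also uses Sargsyan--Steel at the \emph{start}, to pass from $\M\models\Theta\neq\theta_0$ to the existence of some $\M|\alpha\not\trianglelefteq(K(\mathbb{R}))^\M$, and then explicitly chooses a countable hull whose strategy is \emph{not in} $\M$ (not merely not $\mathrm{OD}^\M$ from a real). That is what makes the final comparison $\M\trianglelefteq\K$ go through. Your Wadge-least argument is in the same spirit but you should check it delivers $\Lambda\notin\M$, since that is what you ultimately need.
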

\begin{proof} Since $\M\models ``\Theta\not = \theta_0"$ it follows that $\powerset(\mathbb{R})^\M\cap (K(\mathbb{R}))^\M\not= \powerset(\mathbb{R})^\M$ by a result of G. Sargsyan and J. Steel, see \cite{DMATM}. It then follows that there is some $\a<o(\M)$ such that $\rho(\M|\a)=\mathbb{R}$ but $\M|\a\not \insegeq (K(\mathbb{R}))^\M$. Let $\pi: \N\rightarrow \M|\a$ be such that $\N$ is countable and its iteration strategy is not in $\M$. Let $\Lambda$ be the $(\omega,\omega_1,\omega_1)$-iteration strategy of $\N$. Using the measure $\mu$ we can lift $\Lambda$ to an $(\Omega,\Omega)$-strategy (in $M$). Then a core model induction through $L_{\Omega}^\Lambda(\mathbb{R})$ (using Theorem \ref{PD}) shows that $L^\Lambda(\mathbb{R})\footnote{We cut off the model at $\Omega$ and pretend that $\Omega$ is OR.}\models \textsf{\textsf{AD}}^+$ (this is where we needed clause 3 of the previous section) and so $L(\Lambda,\mathbb{R})\vDash \textsf{AD}^+$. Furthermore, it's easy to see that $L(\Lambda,\mathbb{R})\models ``V=L(\powerset(\mathbb{R})) + \Theta=\theta_0"$. It then follows from the aforementioned result of G. Sargsyan and J. Steel that $L(\Lambda,\mathbb{R})\models \powerset(\mathbb{R})=\powerset(\mathbb{R})\cap K(\mathbb{R})$. Let then $\K\insegeq (K(\mathbb{R}))^{L(\Lambda,\mathbb{R})}$ be such that $\rho(\K)=\mathbb{R}$, $\K\models \Theta=\theta_0$ and $\Lambda\in \K$ (there is such a $\K$ by an easy application of $\Sigma^2_1$ reflection in $L(\Lambda,\mathbb{R})$). Since countable submodels of $\K$ are iterable , we have that $\K\insegeq K(\mathbb{R})$. Also we cannot have that $\K \inseg \M$ as otherwise $\N$ would have a strategy in $\M$. Therefore, $\M\insegeq \K$.
\end{proof}

We can now do the core model induction through the levels of $K(\mathbb{R})$ as follows. If we have reached a gap satisfying $``\Theta=\theta_0"$ then we can use the scales analysis of \cite{ScalesK(R)} and \cite{Scalesendgap} to go beyond. If we have reached a level that satisfies $``\Theta\not=\theta_0"$ then using \rlem{od levels are enough} we can skip through it and go to the least level beyond it that satisfies $``\Theta=\theta_0"$. We leave the rest of the details to the reader. This completes our proof sketch of Theorem \ref{the cmi theorem}.
\\ 
\\
\noindent \textbf{Remark.} $\textsf{\textsf{AD}}^{K(\mathbb{R})}$ is the most amount of determinacy one could hope to prove. This is because if $\mu$ comes from the Solovay measure (derived from winning strategies of real games) in an $\textsf{\textsf{AD}}^+ + \textsf{\textsf{AD}}_{\mathbb{R}} + \rm{\textsf{SMSC}}$ universe, call it $V$ (any $\textsf{\textsf{AD}}_{\mathbb{R}} + V=L(\powerset(\mathbb{R}))$-model below ``$\textsf{\textsf{AD}}_{\mathbb{R}}+\Theta$ is regular" would do here), then $L(\mathbb{R},\mu)^V \cap \powerset(\mathbb{R}) \subseteq K(\mathbb{R})^V$. This is because $\mu$ is $\mathrm{OD}$ hence $\powerset{(\mathbb{R})} \cap L(\mu,\mathbb{R}) \subseteq \powerset_{\theta_0}(\mathbb{R})$. Since $\textsf{AD}^+ +\textsf{S\textsf{MC}}$ gives us that any set of reals of Wadge rank $< \theta_0$ is contained in an $\mathbb{R}$-mouse (by an unpublished result of Sargsyan and Steel but see \cite{DMATM}), we get that $\powerset{(\mathbb{R})} \cap L(\mu,\mathbb{R}) \subseteq K(\mathbb{R})$ (it is conceivable that the inclusion is strict). By Theorem \ref{mu unique}, $L(\mathbb{R},\mu) \vDash \Theta = \theta_0$, which implies $L(\mathbb{R},\mu)\vDash \powerset(\mathbb{R}) \subseteq K(\mathbb{R})$. Putting all of this together, we get $L(\mathbb{R},\mu) \vDash K(\mathbb{R}) = L(\powerset{(\mathbb{R})}) + \textsf{\textsf{AD}}^+$.
\\
\indent The above remark suggests that we should try to show that every set of reals in $V = L(\mathbb{R},\mu)$ is captured by an $\mathbb{R}$-mouse, which will prove Theorem \ref{characterization of AD}. This is accomplished in the next three sections.

\section{\textsf{AD} in $L(\mathbb{R},\mu)$}\label{getBranchCondensation}
First we show $\Theta = \Theta^{K(\mathbb{R})}$. Suppose for contradiction that $\Theta^{K(\mathbb{R})} < \Theta$. We first show that there is a model containing $\mathbb{R}\cup \mathrm{OR}\cup K(\mathbb{R})$ that satisfies $\textsf{AD}^++\Theta>\theta_0$". The argument will closely follow the argument in Chapter 7 of \cite{CMI}. All of our key notions and notations come from there unless specified otherwise. Let $\Theta^* = \Theta^{K(\mathbb{R})}$. Let $\M_\infty$ be $\rm{\rm{H\mathrm{OD}}}^{K(\mathbb{R})}\rest\Theta^*$. Then $\M_\infty = \M_\infty^+\rest\Theta^*$ where $\M_\infty^+$ is the limit of a directed system (the hod limit system) indexed by pairs $(\P,\vec{A})$ where $\P$ is a suitable premouse, $\vec{A}$ is a finite sequence of $\mathrm{OD}$ sets of reals, and $\P$ is strongly $\vec{A}$-quasi-iterable in $K(\mathbb{R})$. For more details on how the direct limit system is defined, the reader should consult Chapter 7 of \cite{CMI}. Let $\Gamma$ be the collection of $\mathrm{OD}^{K(\mathbb{R})}$ sets of reals. For each $\sigma \in \powerset_{\omega_1}(\mathbb{R})$ such that $Lp(\sigma) \vDash \textsf{\textsf{AD}}^+$, let $\M_\infty^\sigma$ and $\Gamma^\sigma$ be defined the same as $\M_\infty$ and $\Gamma$ but in $Lp(\sigma)$. Let $\Theta^\sigma = o(\M_\infty^\sigma)$. By $\textsf{\textsf{AD}}^{K(\mathbb{R})}$ and $\Theta^* < \Theta$, we easily get
\begin{lemma}
\label{uncollapse}
$\forall_{\mu}^{*} \sigma (Lp(\sigma)\vDash \textsf{\textsf{AD}}^+, \ and \ there \ is \ an \ elementary \ map \ \pi_\sigma: (Lp(\sigma), \M_\infty^\sigma,\Gamma^\sigma) \rightarrow (K(\mathbb{R}), \M_\infty,\Gamma).)$
\end{lemma}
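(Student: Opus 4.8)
The plan is to produce, for $\mu$-almost every $\sigma$, an elementary embedding $\pi_\sigma\colon(Lp(\sigma),\M_\infty^\sigma,\Gamma^\sigma)\to(K(\mathbb{R}),\M_\infty,\Gamma)$; once this is available, $Lp(\sigma)\models\textsf{AD}^+$ is automatic, since $\textsf{AD}^+$ is axiomatized by sentences in the language of set theory and $K(\mathbb{R})\models\textsf{AD}^+$ by Theorem~\ref{the cmi theorem}, so every such sentence reflects downward along $\pi_\sigma$. (Strictly, one wants $Lp(\sigma)\models\textsf{AD}^+$ even before $\M_\infty^\sigma$ and $\Gamma^\sigma$ make sense; for the $\sigma$ isolated below this comes from a standard capturing argument, relativizing the scales analysis of \cite{ScalesK(R)} to $\sigma$ and using $K(\mathbb{R})\models\textsf{AD}^+$.)

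First I would fix a $\mu$-measure-one set of ``closed'' $\sigma$. Since $K(\mathbb{R})\models\textsf{AD}^++\Theta=\theta_0$, every element of $K(\mathbb{R})$ --- in particular $\Theta^{*}=\Theta^{K(\mathbb{R})}$, $\M_\infty$, and the pointclass $\Gamma$ --- is $\mathrm{OD}^{K(\mathbb{R})}$; fix a real $z$ with $\mu$ and all this data $\mathrm{OD}(z)$. The key observation is that for $\mu$-a.e.\ $\sigma$, $\sigma$ is closed under every $\mathrm{OD}^{K(\mathbb{R})}_{z}$ partial function from finite tuples of reals to reals. This is the standard normality argument: if, for such a $G$, the set of $\sigma$ containing some $\vec x$ with $G(\vec x)\notin\sigma$ had measure one, then $\sigma\mapsto\{\vec x\in\sigma: G(\vec x)\notin\sigma\}$ would be a $\mu$-almost-everywhere nonempty regressive function, so by normality a fixed tuple $\vec x_0$ lies in it almost everywhere, whence the fixed real $G(\vec x_0)$ lies outside $\sigma$ almost everywhere, contradicting fineness. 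This handles countably many $G$'s at once by $\omega_1$-completeness of $\mu$; and --- this is where the hypothesis $\Theta^{*}<\Theta$ is really used --- since $\Theta^{*}<\Theta$ there is in $V$ a surjection of $\mathbb{R}$ onto $\Gamma$, so the \emph{entire} family of operators $\sigma$ must be closed under (Skolem functions of $(K(\mathbb{R}),\M_\infty,\Gamma)$, $\mathrm{OD}$ codes for members of $\Gamma$, the mouse operators building the $\sigma$-mice up to $Lp$, witnesses to strong $\vec A$-quasi-iterability of suitable premice, the self-justifying systems sealing the gaps of $K(\mathbb{R})$) is coded by a single real-indexed family, and the same regressive-function argument, applied now to pairs $\langle r,\vec x\rangle$, yields $\mu$-a.e.\ $\sigma$ closed under all of them. (We cannot short-cut this by appealing to a general ultraproduct theorem for $\prod_\sigma V/\mu$, which may fail --- compare the care needed in Lemma~\ref{Los holds}.)

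Next I would fix such a $\sigma$ and build $\pi_\sigma$. Such a $\sigma$ captures $\Gamma$ over $\sigma$ --- for $A\in\Gamma$, $x\in\sigma$, $A\cap\sigma$ is captured by a countably iterable $\sigma$-mouse --- so $\Gamma^\sigma=\{A\cap\sigma: A\in\Gamma\}$ and restriction identifies $\Gamma$ with $\Gamma^\sigma$. The embedding is then obtained exactly as in Chapter~7 of \cite{CMI}, with $\sigma$ in place of $\mathbb{R}$: each suitable $\sigma$-premouse $\P$ carrying a $\vec A$-quasi-iteration ($\vec A\in\Gamma^\sigma$), and so defining a point of $\M_\infty^\sigma$, is matched with the same $\P$ carrying the corresponding $\vec A'$-quasi-iteration ($\vec A'\in\Gamma$, $\vec A=\vec A'\cap\sigma$) defining a point of $\M_\infty$; because $\sigma$ is closed under all the relevant $\mathrm{OD}^{K(\mathbb{R})}$-operators this correspondence commutes with the direct-limit maps, and matched level by level with the stack of $\mathbb{R}$-mice making up $K(\mathbb{R})$ it extends to an elementary $\pi_\sigma\colon(Lp(\sigma),\M_\infty^\sigma,\Gamma^\sigma)\to(K(\mathbb{R}),\M_\infty,\Gamma)$ that is the identity on $\sigma$. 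The iterability of the $\sigma$-premice needed throughout is supplied by Theorem~\ref{PD}.

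The main obstacle is precisely the well-definedness and elementarity of $\pi_\sigma$ in the previous paragraph --- equivalently, that the hod direct limit system of Chapter~7 of \cite{CMI} is \emph{correctly computed} inside $Lp(\sigma)$ for $\mu$-a.e.\ $\sigma$. One must verify that no $\mathrm{OD}^{K(\mathbb{R})}$-definable object used anywhere in that analysis --- the sets in $\Gamma$, but also the $Q$-structures guiding the relevant iteration strategies, the witnesses to strong $\vec A$-quasi-iterability, and the self-justifying systems sealing the weak and strong gaps of $K(\mathbb{R})$ --- escapes $\sigma$ on a set of positive $\mu$-measure; this is exactly what the real-indexed normality bookkeeping of the closure argument above is designed to rule out, but carrying it out rigorously demands a complete inventory of everything the Chapter~7 argument invokes, and that accounting is the delicate part of the proof.
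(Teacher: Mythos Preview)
Your approach can be made to work, but it is far more elaborate than what the paper does, and the difficulty you flag at the end is entirely avoidable. The paper's argument is essentially two lines: since $\Theta^*<\Theta$ there is a surjection of $\mathbb{R}$ onto (the set part of) $K(\mathbb{R})$, so $\mu$ pushes forward to a normal fine measure $\nu$ on $\powerset_{\omega_1}(K(\mathbb{R}))$; then $\forall^*_\nu X\ X\prec K(\mathbb{R})$ by normality, and for such $X$ the transitive collapse has reals exactly $\sigma=X\cap\mathbb{R}$, equals $Lp(\sigma)$ (each fixed $\mathbb{R}$-mouse $\M$ lies in $\nu$-a.e.\ $X$ and collapses to a $\sigma$-mouse), and the uncollapse map is the desired $\pi_\sigma$. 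Elementarity is automatic, and $\M_\infty^\sigma$, $\Gamma^\sigma$ are simply the collapses of $\M_\infty$, $\Gamma$.

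Your closure-under-real-indexed-Skolem-functions argument is exactly this normality fact, rediscovered at the level of $\sigma\subseteq\mathbb{R}$ rather than $X\subseteq K(\mathbb{R})$; but having obtained it you then throw away the free elementarity and attempt to reconstruct $\pi_\sigma$ by hand, matching suitable premice and direct-limit maps level by level. That is where the phantom ``complete inventory of everything Chapter~7 invokes'' obstacle arises. You do not need to know anything about the internal structure of $\M_\infty$ or $\Gamma$ or the self-justifying systems: they are just distinguished elements of $K(\mathbb{R})$, they collapse along with everything else, and elementarity of the uncollapse map transports every first-order statement about them. The paper's route yields the lemma in a paragraph; yours turns a triviality into a project.
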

\begin{proof}
First, it's easily seen that $K(\mathbb{R}) \vDash \textsf{\textsf{AD}}^+$ implies $\forall_{\mu}^{*} \sigma Lp(\sigma)\vDash \textsf{\textsf{AD}}^+$. We also have that letting $\nu$ be the induced measure on $\powerset_{\omega_1}(K(\mathbb{R}))$
\begin{equation*}
\forall^*_\nu X \ X \prec K(\mathbb{R}).
\end{equation*}
The second clause of the lemma follows by transitive collapsing the $X$'s above. Note that $\forall^*_\mu \sigma \ Lp(\sigma)$ is the uncollapse of some countable $X \prec K(\mathbb{R})$ such that $\mathbb{R}^X = \sigma$. This is because if $\M$ is an $\mathbb{R}$-mouse then $\forall^*_\nu X$ $\M \in X$. The $\pi_\sigma$'s are just the uncollapse maps.
\end{proof}
We may as well assume $(\forall^*_\mu \sigma)(Lp(\sigma) = Lp(\sigma)^{K(\mathbb{R})})$ as otherwise, fix a $\sigma$ such that $Lp(\sigma) \vDash \textsf{\textsf{AD}}^+$ and $\M \lhd$ $Lp(\sigma)$ a sound mouse over $\sigma$, $\rho_\omega(\M) = \sigma$ and $\M \notin$ $Lp(\sigma)^{K(\mathbb{R})}$. Let $\Lambda$ be the strategy of $\M$. Then by a core model induction as above, we can show that $L^\Lambda(\mathbb{R}) \vDash \textsf{\textsf{AD}}^+ + \Theta > \theta_0$. Since this is very similar to the proof of $PD$, we only mention a few key points for this induction. First, $\Lambda$ is a $\omega_1+1$ strategy with condensation and $\forall^*_\mu \sigma$ $\Lambda\rest M_\sigma \in M_\sigma$ and $\forall^*_\mu \sigma$ $\Lambda\rest H_\sigma[\M] \in H_\sigma[\M]$. This allows us to lift $\Lambda$ to a $\Omega+1$ strategy in $M$ and construct $K^\Lambda$ up to $\Omega$ inside $\prod_\sigma H_\sigma[\M]$. This is a contradiction to our smallness assumption.
%We can then proceed as in the proof of $\textsf{AD}^{L(\mathbb{R})}$ from now on. We argue that this is a contradiction. By results of Woodin and the fact that we have a model of ``$\textsf{AD}^+ + \Theta>\theta_0$" containing $\mathbb{R}\cup \mathrm{OR}$ (see \cite{DMATM}), we obtain a nontame mouse. This in particular gives us an active sound mouse with $\omega^2$ Woodins. Let $\N$ be such a mouse and $\Sigma$ be the unique strategy for $\N$. We can then use $\Sigma$ to do an $\mathbb{R}$-genericity iteration and construct the tail filter $\mathcal{F}$. One can check that $L(\mathbb{R},\mathcal{F})$ satifies ``$\textsf{ZF}+\textsf{DC}+\textsf{AD} + \mathcal{F}$ is a normal fine measure on $\powerset_{\omega_1}(\mathbb{R})$". The fact that $\N$ is active gives us the sharp of $L(\mathbb{R},\mathcal{F})$. This is a contradiction because we have shown that $L(\mathbb{R},\mu)^\sharp$ exists in a generic extension of $L(\mathbb{R},\mu)$. So from now on whenever we reach a model containing $\mathbb{R}\cup \mathrm{OR}$ that satisfies ``$\textsf{AD}^+ +\Theta>\theta_0$" we will declare that we've reached a contradiction without further elaboration.
\begin{lemma}
\label{full}
$\forall_{\mu}^{*} \sigma$ \ $\M_\infty^\sigma$ is full in $K(\mathbb{R})$ in the sense that $Lp(\M_\infty^\sigma) \vDash \Theta^\sigma$ is Woodin.
\end{lemma}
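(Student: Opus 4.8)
The goal is to show that for $\mu$-almost every $\sigma$, the direct-limit model $\M_\infty^\sigma$ computed inside $Lp(\sigma)$ is full, i.e. $Lp(\M_\infty^\sigma)\models\text{``}\Theta^\sigma\text{ is Woodin''}$. The plan is to pull this back from the corresponding fullness statement for $\M_\infty$ in $K(\mathbb{R})$ via the elementary maps $\pi_\sigma$ furnished by Lemma~\ref{uncollapse}, and then argue that the pullback is genuine fullness and not merely fullness ``in the eyes of $Lp(\sigma)$.'' Concretely, I would first establish the target fact at the top level: $Lp(\M_\infty)\models\text{``}\Theta^*$ is Woodin'', where $\Theta^*=\Theta^{K(\mathbb{R})}=o(\M_\infty)$. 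This is the standard computation from Chapter~7 of \cite{CMI}: $\M_\infty^+$ is the direct limit of the hod/suitable-premouse system, $\Theta^*$ is Woodin in $\M_\infty^+$ by the usual genericity-iteration/reflection argument (every bounded subset of $\Theta^*$ appearing in $Lp(\M_\infty)$ is captured by some suitable $\P$ in the system, and the branches of the relevant extender algebra are dense), and $\M_\infty^+$ is an initial segment of $Lp(\M_\infty)$. Here is where the hypothesis $\Theta^*<\Theta$ (and hence $K(\mathbb{R})\models\textsf{AD}^+$, via Theorem~\ref{the cmi theorem}) is used, exactly as in \cite{CMI}.

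Next I would transfer this downward. By Lemma~\ref{uncollapse}, $\forall^*_\mu\sigma$ there is an elementary $\pi_\sigma:(Lp(\sigma),\M_\infty^\sigma,\Gamma^\sigma)\to(K(\mathbb{R}),\M_\infty,\Gamma)$, and $Lp(\sigma)$ is the transitive collapse of a countable $X\prec K(\mathbb{R})$ with $\mathbb{R}^X=\sigma$. Since ``$\M_\infty$ is full,'' unwound, is a statement about $K(\mathbb{R})$ of the form ``for every $\mathbb{R}$-premouse $\N$ over $\M_\infty$ with $\N\models$ ZFC$^-$ and $\N$ sound projecting to $o(\M_\infty)$, $\N$ does not satisfy that $o(\M_\infty)$ fails to be Woodin,'' and this is (boldface) expressible over $(K(\mathbb{R}),\M_\infty,\Gamma)$ — using that $\Gamma$ is exactly the OD sets and that the relevant mice and their strategies are coded into $\Gamma$ by $\textsf{AD}^+$-mouse-capturing in $K(\mathbb{R})$ (the Sargsyan–Steel result cited in the excerpt) — elementarity of $\pi_\sigma$ gives that $Lp(\sigma)\models\text{``}\M_\infty^\sigma$ is full.'' The remaining point is that fullness in $Lp(\sigma)$ really is fullness: any $\mathbb{R}$-premouse $\N\lhd Lp(\M_\infty^\sigma)$ (in $V$) over $\M_\infty^\sigma$ witnessing non-Woodinness would be countably iterable in $V$, hence its countable hulls are $\omega_1+1$-iterable; since $\sigma$ is countable and $\M_\infty^\sigma$ is a $\sigma$-small object, such an $\N$ would itself be an element of $Lp(\sigma)$ (or a mouse over $\M_\infty^\sigma$ appearing on the $Lp$-stack there), contradicting fullness in $Lp(\sigma)$. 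So the two notions coincide for $\mu$-a.e. $\sigma$, and we are done.

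The main obstacle is the last step: verifying that ``$Lp$ computed inside $Lp(\sigma)$'' agrees with ``$Lp$ computed in $V$'' over $\M_\infty^\sigma$, equivalently that no strategy for a relevant countable mouse over $\M_\infty^\sigma$ escapes $Lp(\sigma)$. This is precisely the kind of issue already confronted in the paragraph following Lemma~\ref{uncollapse}, where the author reduces to the case $\forall^*_\mu\sigma\,(Lp(\sigma)=Lp(\sigma)^{K(\mathbb{R})})$ by running a core model induction through $L^\Lambda(\mathbb{R})$ to derive a contradiction with smallness. I would invoke the same reduction here: if fullness failed on a $\mu$-positive set, pick a witnessing mouse $\N$ over $\M_\infty^\sigma$ with strategy $\Lambda\notin Lp(\sigma)$, lift $\Lambda$ to an $\Omega+1$-strategy using $\mu$, and run the core model induction of Theorem~\ref{PD} through $L^\Lambda(\mathbb{R})$ to get $L^\Lambda(\mathbb{R})\models\textsf{AD}^++\Theta>\theta_0$, contradicting either the smallness assumption on $K(\mathbb{R})$ or the maximality of $K(\mathbb{R})$. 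Everything else — the top-level fullness computation and the pullback via $\pi_\sigma$ — is routine given Chapter~7 of \cite{CMI} and $\textsf{AD}^+$ in $K(\mathbb{R})$.
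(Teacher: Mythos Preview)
Your pullback approach is genuinely different from the paper's and has a gap at the reconciliation step. The paper never invokes elementarity of $\pi_\sigma$ or top-level fullness of $\M_\infty$. Instead, for each relevant $\sigma$ it passes to $Lp_2(\sigma) =_{\mathrm{def}} Lp(Lp(\sigma))$, observes that $Lp_2(\sigma)\vDash \textsf{AD}^+ + \Theta=\theta_0$ (since $\powerset(\sigma)^{Lp_2(\sigma)}=\powerset(\sigma)^{Lp(\sigma)}$) and that $\Theta^{Lp_2(\sigma)}=\Theta^\sigma$. If $\N^\sigma\rhd \M^\sigma_\infty$ were a $Q$-structure in $Lp(\M^\sigma_\infty)$, then $\N^\sigma\in Lp_2(\sigma)$ and is $\mathrm{OD}$ there; but Woodin's theorem gives $\mathrm{HOD}^{Lp_2(\sigma)}\vDash$ ``$\Theta^\sigma$ is Woodin'', contradicting the existence of $\N^\sigma$. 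So the HOD-is-Woodin theorem is applied at the $\sigma$-level, and the jump from $Lp(\sigma)$ to $Lp_2(\sigma)$ is precisely what absorbs the potential $Q$-structure.

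In your argument, elementarity of $\pi_\sigma$ only yields fullness \emph{internal to $Lp(\sigma)$}; to upgrade to fullness in $K(\mathbb{R})$ you must rule out a $Q$-structure $\N\lhd Lp^{K(\mathbb{R})}(\M^\sigma_\infty)$ that $Lp(\sigma)$ fails to see. Your proposed fix --- running a core model induction through $L^\Lambda(\mathbb{R})$ for $\Lambda$ the strategy of $\N$ --- does not produce a contradiction: since $\N$ lies in $K(\mathbb{R})$, its unique strategy $\Lambda$ is already a set of reals in $K(\mathbb{R})$, so $L^\Lambda(\mathbb{R})\subseteq K(\mathbb{R})$ and you do not get $\Theta>\theta_0$ there. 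The reduction after Lemma~\ref{uncollapse} that you invoke concerns mice \emph{over $\sigma$} whose strategies escape $K(\mathbb{R})$; that is a different situation. The missing idea is exactly the paper's: enlarge the $\textsf{AD}^+$ model at the $\sigma$-level to $Lp_2(\sigma)$ so that any $K(\mathbb{R})$-visible $Q$-structure over $\M^\sigma_\infty$ is an $\mathrm{OD}$ element of that model, and then apply Woodin's theorem there.
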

\begin{proof}
First note that $Lp_2(\sigma) =_{def} Lp(Lp(\sigma)) \vDash \textsf{AD}^+ + \Theta = \theta_0$ because $\powerset(\mathbb{R})^{Lp_2(\sigma)}= \powerset(\mathbb{R})^{Lp(\sigma)}$. So suppose $\N^\sigma \rhd \M^\sigma_\infty$ is the $Q$-structure. It's easy to see that $\N^\sigma \in Lp_2(\sigma)$ and is in fact $\mathrm{OD}$ there.
\\
\indent Next we observe that in $Lp_2(\sigma)$, $\Theta = \Theta^\sigma$. By a Theorem of Woodin, we know $\rm{H\mathrm{OD}}^{Lp_2(\sigma)} \vDash \Theta^\sigma$ is Woodin (see Theorem 5.6 of \cite{koellner2010large}). But this is a contradiction to our assumption that $\N^\sigma$ is a $Q$-structure for $\Theta^\sigma$. 
\end{proof}
The last lemma shows that for a typical $\sigma$, $Lp_\omega(\M^\sigma_\infty)$ is suitable in $K(\mathbb{R})$. Let $\M_\infty^{\sigma,+}$ be the hod limit computed in $Lp(\sigma)$. Let $(\Gamma^\sigma)^{<\omega} = \{\vec{A_n} \ | \ n < \omega\}$ and for each $n<\omega$, let $\N_n$ be such that $\N_n$ is strongly $\vec{A_n}$-quasi-iterable in $Lp(\sigma)$ such that $\M_\infty^{\sigma,+}$ is the quasi-limit of the $\N_n$'s in $Lp(\sigma)$. Let $\M_\infty^{\sigma,*}$ be the quasi-limit of the $\N_n$'s in $K(\mathbb{R})$. We'll show that $\pi_\sigma''\Gamma^\sigma$ is cofinal in $\Gamma$, $\M_\infty^{\sigma,+} = \M_\infty^{\sigma,*} = Lp_\omega(\M^\sigma_\infty)$ and hence $\M_\infty^{\sigma,+}$ is strongly $A$-quasi-iterable in $K(\mathbb{R})$ for each $A \in \pi_\sigma''\Gamma^\sigma$. From this we'll get a strategy $\Sigma_\sigma$ for $\M_\infty^{\sigma,+}$ with weak condensation. This proceeds much like the proof in Chapter 7 of \cite{CMI}.
\\
\indent Let $T$ be the tree for a universal $(\Sigma^2_1)^{K(\mathbb{R})}$-set; let $T^* = \prod_\sigma T\slash \mu$ and $T^{**} = \prod_\sigma T^*\slash \mu$. To show $(\forall_{\mu}^{*} \sigma)(\pi_\sigma''\Gamma^\sigma$ is cofinal in $\Gamma$) we first observe that 
\begin{equation*}
(\forall^*_\mu \sigma)(L[T^*,\M_\infty^\sigma]|\Theta^\sigma = \M^\sigma_\infty),
\end{equation*}
that is, $T^*$ does not create $Q$-structures for $\M_\infty^\sigma$. This is because $\M^\sigma_\infty$ is countable, $\omega_1^V$ is inaccessible in any inner model of choice, $L[T^*,\M_\infty^\sigma]|\omega_1^V = L[T,\M_\infty^\sigma]|\omega_1^V$, and $L[T,\M^\sigma_\infty]|\Theta^\sigma = \M^\sigma_\infty$ by Lemma \ref{full}. Next, let $E_\sigma$ be the extender derived from $\pi_\sigma$ with generators in $[\gamma]^{<\omega}$, where $\gamma$ = sup$\pi_\sigma''\Theta^\sigma$. By the above, $E_\sigma$ is a pre-extender over $L[T^*,\M_\infty^\sigma]$.
\begin{lemma}
\label{wellfounded}
$(\forall^*_\sigma \sigma)(Ult(L[T^*,\M_\infty^\sigma],E_\sigma) \ is \ wellfounded)$.
\end{lemma}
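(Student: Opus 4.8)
The plan is to carry out the wellfoundedness argument of Chapter~7 of \cite{CMI} in this relativized setting, in three moves. First, I would pass to the $\mu$-average of the ultrapowers. Since $\mu$ induces a countably complete ultrafilter on $\omega_1$ (\rlem{facts}), it suffices to show that the single ultraproduct $\prod_\sigma Ult(L[T^*,\M_\infty^\sigma],E_\sigma)\big/\mu$ is wellfounded: if, toward a contradiction, $\mu$-almost every $Ult(L[T^*,\M_\infty^\sigma],E_\sigma)$ were illfounded, then $\textsf{DC}$ lets us choose an $\in$-descending $\omega$-chain inside each, and by $\L\acute{o}s$ together with the countable completeness of $\mu$ the $\mu$-classes of these chains form an $\in$-descending $\omega$-chain in the ultraproduct, a contradiction.

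Second, I would interchange the two ultrapowers. A routine diagram chase gives
\begin{equation*}
\textstyle\prod_\sigma Ult(L[T^*,\M_\infty^\sigma],E_\sigma)\big/\mu \;\cong\; Ult\bigl(\prod_\sigma L[T^*,\M_\infty^\sigma]\big/\mu,\; E^{**}\bigr),\qquad E^{**}:=[\sigma\mapsto E_\sigma]_\mu.
\end{equation*}
Since $L[\,\cdot\,]$ commutes with ultrapowers and, by $\L\acute{o}s$, $\prod_\sigma L[T^*,\M_\infty^\sigma]/\mu = L[T^{**},\M_\infty^{**}]$ with $\M_\infty^{**}:=[\sigma\mapsto\M_\infty^\sigma]_\mu$, it now suffices to show that $Ult(L[T^{**},\M_\infty^{**}],E^{**})$ is wellfounded. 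Two facts are in hand here: applying $\L\acute{o}s$ to the no-$Q$-structure observation proved just before the lemma gives $L[T^{**},\M_\infty^{**}]|o(\M_\infty^{**}) = \M_\infty^{**}$; and, since $\mathrm{crit}(\pi_\sigma)<\Theta^\sigma=o(\M_\infty^\sigma)$ for $\mu$-a.e.\ $\sigma$, $\mathrm{crit}(E^{**})=[\sigma\mapsto\mathrm{crit}(\pi_\sigma)]_\mu<o(\M_\infty^{**})$.

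Third, I would realize $Ult(L[T^{**},\M_\infty^{**}],E^{**})$ inside a manifestly wellfounded model. The extender $E^{**}$ is a restriction of the extender derived from $\pi^{**}:=[\sigma\mapsto\pi_\sigma]_\mu$, a fully elementary map of $\prod_\sigma Lp(\sigma)/\mu$ into $Ult(K(\mathbb{R}),\mu)$; the latter is wellfounded by the countable completeness of $\mu$, and hence so is $\prod_\sigma Lp(\sigma)/\mu$. By the two facts just recorded, the measures comprising $E^{**}$, read over $L[T^{**},\M_\infty^{**}]$, are exactly those read over $\M_\infty^{**}\subseteq\prod_\sigma Lp(\sigma)/\mu$, so $j_{E^{**}}\restriction\M_\infty^{**}=\pi^{**}\restriction\M_\infty^{**}$ and $j_{E^{**}}(\M_\infty^{**})$ is wellfounded; thus the factor map $[a,f]_{E^{**}}\mapsto\pi^{**}(f)(a)$ already embeds the submodel of $Ult(L[T^{**},\M_\infty^{**}],E^{**})$ generated by $\M_\infty^{**}$ into $Ult(K(\mathbb{R}),\mu)$. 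The remaining task — which I expect to be the main obstacle — is to see that the rest of $Ult(L[T^{**},\M_\infty^{**}],E^{**})$, namely the part represented by functions in $L[T^{**},\M_\infty^{**}]\setminus\prod_\sigma Lp(\sigma)/\mu$, is wellfounded as well; equivalently, that $j_{E^{**}}(T^{**})$ is wellfounded. This is precisely where the special form $T^*=j_\mu(T)$ is needed: $T^*$ agrees with the genuine $(\Sigma^2_1)^{K(\mathbb{R})}$-tree $T$ below $\omega_1^V$, so in $L[T^{**},\M_\infty^{**}]$ no $Q$-structures are created at the levels relevant to $E^{**}$, and combined with the fullness of $\M_\infty^\sigma$ from \rlem{full} this should force $j_{E^{**}}(T^{**})$ — hence all of $Ult(L[T^{**},\M_\infty^{**}],E^{**})$ — to be wellfounded, the factor map then extending to the whole ultrapower and witnessing wellfoundedness via $Ult(K(\mathbb{R}),\mu)$. (An alternative is to show directly that $E^{**}$, being a $\mu$-ultraproduct of the countable-hull extenders $E_\sigma$, is countably complete in $V$.)
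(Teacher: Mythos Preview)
Your first two moves are correct and align with the paper's proof: reduce to the single ultrapower $Ult(L[T^{**},\M_\infty^{**}],E^{**})$ where $E^{**}=[\sigma\mapsto E_\sigma]_\mu$. (By normality, in fact $\M_\infty^{**}=\M_\infty$, which is how the paper writes it.)

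Where you diverge is in the third move, and the paper's entire argument there is your parenthetical ``alternative''---it is two lines. The identification you stop just short of making is this: by normality of $\mu$, one has $\prod_\sigma Lp(\sigma)/\mu=K(\mathbb{R})$ and your map $\pi^{**}=[\sigma\mapsto\pi_\sigma]_\mu$ is exactly $j_\mu\rest K(\mathbb{R})$. Hence $E^{**}=\prod_\sigma E_\sigma/\mu$ is the extender $E_\mu$ derived from $j_\mu:V\to Ult(V,\mu)$ (with generators below $\sup j_\mu''\Theta^*$). Now $j_\mu$ is defined on all of $V$---in particular on every function in $L[T^{**},\M_\infty]$---and its target is wellfounded because $\mu$ is countably complete and \textsf{DC} holds. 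The realizing map $[a,f]_{E^{**}}\mapsto j_\mu(f)(a)$ therefore embeds $Ult(L[T^{**},\M_\infty],E^{**})$ into $Ult(V,\mu)$, and wellfoundedness is immediate.

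The gap in your main line is precisely the one you flag: your realizing map $\pi^{**}$ lives only on $K(\mathbb{R})$, so it cannot be applied to functions built from $T^{**}\notin K(\mathbb{R})$; and your speculative remarks about the special form of $T^*=j_\mu(T)$ and \rlem{full} do not yield an argument for the wellfoundedness of $j_{E^{**}}(T^{**})$---those facts are used in the paper \emph{after} this lemma (in \rthm{weak condensation}), not to prove it. The fix is not to analyze $T^{**}$ at all, but to recognize, via normality, that $\pi^{**}$ is the restriction of the globally defined $j_\mu$, and then realize through $j_\mu$ instead.
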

\begin{proof}
The statement of the lemma is equivalent to
\begin{equation*}
Ult(L[T^{**},\M_\infty],\Pi_\sigma E_\sigma\slash \mu) \textrm{ is wellfounded}. \ \ (\textasteriskcentered)
\end{equation*}
To see (\textasteriskcentered), note that 
\begin{equation*}
\prod_\sigma E_\sigma = E_\mu
\end{equation*}
where $E_\mu$ is the extender from the ultrapower map $j_\mu$ by $\mu$  (with generators in $[\xi]^{<\omega}$, where $\xi$ = sup$j_\mu''\Theta^*$). This uses normality of $\mu$. We should metion that the equality above should be interpreted as saying: the embedding by $\Pi_\sigma E_\sigma\slash \mu$ agrees with $j_\mu$ on all ordinals (less than $\Theta$).
\\
\indent Since $\mu$ is countably complete and \textsf{DC} holds, we have that $Ult(L[T^{**},\M_\infty],E_\mu)$ is wellfounded. Hence we're done. 
\end{proof}
\begin{theorem}
\label{weak condensation}
\begin{enumerate}
\item $(\forall^*_\mu \sigma)(\pi_\sigma \textrm{ is continuous at } \theta^\sigma)$. Hence \ $cof(\Theta^{K(\mathbb{R})}) = \omega$.
\item If \ $i: \M_\infty^\sigma \rightarrow S$, and $j: S \rightarrow \M_\infty$ are elementary and $\pi_\sigma = j \circ i$ and S is countable in $K(\mathbb{R})$, then $S$ is full in $K(\mathbb{R})$. In fact, if $W$ is the collapse of a hull of $S$ containing $rng(i)$, then $W$ is full in $K(\mathbb{R})$.
\end{enumerate}
\begin{proof}
The keys are Lemma \ref{wellfounded} and the fact that the tree $T^*$, which enforces fullness for $\mathbb{R}$-mice, does not generate $Q$-structures for $\M^\sigma_\infty$. To see (1), suppose not. Fix a typical $\sigma$ for which (1) fails. Let $\gamma$ = sup$\pi_\sigma''\Theta^\sigma < \Theta^*$. Let $E_\sigma$ be the extender derived from $\pi_\sigma$ with generators in $[\gamma]^{<\omega}$ and consider the ultrapower map
\begin{equation*}
\tau: L[T^{*},\M_\infty^\sigma] \rightarrow N_\sigma =_{def} Ult(L[T^{*},\M_\infty^\sigma],E_\sigma).
\end{equation*}
We may as well assume $N_\sigma$ is transitive by Lemma \ref{wellfounded}. We have that $\tau$ is continuous at $\Theta^\sigma$ and $N_\sigma \vDash o(\tau(\M_\infty^\sigma))$ is Woodin. Since $o(\tau(\M_\infty^\sigma)) = \gamma < \Theta^*$, there is a $Q$-structure $\Q$ for $o(\tau(\M_\infty^\sigma))$ in $K(\mathbb{R})$. But $\Q$ can be constructed from $T^{*}$, hence from $\tau(T^{*})$. To see this, suppose $\Q = \Pi_\sigma \Q_\sigma\slash \mu$ and $\gamma = \Pi_\sigma \gamma_\sigma\slash \mu$. Then $\forall^*_\mu \sigma$ $\Q_\sigma$ is the $Q$-structure for $\M^\sigma_\infty|\gamma_\sigma$ and the iterability of $\Q_\sigma$ is certified by $T$. This implies the iterability of $\Q$ is certified by $T^*$. But $\tau(T^{*}) \in N_\sigma$, which does not have $Q$-structures for $\tau(\M_\infty^\sigma)$. Contradiction.
\\
\indent (1) shows then that $\pi_\sigma''\Gamma^\sigma$ is cofinal in $\Gamma$. The proof of (2) is similar. We just prove the first statement of (2). The point is that $i$ can be lifted to an elementary map
\begin{equation*}
i^*: L[T^{*}, \M_\infty^\sigma] \rightarrow L[\overline{T}, S]
\end{equation*}
for some $\overline{T}$ and $j$ can be lifted to 
\begin{equation*}
j^*: L[\overline{T}, S] \rightarrow N_\sigma
\end{equation*}
by the following definition
\begin{equation*}
j^*(i^*(f)(a)) = \tau(f)(j(a))
\end{equation*}
for $f \in L[T^*,\M^\sigma_\infty]$ and $a \in [o(\S)]^{<\omega}$. By the same argument as above, $\overline{T}$ certifies iterability of mice in $K(\mathbb{R})$ and hence enforces fullness for $S$ in $K(\mathbb{R})$. This is what we want.
\end{proof}
\end{theorem}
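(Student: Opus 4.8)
The plan is to prove both clauses by a single lifting argument through $L[T^*,\M_\infty^\sigma]$, exploiting the two complementary features of $T^*$ isolated above: it creates no $Q$-structures for $\M_\infty^\sigma$ (so in $L[T^*,\M_\infty^\sigma]$ the ordinal $\Theta^\sigma$ carries no $Q$-structure over it), yet at each coordinate the universal $(\Sigma^2_1)^{K(\mathbb{R})}$-tree $T$ certifies iterability of the $\mathbb{R}$-mice that actually live in $K(\mathbb{R})$, and this certification is preserved by the ultraproduct and by all further ultrapowers.

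For (1), I argue by contradiction: suppose $\pi_\sigma$ is discontinuous at $\Theta^\sigma$ for $\mu$-almost every $\sigma$, and fix such a typical $\sigma$. Put $\gamma=\sup\pi_\sigma''\Theta^\sigma$; since $\pi_\sigma(\Theta^\sigma)=o(\M_\infty)=\Theta^*$, discontinuity gives $\gamma<\Theta^*$. Let $E_\sigma$ be the $(\textrm{crit}(\pi_\sigma),\gamma)$-extender derived from $\pi_\sigma$ and form $\tau\colon L[T^*,\M_\infty^\sigma]\to N_\sigma=\mathrm{Ult}(L[T^*,\M_\infty^\sigma],E_\sigma)$, which is wellfounded by \rlem{wellfounded}; I identify $N_\sigma$ with its transitive collapse. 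Since $E_\sigma$ is derived from $\pi_\sigma$ and all its generators lie below $\gamma$, one checks, as for iteration maps of suitable premice below a Woodin, that $\tau\restriction\Theta^\sigma=\pi_\sigma\restriction\Theta^\sigma$, that $\tau$ is continuous at $\Theta^\sigma$, and hence that $o(\tau(\M_\infty^\sigma))=\gamma$; by elementarity, $N_\sigma$ sees $\gamma$ as carrying no $Q$-structure. On the other hand $\gamma<\Theta^*=\Theta^{K(\mathbb{R})}$, so there is a genuine $Q$-structure $\Q\lhd K(\mathbb{R})$ for $\tau(\M_\infty^\sigma)$. Writing $\gamma=[\sigma\mapsto\gamma_\sigma]_\mu$ and $\Q=[\sigma\mapsto\Q_\sigma]_\mu$, where $\Q_\sigma$ is the $Q$-structure for $\M_\infty^\sigma|\gamma_\sigma$ in $K(\mathbb{R})$ and its iterability is certified by $T$, it follows that $T^*$ certifies the iterability of $\Q$; hence $\Q$ is constructible from $\tau(T^*)\in N_\sigma$ and so $\Q\in N_\sigma$, contradicting the previous sentence. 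The ``hence'' clause then follows: $\sigma$ is countable in $V$, so $\cf(\Theta^\sigma)=\omega$, and continuity of $\pi_\sigma$ at $\Theta^\sigma$ gives $\Theta^*=\sup\pi_\sigma''\Theta^\sigma$, whence $\cf(\Theta^{K(\mathbb{R})})=\omega$.

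For (2), I run essentially the same argument with the factorization $\pi_\sigma=j\circ i$. I lift $i$ to an elementary map $i^*\colon L[T^*,\M_\infty^\sigma]\to L[\overline{T},S]$ with $\overline{T}=i^*(T^*)$, and lift $j$ to $j^*\colon L[\overline{T},S]\to N_\sigma$ by setting $j^*(i^*(f)(a))=\tau(f)(j(a))$ for $f\in L[T^*,\M_\infty^\sigma]$ and $a\in[o(S)]^{<\omega}$; this is well defined and elementary precisely because $\pi_\sigma=j\circ i$ and $E_\sigma$ is derived from $\pi_\sigma$. Now $\overline{T}$, being the image of $T^*$, still certifies iterability of the $\mathbb{R}$-mice of $K(\mathbb{R})$ and hence enforces fullness for $S$: a $Q$-structure for $o(S)$ in $K(\mathbb{R})$ would be certified by $\overline{T}$, hence would land in $N_\sigma$ via $j^*$, contradicting, exactly as in (1), that $N_\sigma$ sees $o(\tau(\M_\infty^\sigma))$ as carrying no $Q$-structure. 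Thus $Lp(S)\vDash o(S)$ is Woodin, i.e.\ $S$ is full in $K(\mathbb{R})$. For the assertion about $W$, I factor $i$ further through the transitive collapse of the chosen hull of $S$ and repeat the argument with the correspondingly lifted tree.

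The step I expect to be the main obstacle is the bookkeeping around $T^*$ and its images $\tau(T^*)$ and $\overline{T}$: pinning down exactly why the universal $\Sigma^2_1$-tree certifies iterability of the relevant $Q$-structures at each coordinate, why this property passes through $\prod_\sigma(\cdot)/\mu$ and through the ultrapowers $\tau$, $i^*$, $j^*$, and reconciling ``$T^*$ creates no $Q$-structure below $\Theta^\sigma$'' with ``$\tau(T^*)$ (resp.\ $\overline{T}$) does certify the $Q$-structures that $K(\mathbb{R})$ has below $\Theta^*$'', together with verifying that $\tau$ is genuinely continuous at $\Theta^\sigma$ given that $E_\sigma$ is cut off at exactly $\gamma$. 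Wellfoundedness of $N_\sigma$ is handed to us by \rlem{wellfounded}, and the elementarity of the lifting maps is routine.
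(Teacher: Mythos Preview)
Your proposal is correct and follows essentially the same route as the paper: form the derived-extender ultrapower $\tau\colon L[T^*,\M_\infty^\sigma]\to N_\sigma$, invoke \rlem{wellfounded} for wellfoundedness, and exploit the tension between ``$T^*$ creates no $Q$-structure over $\M_\infty^\sigma$'' and ``$T^*$ (via the ultraproduct representation $\Q=[\sigma\mapsto\Q_\sigma]_\mu$) certifies the $Q$-structure $K(\mathbb{R})$ actually has at $\gamma$''; for (2) you lift $i,j$ to $i^*,j^*$ exactly as the paper does, with the same formula $j^*(i^*(f)(a))=\tau(f)(j(a))$. The only cosmetic difference is that in (2) you route the contradiction through $N_\sigma$ via $j^*$, whereas the paper reads it off directly in $L[\overline{T},S]$ from elementarity of $i^*$; both work, and your closing paragraph correctly identifies the one genuinely delicate point (why certification by $T^*$ transfers to $\tau(T^*)$ and $\overline{T}$), which the paper also leaves at the level of ``by the same argument as above''.
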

We can define a map $\tau: \M_\infty^{\sigma,+}\rightarrow \M_\infty^{\sigma,*}$ as follows. Let $x \in \M_\infty^{\sigma,+}$. There is an $i<\omega$ and a $y$ such that in $Lp(\sigma)$, $x = \pi^{A_i}_{\N_i,\infty}(y)$, where $\pi^{A_i}_{\N_i,\infty}$ is the direct limit map from $H^{\N_i}_{A_i}$ into $\M_\infty^{\sigma,+}$ in $Lp(\sigma)$. Let
\begin{equation*}
\tau(x) = \pi^{A_i}_{\N_i,\M_\infty^{\sigma,*}}(y),
\end{equation*}
where $\pi^{A_i}_{\N_i,\M_\infty^{\sigma,*}}$ witnesses $(\N_i,A_i) \preceq (\M_\infty^{\sigma,*},A_i)$ in the hod direct limit system in $K(\mathbb{R})$.
\begin{lemma}
\label{equality}
\begin{enumerate}
\item $\M_\infty^{\sigma,*} = H^{\M_\infty^{\sigma,*}}_{\pi_\sigma''\Gamma_\sigma}$; furthermore, for any quasi-iterate $\Q$ of $\M_\infty^{\sigma,*}$, $\Q = H^{\Q}_{\pi_\sigma''\Gamma_\sigma}$ and $\pi^{\pi_\sigma''\Gamma_\sigma}_{\M_\infty^{\sigma,*},\Q}(\tau^{\M_\infty^{\sigma,+}}_A)= \tau^{\Q}_A$ for all $A \in \pi_\sigma''\Gamma_\sigma$.
\item $\tau = id$ and $\M_\infty^{\sigma,+} = \M_\infty^{\sigma,*}$.
\item $\pi_\sigma = \pi^{\pi_\sigma''\Gamma_\sigma}_{\M_\infty^{\sigma,+},\infty}$.
\end{enumerate}
\end{lemma}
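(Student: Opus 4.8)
The plan is to treat clauses (1)--(3) as a single argument whose engine is the agreement of the \emph{internal} hod direct limit system of $Lp(\sigma)$ with the \emph{external} one of $K(\mathbb{R})$ on the sequence $\langle \N_i \ | \ i<\omega\rangle$. I would establish that agreement first and then read the three clauses off it.

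First I would show: each $\N_i$ remains strongly $\pi_\sigma(A_i)$-quasi-iterable in $K(\mathbb{R})$, and the $Q$-structures guiding its quasi-iteration are computed the same way in $Lp(\sigma)$ as in $K(\mathbb{R})$; concretely, for every quasi-iterate $\P$ of $\N_i$ the levels of $Lp_\omega(\P)$ below $\delta^\P$ form the same object whether computed in $Lp(\sigma)$ or in $K(\mathbb{R})$. This is the substantive step. It uses the fullness of $\M_\infty^\sigma$ from Lemma \ref{full} (so that the relevant premice are suitable in $K(\mathbb{R})$) together with the fact -- packaged in Lemma \ref{wellfounded} and Theorem \ref{weak condensation} -- that the tree $T^*$, which enforces fullness for $\mathbb{R}$-mice, does not manufacture $Q$-structures for these suitable premice; the wellfoundedness of the ultrapower in Lemma \ref{wellfounded} is precisely what lets one run this argument at the level of the $\N_i$'s and their quasi-iterates rather than only at $\M_\infty^\sigma$ itself. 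Granting it, the quasi-iteration trees and connecting maps that produce the quasi-limit of $\langle\N_i \ | \ i<\omega\rangle$ are literally the same whether computed inside $Lp(\sigma)$ or inside $K(\mathbb{R})$; hence $\M_\infty^{\sigma,+}=\M_\infty^{\sigma,*}$, which is the first half of clause (2), and $\tau$ -- the canonical map induced by the two identical systems -- is then manifestly the identity, which is the second half.

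Clause (1) is now the hull-of-term-relations computation, as in Chapter 7 of \cite{CMI}. Inside $Lp(\sigma)$ (which models $\textsf{AD}^+ + \Theta=\theta_0$) the HOD analysis gives $\M_\infty^{\sigma,+}=H^{\M_\infty^{\sigma,+}}_{\Gamma^\sigma}$, and since the $\N_i$'s are cofinal in the internal system this hull is generated by the images of the term relations $\tau^{\N_i}_{A_i}$ under the quasi-iteration maps. Transporting this across those maps -- which preserve term relations, carrying $\tau^{\N_i}_{A_i}$ to $\tau^{\M_\infty^{\sigma,*}}_{\pi_\sigma(A_i)}$ -- yields $\M_\infty^{\sigma,*}=H^{\M_\infty^{\sigma,*}}_{\pi_\sigma''\Gamma_\sigma}$, and running the same computation with a quasi-iterate $\Q$ of $\M_\infty^{\sigma,*}$ in place of $\M_\infty^{\sigma,*}$ gives $\Q=H^{\Q}_{\pi_\sigma''\Gamma_\sigma}$ and $\pi^{\pi_\sigma''\Gamma_\sigma}_{\M_\infty^{\sigma,*},\Q}(\tau^{\M_\infty^{\sigma,+}}_A)=\tau^{\Q}_A$ for $A\in\pi_\sigma''\Gamma_\sigma$ (the left-hand side making sense by clause (2)).

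Finally, for (3) I would note that $\pi_\sigma\restriction\M_\infty^{\sigma,+}$ -- available, and continuous at $\Theta^\sigma$, by Theorem \ref{weak condensation}(1) -- and the direct limit map $\pi^{\pi_\sigma''\Gamma_\sigma}_{\M_\infty^{\sigma,+},\infty}$ -- which lands in all of $\M_\infty$ because $\pi_\sigma''\Gamma^\sigma$ is cofinal in $\Gamma$ -- are both elementary and agree on the term-relation generators $\tau^{\M_\infty^{\sigma,+}}_{A_0}$, $A_0\in\Gamma^\sigma$: $\pi_\sigma$ returns $\tau^{\M_\infty}_{\pi_\sigma(A_0)}$ by elementarity (term relations being uniformly definable) and so does the direct limit map. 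By clauses (1) and (2), $\M_\infty^{\sigma,+}$ is the hull of these generators, so the two maps coincide. The main obstacle is entirely the content of the second paragraph -- that passing from $Lp(\sigma)$ to $K(\mathbb{R})$ neither destroys the quasi-iterability of the $\N_i$'s nor alters the guiding $Q$-structures; once that is secured, the remaining bookkeeping is routine.
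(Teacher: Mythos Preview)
Your plan inverts the paper's order and in doing so skips the step that actually does the work. The paper proves (1) first: it writes down the factorization
\[
\pi_\sigma\rest\M_\infty^{\sigma,+} \;=\; \pi^{\pi_\sigma''\Gamma_\sigma}_{\Q,\infty}\circ\pi^{\pi_\sigma''\Gamma_\sigma}_{\M_\infty^{\sigma,*},\Q}\circ\tau \qquad (\ast)
\]
for each quasi-iterate $\Q$ of $\M_\infty^{\sigma,*}$, and it is \emph{this} factorization that places $\Q$ (and the collapse of $H^\Q_{\pi_\sigma''\Gamma_\sigma}$) inside a diagram to which Theorem \ref{weak condensation}(2) applies. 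That yields suitability of the collapse, hence $H^\Q_{\pi_\sigma''\Gamma_\sigma}\rest(\delta^\Q+1)=\Q\rest(\delta^\Q+1)$; pushing the agreement through the $(\delta^{+n})^\Q$'s is then a separate genericity-iteration argument (the Ketchersid Lemma 4.35 computation, sketched in the paper). Clause (2) is deduced from (1), and (3) from $(\ast)$ together with $\tau=\mathrm{id}$.

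Your ``substantive step'' tries to get (2) first, by arguing that the $Q$-structures guiding the quasi-iterations of the $\N_i$'s are literally the same in $Lp(\sigma)$ and in $K(\mathbb{R})$. But the lemmas you cite do not give this. Lemma \ref{full} is only about $\M_\infty^\sigma$; Theorem \ref{weak condensation} is about premice that sit in a factorization of $\pi_\sigma$, not about arbitrary quasi-iterates of the $\N_i$'s. To bring a quasi-iterate into the scope of weak condensation you need something like $(\ast)$, which you never write down. There is a second issue: strong $A_i$-quasi-iterability in $Lp(\sigma)$ concerns the term relations $\tau^\P_{A_i}$ for $A_i\subseteq\sigma$, whereas what you need in $K(\mathbb{R})$ is preservation of $\tau^\P_{\pi_\sigma(A_i)}$. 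These are a priori different terms, and their coincidence is part of what (1)--(2) are asserting, not an input you can assume. Finally, even in your treatment of (1) you simply assert $\Q=H^\Q_{\pi_\sigma''\Gamma_\sigma}$ for quasi-iterates $\Q$; but suitability of the collapsed hull only gives agreement up to $\delta^\Q+1$, and the extension to all of $Lp_\omega$ above $\delta^\Q$ is exactly the genericity-iteration step you omit.
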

\begin{proof}
The proof is just that of Lemmata 7.8.7 and 7.8.8 in \cite{CMI}. We first show (1). In this proof, ``suitable" means suitable in $K(\mathbb{R})$. The key is for any quasi-iterate $\Q$ of $\M_\infty^{\sigma,*}$, we have
\begin{equation*}
\pi_\sigma|\M_\infty^{\sigma,+} = \pi^{\pi_\sigma''\Gamma_\sigma}_{\Q,\infty}
\circ\pi^{\pi_\sigma''\Gamma_\sigma}_{\M_\infty^{\sigma,*},\Q}\circ \tau. \ \ \ \ (\textasteriskcentered)
\end{equation*}
Using this and Theorem \ref{weak condensation}, we get $H^{\Q}_{\pi_\sigma''\Gamma_\sigma} = \Q$ for any quasi-iterate $\Q$ of $\M_\infty^{\sigma,*}$. To see this, first note that $\Q$ is suitable; Theorem \ref{weak condensation} implies the collapse $\S$ of $H^{\Q}_{\pi_\sigma''\Gamma_\sigma}$ must be suitable. This means, letting $\delta$ be the Woodin of $\Q$, $H^{\Q}_{\pi_\sigma''\Gamma_\sigma}|(\delta+1) = \Q|(\delta+1)$. Next, we show $H^{\Q}_{\pi_\sigma''\Gamma_\sigma}|((\delta^+)^\Q) = \Q|((\delta^+)^\Q)$. The proof of this is essentially that of Lemma 4.35 in \cite{ketchersid2000toward}. We sketch the proof here. Suppose not. Let $\pi: \S \rightarrow \Q$ be the uncollapse map. Note that crt$(\pi) = (\delta^+)^\S$ and $\pi((\delta^+)^\S) = (\delta^+)^\Q$. Let $\R$ be the result of first moving the least measurable of $\Q|((\delta^+)^\Q)$ above $\delta$ and then doing the genericity iteration (inside $\Q$) of the resulting model to make $\Q|\delta$ generic at the Woodin of $\R$. Let $\mathcal{T}$ be the resulting tree. Then $\mathcal{T}$ is maximal with $lh(\mathcal{T}) = (\delta^+)^\Q$; $\R = Lp(\M(\mathcal{T}))$; and the Woodin of $\R$ is $(\delta^+)^\Q$. Since $\{\gamma^\R_A \ | \ A \in \pi_\sigma''\Gamma_\sigma\}$ are definable from $\{\tau^\Q_{A,(\delta^+)^\Q} \ | \ A \in \pi_\sigma''\Gamma_\sigma\}$, they are in rng$(\pi)$. This gives us that sup$H^{\Q}_{\pi_\sigma''\Gamma_\sigma}\cap (\delta^+)^\Q = (\delta^+)^\Q$, which easily implies $(\delta^+)^\Q \subseteq H^{\Q}_{\pi_\sigma''\Gamma_\sigma}$. The proof that $(\delta^{+n})^\Q\subseteq H^{\Q}_{\pi_\sigma''\Gamma_\sigma}$ for $1 < n < \omega$ is similar and is left for the reader.
\\
\indent (2) easily follows from (1). (3) follows using (\textasteriskcentered) and $\tau = id$.
\end{proof}
\indent For each $\sigma$ such that Theorem \ref{weak condensation} and Lemma \ref{equality} hold for $\sigma$, let $\Sigma_\sigma$ be the canonical strategy for $\M_\infty^\sigma$ as guided by $\pi_\sigma''\Gamma^\sigma$. Recall $\pi_\sigma''\Gamma^\sigma$ is a cofinal collection of $\mathrm{OD}^{K(\mathbb{R})}$ sets of reals. The existence of $\Sigma_\sigma$ follows from Theorem 7.8.9 in \cite{CMI}. Note that $\Sigma_\sigma$ has weak condensation, i.e., suppose $\Q$ is a $\Sigma_\sigma$ iterate of $\M_\infty^{\sigma,+}$ and $i:\M_\infty^{\sigma,+} \rightarrow \Q$ is the iteration map, and suppose $j:\M_\infty^{\sigma,+} \rightarrow \R$ and $k:\R \rightarrow \Q$ are such that $i = k\circ j$ then $\R$ is suitable (in the sense of $K(\mathbb{R})$). 
\begin{definition}[Branch condensation]
\label{branch condensation}
Let $\M_\infty^{\sigma,+}$ and $\Sigma_\sigma$ be as above. We say that $\Sigma_\sigma$ has branch condensation if for any $\Sigma_\sigma$ iterate $\Q$ of $\M_\infty^{\sigma,+}$, letting $k: \M_\infty^{\sigma,+} \rightarrow \Q$ be the iteration map, for any maximal tree $\mathcal{T}$ on $\M_\infty^{\sigma,+}$, for any cofinal non-dropping branch $b$ of $\mathcal{T}$, letting $i = i^{\mathcal{T}}_b$, $j: \M^{\mathcal{T}}_b \rightarrow \P$, where $\P$ is a $\Sigma_\sigma$ iterate of $\M_\infty^\sigma$ with iteration embedding $k$, suppose $k = j\circ i$, then $b = \Sigma_\sigma(\mathcal{T})$. 
\end{definition}
\begin{theorem}
\label{branch condensation}
$(\forall^*_\mu \sigma)(\textrm{A tail of }\Sigma_\sigma \textrm{ has branch condensation.})$
\end{theorem}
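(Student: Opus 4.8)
The plan is to fix $\sigma$ in the $\mu$-measure-one set on which Theorem~\ref{weak condensation} and Lemma~\ref{equality} hold and, for this $\sigma$, to produce a tail of $\Sigma_\sigma$ with branch condensation; since everything used is available $\mu$-a.e., this gives the theorem. Two facts drive the argument. (a) Every $\Sigma_\sigma$-iterate $\Q$ of $\M_\infty^{\sigma,+}$ is suitable in $K(\mathbb{R})$ (fullness preservation, from Theorem~\ref{weak condensation}), so $\pi^{\pi_\sigma''\Gamma^\sigma}_{\Q,\infty}:\Q\to\M_\infty$ is defined and commutes with iteration maps; in particular, by Lemma~\ref{equality}(3), composing such a realization back to $\M_\infty^{\sigma,+}$ recovers $\pi_\sigma$. (b) If $S$ is a mouse, countable in $K(\mathbb{R})$, with embeddings $i:\M_\infty^{\sigma,+}\to S$ and $j:S\to\M_\infty$ such that $j\circ i=\pi_\sigma$, then $S$ is suitable; this is Theorem~\ref{weak condensation}(2).

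Suppose toward a contradiction that no tail of $\Sigma_\sigma$ has branch condensation. Using \textsf{DC}, build a chain of $\Sigma_\sigma$-iterates $\N_0=\M_\infty^{\sigma,+},\N_1,\N_2,\dots$ with iteration maps $e_n:\M_\infty^{\sigma,+}\to\N_n$, together with witnesses to the failure of branch condensation: given $\N_n$, since $\Sigma_{\N_n}$ is a tail of $\Sigma_\sigma$ it lacks branch condensation, so there are a $\Sigma_{\N_n}$-iterate $\N_{n+1}$ with iteration map $k_n:\N_n\to\N_{n+1}$, a maximal tree $\T_n$ on $\N_n$, a cofinal non-dropping branch $b_n\neq\Sigma_{\N_n}(\T_n)$, and $j_n:\M^{\T_n}_{b_n}\to\N_{n+1}$ with $k_n=j_n\circ i^{\T_n}_{b_n}$. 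Each $\N_n$ is suitable by (a). Each side-model $\M^{\T_n}_{b_n}$ is suitable by (b): the composite of $i^{\T_n}_{b_n}\circ e_n:\M_\infty^{\sigma,+}\to\M^{\T_n}_{b_n}$ with $\pi^{\pi_\sigma''\Gamma^\sigma}_{\N_{n+1},\infty}\circ j_n:\M^{\T_n}_{b_n}\to\M_\infty$ equals $\pi^{\pi_\sigma''\Gamma^\sigma}_{\N_{n+1},\infty}\circ k_n\circ e_n=\pi_\sigma$, using (a) for $\N_{n+1}$. Thus the construction runs through all $n<\omega$.

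To finish one wants a contradiction from the badness of the $b_n$. For each $n$ the maximal tree $\T_n$ also has its $\Sigma_{\N_n}$-branch $c_n=\Sigma_{\N_n}(\T_n)$, and $\M^{\T_n}_{c_n}$ is suitable, so both $\M^{\T_n}_{b_n}$ and $\M^{\T_n}_{c_n}$ sit above $\delta(\T_n)$ carrying the full $Q$-structure $Lp(\M(\T_n))$. Were $\M(\T_n)$ low enough in the mouse order that $Lp(\M(\T_n))$ guides branches uniquely, one would conclude $b_n=c_n$, contradicting $b_n\neq\Sigma_{\N_n}(\T_n)$; but the $\N_n$ are tall, so this direct comparison need not close, and this is the crux --- it is exactly why one settles for branch condensation only on a tail. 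The plan there, following Chapter~7 of \cite{CMI} and Sargsyan's analysis of branch condensation, is a descent argument: tracking the disagreement ordinal $\kappa_n=\mathrm{crit}(j_n)$ (the least point at which $b_n$ and $c_n$ part), the commutativity of the realizations forces a canonically associated ordinal of $\M_\infty$ to strictly decrease with $n$, which is impossible; hence the bad stack must break down at a finite stage $n$, that is, $\Sigma_{\N_n}$ is a tail of $\Sigma_\sigma$ with branch condensation. The routine ingredients are fullness preservation and the fact, established above in analyzing $T^*$, that no $Q$-structures for the $\M_\infty^\sigma$ are created; the genuinely delicate point, which I expect to be the main obstacle, is the descent itself --- pinning down the right ``disagreement ordinal'' and showing that the realization maps move it strictly downward along the $\omega$-chain.
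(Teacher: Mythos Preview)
Your setup through the construction of the $\omega$-chain $(\N_n,\T_n,b_n,j_n)$ is fine and tracks the notion of a \emph{bad sequence} from Chapter~7 of \cite{CMI}. The gap is at the end: you propose to derive a contradiction by a ``descent'' of a disagreement ordinal $\kappa_n=\mathrm{crit}(j_n)$ along the realization maps, but you do not carry this out, and in fact this is not how the argument goes. There is no evident reason why the critical points should decrease under the realizations into $\M_\infty$; the realization maps $\pi^{\pi_\sigma''\Gamma^\sigma}_{\N_{n+1},\infty}$ commute with the iteration maps, not with the bad-branch embeddings, so there is no obvious monotone invariant to track.

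The paper (following Theorem~7.9.1 of \cite{CMI}) takes a completely different route to the contradiction. One works not with a single bad sequence but with a countable iterable structure $\overline{H}$, obtained as the transitive collapse of a countable hull of $H=\mathrm{HOD}_{\{\mu,\M_\infty^{\sigma,+},\M_\infty,\pi_\sigma,T^*,X_\sigma,x_\sigma\}}$, in which $\omega_1^V$ collapses to a measurable cardinal $\gamma$ (this replaces the hypothesis HI(c) of \cite{CMI}). One then shows: (1) there is a term $\tau\in\overline{H}$ such that every $\mathrm{Col}(\omega,{<}\gamma)$-generic interpretation of $\tau$ is a $(\rho,\M_\infty^{\sigma,+},\N)$-certified bad sequence; and (2) every countable linear iterate of $\overline{H}$ by its measure still yields truly bad sequences, because such iterates embed back into $H$ under $\pi$. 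The contradiction then comes from an $\textsf{AD}^+$-reflection argument inside $K(\mathbb{R})$: one reflects the existence of such an $\overline{H}$ down to a level where branch uniqueness (via $Q$-structures) does hold, and there the bad sequence cannot exist. The iterable-structure-plus-reflection machinery is the substantive content of the proof, and your descent sketch does not substitute for it.
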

\begin{proof}
The proof is like that of Theorem 7.9.1 in \cite{CMI}. We only mention the key points here. We assume that $\forall^*_\mu \sigma$ no $\Sigma_\sigma$-tails have branch condensation. Fix such a $\sigma$. First, let $X_\sigma =$ rng$(\pi_\sigma\rest \M_\infty^{\sigma,+})$ and 
\begin{equation*}
H = \rm{\rm{H\mathrm{OD}}}_{\{\mu, \M_\infty^{\sigma,+}, \M_\infty, \pi_\sigma, T^*, X_\sigma, x_\sigma\}},
\end{equation*}
where $x_\sigma$ is a real enumerating $M_\infty^{\sigma,+}$. So $H \vDash \textsf{\textsf{\textsf{ZF}C}} + ``\M_\infty^\sigma$ is countable and $\omega_1^V$ is measurable."
\\
\indent Next, let $\overline{H}$ be a collapse of a countable elementary substructure of a sufficiently large rank-initial segment of $H$. Let $(\gamma,\rho,\N,\nu)$ be the preimage of $(\omega_1^V,\pi_\sigma, \M_\infty,\mu)$ under the uncollapse map, call it $\pi$. We have that $\overline{H} \vDash \textsf{\textsf{\textsf{ZF}C}}^- + $ ``$\gamma$ is a measurable cardinal as witnessed by $\nu$." This $\overline{H}$ will replace the countable iterable structure obtained from the hypothesis HI(c) in Chapter 7 of \cite{CMI}. Now, in $K(\mathbb{R})$, the following hold true:
\begin{enumerate}
\item There is a term $\tau \in \overline{H}$ such that whenever $g$ is a generic over $\overline{H}$ for $Col(\omega, <\gamma)$, then $\tau^g$ is a $(\rho,\M_\infty^{\sigma,+},\N)$-certified bad sequence. See Definitions 7.9.3 and 7.9.4 in \cite{CMI} for the notions of a bad sequence and a $(\rho,\M_\infty^{\sigma,+},\N)$-certified bad sequence respectively.
\item Whenever $i: \overline{H} \rightarrow J$ is a countable linear iteration map by the measure $\nu$ and $g$ is $J$-generic for $Col(\omega, < i(\gamma))$, then $i(\tau)^g$ is truly a bad sequence.
\end{enumerate}
The proof of (1) and (2) is just like that of Lemma 7.9.7 in \cite{CMI}. The key is that in (1), any $(\rho,\M_\infty^{\sigma,+},\N)$-certified bad sequence is truly a bad sequence from the point of view of $K(\mathbb{R})$ and in (2), any countable linear iterate $J$ of $\overline{H}$ can be realized back into $H$ by a map $\psi$ in such a way that $\pi = \psi\circ i$.
\\
\indent Finally, using (1), (2), the iterability of $\overline{H}$, and an $\textsf{\textsf{AD}}^+$-reflection in $K(\mathbb{R})$ like that in Theorem 7.9.1 in \cite{CMI}, we get a contradiction.
\end{proof}
Fix a $\sigma$ as in Theorem \ref{branch condensation} and let $\Sigma'_\sigma$ be a tail of $\Sigma_\sigma$ with branch condensation. By the construction of $\Sigma_\sigma$, $\Sigma'_\sigma\notin K(\mathbb{R})$. Using $\mu$ and the fact that $\forall^*_\mu \tau \ \Sigma'_\sigma\rest M_\tau \in M_\tau$, we can lift $\Sigma'_\sigma$ to a strategy on $H_\Omega^M$ with branch condensation, which we also call $\Sigma'_\sigma$. An argument as in Lemma \ref{od levels are enough} shows that $K^{\Sigma'_\sigma}(\mathbb{R})\footnote{$K^{\Sigma'_\sigma}(\mathbb{R})$ is the union of all sound $\Sigma'_\sigma$-$\mathbb{R}$-premice $\M$ such that $\rho_{\omega}(\M)=\mathbb{R}$ and every countable $\M^*$ embeddable into $\M$ has a unique $(\omega,\omega_1+1)$-iteration $\Sigma'_\sigma$ strategy. See \cite{trang2012scales} or \cite{trang2013generalized} for a constructive definition of $K^{\Sigma'_\sigma}(\mathbb{R})$.} \vDash \textsf{AD}^+$. This along with the fact that $\Sigma'_\sigma \notin K(\mathbb{R})$ imply
\begin{equation*}
K^{\Sigma'_\sigma}(\mathbb{R}) \vDash \Theta > \theta_0.
\end{equation*}
Furthermore, since $\Sigma'_\sigma$ is $K(\mathbb{R})$-fullness preserving (in fact is guided by a self-justifying system cofinal in $\powerset(\mathbb{R})\cap K(\mathbb{R})$), $\powerset(\mathbb{R})\cap K(\mathbb{R})$ is a Wadge initial segment of $\powerset(\mathbb{R})\cap L(\Sigma'_\sigma,\mathbb{R})$.

Recall that $T$ is the tree for a universal $\Sigma^2_1$-in-$K(\mathbb{R})$-set and $T^* = \prod_\sigma T\slash \mu$.
\begin{lemma}
\label{fullness}
Let $\powerset(\mathbb{R}) \cap L(T^*,\mathbb{R}) = \powerset(\mathbb{R}) \cap K(\mathbb{R})$.
\end{lemma}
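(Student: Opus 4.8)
The plan is to reduce the claimed equality to the single statement $K(\mathbb{R})=L(T,\mathbb{R})$, proved in $V$, and then transport that statement across the ultrapower of the universe by $\mu$. First I would prove $K(\mathbb{R})=L(T,\mathbb{R})$, which I expect to be the main obstacle. One inclusion is routine: the tree $T$ comes from a $(\Sigma^2_1)^{K(\mathbb{R})}$-scale, so $T\in K(\mathbb{R})$ and hence $L(T,\mathbb{R})\subseteq K(\mathbb{R})$. For the converse, recall $K(\mathbb{R})=L(X)$ with $X=\bigcup\{\M:\M\text{ is a countably iterable }\mathbb{R}\text{-premouse with }\rho(\M)=\mathbb{R}\}$, so it suffices to see $X\in L(T,\mathbb{R})$. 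Fix such an $\M$. As $\M$ is the transitive collapse of the term model on $\mathbb{R}$ given by its first-order theory with real parameters, $\M$ is coded by that theory, a set of reals; and for a formula $\varphi$ and a real $z$, the statement ``$\M\models\varphi(z)$'' holds iff \emph{some} real in $p[T]$ coding a countable elementary hull of $\M$ containing $z$ has its collapse satisfy $\varphi(z)$, iff \emph{every} real in $p[T]$ coding such a hull does so (all countable hulls agree, being elementary in $\M$, and $p[T]$ certifies the iterability of exactly the genuine countable premice). The first form is $\Sigma^2_1$ over $K(\mathbb{R})$ and the second is $\Pi^2_1$, so the theory of $\M$ is $\Delta^2_1$ and therefore lies in $L(T,\mathbb{R})$; since $\M$ is recovered from its theory by an absolute procedure, $\M\in L(T,\mathbb{R})$. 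This is uniform and the relevant class of $\M$'s is definable over $L(T,\mathbb{R})$ (iterability being checked via $T$), so $X\in L(T,\mathbb{R})$ and $K(\mathbb{R})=L(X)\subseteq L(T,\mathbb{R})$. It is exactly this ``$\Sigma^2_1$-capturing'' that uses $\Theta^{K(\mathbb{R})}=\theta_0$ from \rthm{the cmi theorem}; without it the theories of such $\M$ need not be $\Delta^2_1$.

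Next, let $j\colon V\to\widetilde M:=\mathrm{Ult}(V,\mu)$ be the ultrapower of the universe by $\mu$, formed with all functions of $V$. As in the proof of \rlem{wellfounded}, countable completeness of $\mu$ and \textsf{DC} make $\widetilde M$ well-founded, so we identify it with its transitive collapse; $\widetilde M$ is then an inner model with all the ordinals and $\mathrm{crit}(j)=\omega_1$. Countable completeness also gives that every real of $\widetilde M$ is a real of $V$ and that $j$ is the identity on reals: if $[f]_\mu$ represents a real of $\widetilde M$, each ``digit'' $[f_n]_\mu$ is an ordinal below $j(\omega)=\omega$, hence a fixed natural number, and by countable completeness $f$ is $\mu$-a.e.\ constant; so $j(\mathbb{R})=\mathbb{R}$. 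Since $T\in K(\mathbb{R})\subseteq V$, the definition of $T^*$ gives $T^*=j(T)$; this is consistent with the agreement $L[T^*,Y]\rest\omega_1^V=L[T,Y]\rest\omega_1^V$ invoked earlier, as $j$ is the identity below $\omega_1$. Applying $j$ to the equality $K(\mathbb{R})=L(T,\mathbb{R})$ — a first-order assertion about $V$ with parameter $T$ — and using that $L(\cdot)$ from a set parameter is absolute for transitive inner models containing that parameter, we obtain $j(K(\mathbb{R}))=L(j(T),j(\mathbb{R}))=L(T^*,\mathbb{R})$, the last model computed in $V$.

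Finally, fix $B\in\powerset(\mathbb{R})$. Since $B\subseteq\mathbb{R}$ and $j$ fixes every real, $j(B)=B$: indeed $j(B)\subseteq j(\mathbb{R})=\mathbb{R}$, and for a real $x$ we have $x\in j(B)$ iff $x\in B$ by elementarity. Hence, by elementarity of $j$ applied to the membership formula for $K(\mathbb{R})$ together with the identification above,
\[
B\in K(\mathbb{R})\iff j(B)\in j(K(\mathbb{R}))\iff B\in L(T^*,\mathbb{R}),
\]
which is precisely $\powerset(\mathbb{R})\cap L(T^*,\mathbb{R})=\powerset(\mathbb{R})\cap K(\mathbb{R})$. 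Apart from the first step, the remaining points are bookkeeping: that $\prod_\sigma T/\mu$ is literally $j(T)$; that $B=j(B)\in\widetilde M$, so the middle biconditional is meaningful; and that $L(T^*,\mathbb{R})^{\widetilde M}=L(T^*,\mathbb{R})^V$.
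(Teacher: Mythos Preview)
Your first step, that $K(\mathbb{R})=L(T,\mathbb{R})$, is correct and is essentially Mouse Capturing in $K(\mathbb{R})$; this is the same core fact the paper invokes. The gap is in the second step: you form $j\colon V\to\widetilde M=\mathrm{Ult}(V,\mu)$ with all functions of $V$ and then ``apply $j$'' to the equality as if $j$ were fully elementary. But $V=L(\mathbb{R},\mu)$ satisfies only \textsf{DC}, and \L o\'s's theorem for $\mathrm{Ult}(V,\mu)$ genuinely fails here. Concretely, take $f=\mathrm{id}$ and $\phi(x,v)\equiv(x\in\mathbb{R}\wedge x\notin v)$: certainly $\forall^*_\mu\sigma\ \exists x\,\phi(x,\sigma)$, yet there is no $g\in V$ with $g(\sigma)\in\mathbb{R}\setminus\sigma$ for $\mu$-a.e.\ $\sigma$. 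Indeed, for such a $g$ countable completeness would force each coordinate $g(\sigma)(n)$ to be a.e.\ a fixed $k_n$, hence $g(\sigma)=y:=\langle k_n\rangle$ a.e., and then fineness gives $y\in\sigma$ a.e., contradicting $g(\sigma)\notin\sigma$. So \L o\'s fails, $j$ is not elementary, and you cannot transport $K(\mathbb{R})=L(T,\mathbb{R})$ to $j(K(\mathbb{R}))=L(T^*,\mathbb{R})$ as you do. (The same issue undercuts the claim that $\widetilde M$ is an \emph{inner model}: without \L o\'s, $\widetilde M$ need not even be extensional, so identifying it with a transitive collapse is already problematic.) Your reference to \rlem{wellfounded} only yields well-foundedness of a specific \emph{set}-ultrapower, not elementarity of $\mathrm{Ult}(V,\mu)$.

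The paper's argument sidesteps this by never taking an ultrapower of $V$. It works pointwise: by \textsf{MC} in $K(\mathbb{R})$ one has $\forall^*_\mu\sigma\ \powerset(\sigma)\cap L(T,\sigma)=\powerset(\sigma)\cap Lp(\sigma)$, and then the passage to $\powerset(\mathbb{R})\cap L(T^*,\mathbb{R})=\powerset(\mathbb{R})\cap K(\mathbb{R})$ uses the ultraproduct $\prod_\sigma M_\sigma/\mu$, for which \L o\'s is established in \rlem{Los holds} precisely via normality and the $\mathrm{HOD}_{\sigma\cup\{\sigma\}}$-definability of $M_\sigma$. Your global-then-transport strategy could likely be repaired by routing through that ultraproduct (or by isolating exactly which fragment of \L o\'s your argument needs and proving it directly using normality), but as written the elementarity of $j$ is asserted, not proved, and is in fact false.
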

\begin{proof}
By \textsf{MC} in $K(\mathbb{R})$, we have 
\begin{equation*}
(\forall^*_\mu \sigma) (\powerset(\sigma) \cap L(T,\sigma) = Lp(\sigma)\cap \powerset(\sigma)).
\end{equation*}
This proves the lemma.
\end{proof}
We now show that $\mu$ is amenable to $K(\mathbb{R})$ in the sense that $\mu$ restricting to any Wadge initial segment of $\powerset(\mathbb{R})^{K(\mathbb{R})}$ is in $K(\mathbb{R})$.
\begin{lemma}
\label{amenability}
Suppose $S = \{(x,A_x) \ | \ x \in \mathbb{R} \ \wedge \ A_x \in \powerset(\powerset_{\omega_1}(\mathbb{R})) \} \in K(\mathbb{R})$. Then $\mu \rest S = \{ (x,A_x) \ | \ \mu(A_x) = 1\} \in K(\mathbb{R})$.
\end{lemma}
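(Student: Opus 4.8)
The plan is to reduce the statement to one about sets of reals and then locate the resulting set inside $K(\mathbb R)$ using the $\textsf{AD}^+$-structure of $K(\mathbb R)$ together with \rlem{fullness}. Fix a surjection $\pi\colon\mathbb R\rightarrow\powerset_{\omega_1}(\mathbb R)$, say $\pi(y)=\{(y)_n\mid n<\omega\}$ for a fixed recursive unpairing $y\mapsto\langle(y)_n\mid n<\omega\rangle$. For $x\in\dom(S)$ put $A^*_x=\pi^{-1}[A_x]$; this is a $\pi$-invariant subset of $\mathbb R$, it lies in $K(\mathbb R)$, and the assignment $x\mapsto A^*_x$ is coded by a single set of reals $C\in K(\mathbb R)$. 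Since $\mu$ is fine, for any $A\subseteq\powerset_{\omega_1}(\mathbb R)$ we have $\mu(A)=1$ iff $\forall^*_\mu\sigma$ every (equivalently, some) $y$ with $\pi(y)=\sigma$ lies in $\pi^{-1}[A]$. Hence it is enough to prove that $B:=\{x\in\dom(S)\mid\mu(A_x)=1\}$ belongs to $K(\mathbb R)$, for then $\mu\rest S=\{(x,A_x)\mid x\in B\}\in K(\mathbb R)$ since $S\in K(\mathbb R)$.

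To show $B\in K(\mathbb R)$, by \rlem{fullness} it suffices to define $B$ over $L(T^*,\mathbb R)$ from $T^*$ and a real, where $T^*=\prod_\sigma T\slash\mu$ and $T$ is the tree of a universal $\Sigma^2_1$-in-$K(\mathbb R)$ set. Since $K(\mathbb R)\vDash\textsf{AD}^+$, the family $\langle A^*_x\mid x\in\mathbb R\rangle$ has, uniformly in $x$, an $\infty$-Borel code $\mathcal B_x\in K(\mathbb R)$ — e.g. the least one that is $\mathrm{OD}^{K(\mathbb R)}$ from $C$ and $x$ — so that for a fixed formula $\varphi$ and all reals $y$, $y\in A^*_x\Leftrightarrow L[\mathcal B_x,y]\vDash\varphi[\mathcal B_x,y]$; moreover, by universality of $T$ the sequence $\vec{\mathcal B}=\langle\mathcal B_x\mid x\rangle$ is uniformly recoverable from $T$. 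Now let $j$ be the ultrapower map by $\mu$ of a suitable wellorderable inner model capturing $\vec{\mathcal B}$; as in the arguments of \rlem{wellfounded}, \rlem{full} and \rthm{weak condensation}, this ultrapower is wellfounded (using countable completeness of $\mu$ and $\textsf{DC}$), fineness and normality give $[\lambda\sigma.\sigma]_\mu=j''\mathbb R$, and the ultrapower regards $j''\mathbb R$ as a ``$\mu$-generic'' countable set of reals. Unravelling the meaning of $\mu(A_x)=1$ through the ultrapower then yields
\begin{equation*}
\mu(A_x)=1\quad\Longleftrightarrow\quad j''\mathbb R\in j(A_x)\quad\Longleftrightarrow\quad L[j(\mathcal B_x),y]\vDash\varphi[j(\mathcal B_x),y]\text{ for }y\text{ coding }j''\mathbb R,
\end{equation*}
the decoding in the last clause being carried out inside the ultrapower, which there sees $j''\mathbb R$ as countable.

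The last condition is definable over $L(T^*,\mathbb R)$ from $T^*$ and $x$: because $T^*$ is literally a $\mu$-ultrapower of $T$, and $T$ uniformly computes $C$ and hence $\vec{\mathcal B}$ inside $K(\mathbb R)$, the image $j(\vec{\mathcal B})$ is uniformly recovered from $T^*$ by the same recipe pushed through the ultrapower; consequently $L[j(\mathcal B_x),y]$ and the predicate $\varphi$ are available in $L(T^*,\mathbb R)$. Therefore $B\in\powerset(\mathbb R)\cap L(T^*,\mathbb R)=\powerset(\mathbb R)\cap K(\mathbb R)$ by \rlem{fullness}, and hence $\mu\rest S\in K(\mathbb R)$.

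\textbf{The main obstacle.} Everything of substance is in the second paragraph. First, one must make precise the ultrapower of an inner model by $\mu$: as in the counting arguments of \rthm{PD} and the wellfoundedness argument of \rlem{wellfounded}, one uses countable completeness of $\mu$ and $\textsf{DC}$ so that no prior amenability of $\mu$ is needed to see the relevant ultrapower is wellfounded, and one must verify the normality/fineness computation identifying $\mu(A_x)=1$ with the ``$\mu$-generic'' statement, in the style of \rlem{Los holds}. Second, and this is the real point, one must check that passing from $T$ to $T^*$ does not create spurious structure around the images $j(\mathcal B_x)$ of the $\infty$-Borel codes; this is the analogue here of the fact that ``$T^*$ does not create $Q$-structures'' exploited in the proofs of \rlem{full} and \rthm{weak condensation}, and it is precisely what makes the computation of $B$ internal to $L(T^*,\mathbb R)$ rather than merely to the ambient model $M$. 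If one prefers to avoid $\infty$-Borel codes altogether, the same computation can be organised around an $\mathbb R$-mouse $\M\lhd K(\mathbb R)$ with $\rho_\omega(\M)=\mathbb R$ and $S\in\M$, obtained from mouse capturing in $K(\mathbb R)$, using the $\mu$-typical uncollapses $\pi_\sigma\colon\M_\sigma\to\M$ as in the proof of \rlem{uncollapse}; the obstacle is unchanged.
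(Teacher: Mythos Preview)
Your approach is in the right spirit --- $\infty$-Borel codes pushed through the $\mu$-ultraproduct, then \rlem{fullness} --- and is essentially what the paper does, but there is a real gap at the step where you pass from the ultrapower statement to one internal to $L(T^*,\mathbb R)$. The problem is your real $y$. You correctly note that the ultrapower regards $j''\mathbb R$ as countable, so \emph{in the ultrapower} there is a real $y$ with $\pi(y)=j''\mathbb R$, and there $j''\mathbb R\in j(A_x)$ becomes $L[j(\mathcal B_x),y]\vDash\varphi$. But no such $y$ exists in $V$: there is no real enumerating $\mathbb R$. Hence the last clause of your display is a statement evaluated in the ultrapower of $V$, not in $L(T^*,\mathbb R)$, and the sentence ``$L[j(\mathcal B_x),y]$ and the predicate $\varphi$ are available in $L(T^*,\mathbb R)$'' does not follow. (A related wrinkle: you speak of the $\mu$-ultrapower of a \emph{wellorderable} inner model and then of $j(A_x)$, but $A_x\subseteq\powerset_{\omega_1}(\mathbb R)$ cannot live in any wellorderable inner model here, since there is no $\omega_1$-sequence of distinct reals.)

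The paper closes exactly this gap by replacing the nonexistent $y$ with a forcing statement. It fixes a single $\infty$-Borel code $A_S$ for $S$ (chosen to also code $T$), works $\sigma$-by-$\sigma$ in $L(A_S,\sigma)$, and observes
\[
\sigma\in A_x\ \Longleftrightarrow\ L(A_S,\sigma)\vDash\emptyset\Vdash_{Col(\omega,\sigma)}\check\sigma\in A_x,
\]
a first-order statement about $L(A_S,\sigma)$ requiring no enumeration of $\sigma$. Taking the $\mu$-product yields $L(A_S^*,\mathbb R)\vDash\emptyset\Vdash_{Col(\omega,\mathbb R)}\check{\mathbb R}\in A_x$, which \emph{is} internal to $L(A_S^*,\mathbb R)$; since $A_S$ codes $T$ one has $\powerset(\mathbb R)\cap L(A_S^*,\mathbb R)=\powerset(\mathbb R)\cap L(T^*,\mathbb R)$, and \rlem{fullness} finishes. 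If you rewrite your last equivalence as a $Col(\omega,\mathbb R)$-forcing statement over $L(j(\mathcal B_x),\mathbb R)$ (or, more simply, work with one code $A_S$ and the models $L(A_S,\sigma)$), your argument becomes the paper's.
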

\begin{proof}
Let $A_S$ be an $\infty$-Borel code\footnote{If $S\subseteq \mathbb{R}$, $A_S$ is an $\infty$-Borel code for $S$ if $A_S = (T,\psi)$ where $T$ is a set of ordinals and $\psi$ is a formula such that for all $x\in\mathbb{R}$, $x\in S \Leftrightarrow L[T,x] \vDash\psi[T,x]$.} for $S$ in $K(\mathbb{R})$. We may pick $A_S$ such that it is a bounded subset of $\Theta^*$. We may as well assume that $A_S$ is $\mathrm{OD}^{K(\mathbb{R})}$ and $A_S$ codes $T$. This gives us
\begin{equation*}
(\forall^*_\mu \sigma) (\powerset(\sigma) \cap L(A_S,\sigma) = \powerset(\sigma)\cap L(T,\sigma)),
\end{equation*}
or equivalently letting $A_S^* = \prod_\sigma A_S$,
\begin{equation*}
\powerset(\mathbb{R}) \cap L(A_S^*,\mathbb{R}) =  L(T^*,\mathbb{R}).
\end{equation*}
We have the following equivalences:
\begin{eqnarray*}
(x,A_x) \in \mu\rest S &\Leftrightarrow& (\forall^*_\mu \sigma)(\sigma \in A_x\cap \powerset_{\omega_1}(\sigma)) \\ &\Leftrightarrow& (\forall^*_\mu \sigma)(L(A_S,\sigma) \vDash \emptyset \Vdash_{Col(\omega,\sigma)}\check{\sigma} \in A_x\cap \powerset_{\omega_1}(\sigma)) \\ &\Leftrightarrow& L(A_S^*,\mathbb{R})\vDash \emptyset \Vdash_{Col(\omega,\mathbb{R})}\check{\mathbb{R}} \in A_x.
\end{eqnarray*}
The above equivalences show that $\mu \rest S \in L(S^*,\mathbb{R})$. But by Lemma \ref{fullness} and the fact that $\mu \rest S$ can be coded as a set of reals in $ L(S^*,\mathbb{R})$, hence $\mu\rest S \in L(T^*,\mathbb{R})$, we have that $\mu \rest S \in K(\mathbb{R})$.
\end{proof}
\indent Recall we're trying to get a contradiction from assuming $\Theta>\Theta^{K(\mathbb{R})} = \theta_0^{L(\Sigma'_\sigma,\mathbb{R})}$. Let $M = K^{\Sigma'_\sigma}(\mathbb{R})$ and $H = \rm{\rm{H\mathrm{OD}}}^M_\mathbb{R}$. Note that $\powerset(\mathbb{R})^H = \powerset(\mathbb{R})^{K(\mathbb{R})} = \powerset_{\theta_0}(\mathbb{R})^M$. We aim to show that $L(\mathbb{R},\mu)\subseteq H$, which is a contradiction. By a similar argument as the proof of Theorem \ref{amenability} but relativized to $\Sigma'_\sigma$, $\nu =_{\textrm{def}}\mu\rest \powerset(\mathbb{R})^H \in M$; in fact, letting $\rho$ be the restriction of $\mu$ on the Suslin co-Suslin sets of $M$, then $\rho\in M$. We show $\nu$ is OD in $M$. Let $\pi: \mathbb{R}^\omega \rightarrow \powerset_{\omega_1}(\mathbb{R})$ be the canonical map, i.e. $\pi(\vec{x}) = $rng($\vec{x}$). Let $A \subseteq \powerset_{\omega_1}(\mathbb{R})$ be in $H$. There is a natural interpretation of $A$ as a set of Wadge rank less than $\theta_0^M$, that is the preimage $\vec{A}$ of $A$ under $\pi$ has Wadge rank less than $\theta_0^M$. Fix such an $A$; note that $\vec{A}$ is invariant in the sense that whenever $\vec{x}\in\vec{A}$ and $\vec{y}\in\mathbb{R}^\omega$ and rng$(\vec{x})$ = rng$(\vec{y})$ then $\vec{y}\in\vec{A}$. Let $G_{\vec{A}}$ and $G_A$ be the Solovay games corresponding to $\vec{A}$ and $A$ respectively. In these games, players take turns and play finite sequences of reals and suppose $\langle x_i \ | \ i<\omega\rangle \in \mathbb{R}^\omega$ is the natural enumeration of the reals played in a typical play in either game, then the payoff is as follows:
\begin{equation*}
\textrm{Player I wins the play in } G_{\vec{A}} \textrm{ if } \langle x_i \ | \ i<\omega\rangle \in \vec{A},
\end{equation*}
and
\begin{equation*}
\textrm{Player I wins the play in } G_A \textrm{ if } \{ x_i \ | \ i<\omega\}  \in A.
\end{equation*}
\begin{lemma}
$G_A$ is determined.
\end{lemma}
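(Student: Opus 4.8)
The plan is to reduce determinacy of the Solovay-style game $G_A$ to determinacy of the game $G_{\vec A}$ on the reals played according to the unraveled payoff set $\vec A$, and then to invoke determinacy inside $M = K^{\Sigma'_\sigma}(\mathbb{R})$. Recall that $\vec A = \pi^{-1}[A]$ has Wadge rank strictly below $\theta_0^M = \Theta^{K(\mathbb{R})}$, so $\vec A \in K(\mathbb{R})$; since $K(\mathbb{R})\models \textsf{AD}^+$ by \rthm{the cmi theorem}, the game on $\mathbb{R}$ with payoff $\vec A$ — which is really just a game on $\omega$ after fixing a coding of finite sequences of reals by reals, hence has the same Wadge rank in $K(\mathbb{R})$ — is determined in $K(\mathbb{R})$, a fortiori in $M$. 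So fix a winning strategy $\Sigma$ for $G_{\vec A}$ in $M$; without loss of generality say Player I wins.

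First I would argue that $\Sigma$ can be converted into a winning strategy for Player I in $G_A$. The subtlety is that in $G_A$ the players build a \emph{set} of reals rather than a sequence, so the strategy must not depend on the enumeration. The standard Solovay device handles this: one has Player I, using $\Sigma$, simulate an auxiliary run of $G_{\vec A}$ in which \emph{all} reals played so far by both players (in some canonical bookkeeping order) are fed in, so that the set of reals enumerated in the real run equals $\mathrm{rng}(\vec x)$ for the sequence $\vec x$ produced in the simulation. Because $\vec A$ is invariant under re-enumeration (as noted in the statement just before the lemma — $\vec x\in\vec A$, $\mathrm{rng}(\vec x)=\mathrm{rng}(\vec y)$ imply $\vec y\in\vec A$), the outcome in $G_{\vec A}$ depends only on the range, and hence $\Sigma$ translates to a strategy in $G_A$ whose winning condition $\{x_i : i<\omega\}\in A$ is met exactly when the simulated sequence lies in $\vec A$. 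The same translation works symmetrically if instead Player II has the winning strategy for $G_{\vec A}$. This gives determinacy of $G_A$.

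I expect the main obstacle to be the precise bookkeeping in the simulation, i.e. making sure that: (i) every real played by either player in the real run of $G_A$ eventually appears in the auxiliary sequence, so ranges genuinely coincide; and (ii) the auxiliary run is a legal run of $G_{\vec A}$ against $\Sigma$, i.e. Player I's auxiliary moves are dictated by $\Sigma$ and Player II's auxiliary moves are among the reals that will be played. The clean way is to have Player I interleave "his own $\Sigma$-moves" with "copies of Player II's previous moves", and to use a fixed pairing function on $\omega$ to schedule which real gets copied when; since each run is of length $\omega$ and only countably many reals are played, a standard diagonal enumeration suffices. Once the ranges match on every infinite run, invariance of $\vec A$ closes the argument, and the whole construction takes place inside $M$ (it only uses the strategy $\Sigma\in M$ and arithmetic bookkeeping), so $M\models G_A$ is determined. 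Since $A$ was an arbitrary subset of $\powerset_{\omega_1}(\mathbb{R})$ in $H=\mathrm{HOD}^M_{\mathbb{R}}$, this is exactly what is needed to push the argument for $L(\mathbb{R},\mu)\subseteq H$ forward.
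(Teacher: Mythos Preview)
Your proposal has a genuine gap at the very first step. The game $G_{\vec A}$ is a game on $\mathbb{R}$: each move is a finite sequence of reals, hence codable as a single real, but not as an integer. Identifying $\mathbb{R}^\omega$ with $\mathbb{R}$ lets you view the payoff $\vec A$ as a set of reals, but it does nothing to reduce the \emph{moves} to integers. The assertion that $G_{\vec A}$ ``is really just a game on $\omega$ after fixing a coding of finite sequences of reals by reals'' is simply false, and $\textsf{AD}^+$ in $K(\mathbb{R})$ (or in $M$) gives determinacy of integer games and of ordinal games with suitably definable payoff, not of arbitrary real games --- that would be $\textsf{AD}_\mathbb{R}$, which is not available here. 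So the key input ``fix a winning strategy $\Sigma$ for $G_{\vec A}$ in $M$'' is unjustified; producing such a strategy is essentially the content of the lemma.

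The paper's argument supplies exactly the missing piece, and it uses the measure in an essential way. For each countable $\sigma\subseteq\mathbb{R}$ one considers the restricted game $G_A^\sigma$ in which both players must play reals from $\sigma$; after enumerating $\sigma$ this \emph{is} an integer game, hence determined by $\textsf{AD}$. Without loss of generality Player I wins on a $\nu$-measure-one set of $\sigma$'s; for each such $\sigma$ one takes the canonical Third Periodicity strategy $\tau_\sigma$ and integrates these into a single strategy $\tau$ for the full game $G_A$ using normality of $\rho$ (the restriction of $\mu$ to the Suslin co-Suslin sets of $M$, needed because the relevant regressive functions escape $H$): since $\forall^*_\rho\sigma\ \tau_\sigma(\emptyset)\in\sigma$, normality fixes a single first move, and one continues inductively. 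Your bookkeeping device for passing from $G_{\vec A}$ to $G_A$ is fine in itself, but it presupposes a global winning strategy for the real game; the actual work is in manufacturing that global strategy from the local integer-game strategies via the measure.
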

\begin{proof}
For each $\vec{x} \in \mathbb{R}^\omega$, let $\sigma_{\vec{x}} = $ rng$(\vec{x})$. Consider the games $G^{\vec{x}}_{\vec{A}}$ and $G^{\sigma_{\vec{x}}}_{A}$ which have the same rules and payoffs as those of $ G_{\vec{A}}$ and $G_A$ respectively except that players are required to play reals in $\sigma_{\vec{x}}$. Note that these games are determined and Player I wins the game $G^{\vec{x}}_{\vec{A}}$ iff Player I wins the corresponding game $G^{\sigma_{\vec{x}}}_{A}$.
\\
\indent Without loss of generality, suppose $\nu(\{\sigma\in\powerset_{\omega_1}(\mathbb{R}) \ | \ \textrm{Player I wins } G_A^\sigma\}) = 1$. For each such $\sigma$, let $\tau_\sigma$ be the canonical winning strategy for Player I given by the Moschovakis's Third Periodicity Theorem. We can easily integrate these strategies to construct a strategy $\tau$ for Player I in $G_A$. We know
\begin{equation*}
\forall^*_\rho \sigma \ \tau_\sigma(\emptyset)\in\sigma.
\end{equation*}
We have to use $\rho$ since the set displayed above in general does not have Wadge rank less than $\theta_0$ in $M$. Normality of $\rho$ implies
\begin{equation*}
\exists x\in\mathbb{R} \ \forall^*_\rho \sigma \ \tau_\sigma(\emptyset) = x.
\end{equation*}
Let $\tau(\emptyset) = x$ where $x$ is as above. Now let $y$ be II's response in $G_A$ and $G_{\vec{A}}$. Since $\forall^*_\rho \sigma \ y\in \sigma$, by normality, 
\begin{center}
$\exists z\in \mathbb{R} \forall^*_\rho \sigma \ \tau_\sigma(x,y) = z$.
\end{center}
Let then $\tau(x,y)=z$. It's clear that the above procedure defines $\tau$ on all finite moves. It's easy to show $\tau$ is a winning strategy for Player I in $G_A$.  
\end{proof}
The lemma and standard results of Woodin (see \cite{woodin1983ad}) show that $\rho$ (as defined in the previous lemma) is the unique normal fine measure on the Suslin co-Suslin sets of $M$ and hence $\rho \in \mathrm{OD}^M$. This means $\rho\rest\powerset(\mathbb{R})^H = \nu$ is $\mathrm{OD}$ in $M$. This implies $L(\mathbb{R},\nu) \subseteq H$. But $L(\mathbb{R},\nu) = L(\mathbb{R},\mu)$. Contradiction.
Now we know $\Theta^{K(\mathbb{R})} = \Theta$. We want to show $\powerset(\mathbb{R})\cap K(\mathbb{R}) = \powerset(\mathbb{R})$.
\begin{lemma}
\label{full determinacy}
$\powerset(\mathbb{R}) \cap K(\mathbb{R}) = \powerset(\mathbb{R})$. Hence $L(\mathbb{R},\mu) \vDash \textsf{\textsf{AD}}$.
\end{lemma}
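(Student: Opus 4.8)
The plan is to show that $\mu$ is amenable to $K(\mathbb{R})$ and to deduce from this that every set of reals of $V=L(\mathbb{R},\mu)$ already lies in $K(\mathbb{R})$. Concretely, set $\bar\mu=\mu\cap\powerset(\powerset_{\omega_1}(\mathbb{R}))^{K(\mathbb{R})}$ — a legitimate object since $K(\mathbb{R})\models\textsf{ZF}$ — and aim to prove $\bar\mu\in K(\mathbb{R})$. For each Wadge-initial segment $S$ of $\powerset(\powerset_{\omega_1}(\mathbb{R}))^{K(\mathbb{R})}$, whose members we code via $\pi\colon\mathbb{R}^\omega\to\powerset_{\omega_1}(\mathbb{R})$ by sets of reals of $K(\mathbb{R})$, the restriction $\mu\rest S$ lies in $K(\mathbb{R})$: this is Lemma \ref{amenability}, whose proof — choose an $\infty$-Borel code $A_S$ for $S$ that is a bounded subset of $\Theta^{K(\mathbb{R})}$, and use $\textsf{MC}$ in $K(\mathbb{R})$ together with the identity $\powerset(\mathbb{R})\cap L(T^*,\mathbb{R})=\powerset(\mathbb{R})\cap K(\mathbb{R})$ of Lemma \ref{fullness} to see that $\mu\rest S$ is coded by a set of reals of $L(A_S^*,\mathbb{R})$, hence of $K(\mathbb{R})$ — goes through unchanged, and is now available for \emph{every} such $S$ precisely because $\Theta^{K(\mathbb{R})}=\Theta$. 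One then glues these restrictions along the stratification of $\powerset(\powerset_{\omega_1}(\mathbb{R}))^{K(\mathbb{R})}$ by Wadge rank of $\pi$-codes, using that $\Theta$ is regular (Lemma \ref{regularity of theta}) and hence regular in $K(\mathbb{R})$, together with the coherence of the $\infty$-Borel-code computations above, to obtain $\bar\mu\in K(\mathbb{R})$.

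Granting $\bar\mu\in K(\mathbb{R})$, I would prove by induction on $\gamma$ that $L_\gamma(\mathbb{R},\mu)\in K(\mathbb{R})$ and that every subset of $\powerset_{\omega_1}(\mathbb{R})$ — in particular, every set of reals — lying in $L_\gamma(\mathbb{R},\mu)$ belongs to $K(\mathbb{R})$. At a successor stage the inductive hypothesis forces the predicate appearing in $L_{\gamma+1}(\mathbb{R},\mu)=\mathrm{Def}\big(L_\gamma(\mathbb{R},\mu);\in,\mu\cap L_\gamma(\mathbb{R},\mu)\big)$ to be simply $\bar\mu\cap L_\gamma(\mathbb{R},\mu)\in K(\mathbb{R})$, so $K(\mathbb{R})$, being a model of $\textsf{ZF}$, computes $L_{\gamma+1}(\mathbb{R},\mu)$ — and all its new subsets of $\powerset_{\omega_1}(\mathbb{R})$ — inside itself. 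Limit stages are unions: the sequence $\langle L_\gamma(\mathbb{R},\mu)\mid\gamma<\lambda\rangle$ is definable over $K(\mathbb{R})$ from the single parameter $\bar\mu$ via the recursion $L_{\gamma+1}=\mathrm{Def}(L_\gamma;\bar\mu\cap L_\gamma)$, which by the inductive hypothesis reproduces the true levels, so its union lies in $K(\mathbb{R})$ by Replacement there. Hence $\powerset(\mathbb{R})^V=\bigcup_\gamma\powerset(\mathbb{R})^{L_\gamma(\mathbb{R},\mu)}\subseteq K(\mathbb{R})$, the reverse inclusion being trivial, so $\powerset(\mathbb{R})\cap K(\mathbb{R})=\powerset(\mathbb{R})$. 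Finally, $K(\mathbb{R})\models\textsf{AD}^+$ by Theorem \ref{the cmi theorem}; since $V$ and $K(\mathbb{R})$ have the same reals and the same sets of reals, any strategy witnessing determinacy of a set in $K(\mathbb{R})$ witnesses it in $V$, so $L(\mathbb{R},\mu)\models\textsf{AD}$.

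I expect the main obstacle to be establishing $\bar\mu\in K(\mathbb{R})$, i.e.\ upgrading Lemma \ref{amenability} from individual Wadge-bounded restrictions of $\mu$ to the restriction of $\mu$ to all of $\powerset(\powerset_{\omega_1}(\mathbb{R}))^{K(\mathbb{R})}$. Each Wadge-bounded piece is handled by the $\infty$-Borel-code argument already in hand, but assembling the pieces — without recourse to $\mu$ itself — into a single element of $K(\mathbb{R})$ is delicate, and is where one must use both the regularity of $\Theta$ in $K(\mathbb{R})$ and the uniform, coherent nature of the computation in Lemma \ref{amenability}, in the spirit of the amenability arguments of Chapter 7 of \cite{CMI}. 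Once $\bar\mu\in K(\mathbb{R})$ is secured, the remainder is a routine transfinite induction together with the absoluteness of winning strategies between $K(\mathbb{R})$ and $V$.
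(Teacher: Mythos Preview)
Your proof has a genuine gap exactly where you anticipate it: the claim $\bar\mu\in K(\mathbb{R})$ is not established, and your proposed gluing does not work. Lemma~\ref{amenability} puts each Wadge-bounded restriction $\mu\rest S$ into $K(\mathbb{R})$, but it does so via the parameter $A_S^*=\prod_\sigma A_S/\mu=j_\mu(A_S)$, which is manufactured using $\mu$ itself. Thus the assignment $S\mapsto\mu\rest S$ is only defined externally; to assemble these pieces into a $\Theta$-sequence inside $K(\mathbb{R})$ you would need $j_\mu\rest\Theta$ (or something equivalent) already in $K(\mathbb{R})$, which is essentially $\bar\mu$. Regularity of $\Theta$ is irrelevant here---the problem is uniformity of the definition, not cofinality.

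The paper avoids this by never forming $\bar\mu$. It first proves a condensation-style fact: if $L_{\alpha+1}(\mathbb{R})[\mu]$ contains a new set of reals, then $\mathbb{R}$ surjects onto $L_\alpha(\mathbb{R})[\mu]$ (the point being that $\mu$ is a predicate on $\powerset(\mathbb{R})$, so hulls containing all reals collapse correctly). Hence the least bad $\alpha$ satisfies $\alpha<\Theta=\Theta^{K(\mathbb{R})}$. Then a case split does the work. If $\powerset(\mathbb{R})\cap L_\alpha(\mathbb{R})[\mu]\subsetneq\powerset(\mathbb{R})\cap K(\mathbb{R})$, a \emph{single} application of Lemma~\ref{amenability} to this one family gives $\mu\rest(\powerset(\mathbb{R})\cap L_\alpha(\mathbb{R})[\mu])\in K(\mathbb{R})$, whence $A\in K(\mathbb{R})$, contradiction. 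If instead equality holds, then since $\alpha<\Theta$ the surjection $\mathbb{R}\twoheadrightarrow L_\alpha(\mathbb{R})[\mu]$ yields (via Wadge rank) a surjection $\mathbb{R}\twoheadrightarrow\Theta^{K(\mathbb{R})}=\Theta$, contradicting the definition of $\Theta$. This second case is precisely the idea you are missing: it is what lets the argument terminate without ever needing the full $\bar\mu$ inside $K(\mathbb{R})$.
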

\begin{proof}
First we observe that if $\alpha$ is such that there is a new set of reals in $L_{\alpha+1}(\mathbb{R})[\mu] \backslash L_{\alpha}(\mathbb{R})[\mu]$ then there is a surjection from $\mathbb{R}$ onto $L_{\alpha}(\mathbb{R})[\mu]$. This is because the predicate $\mu$ is a predicate for a subset of $\powerset(\mathbb{R})$, which collapses to itself under collapsing of hulls of $L_{\alpha}(\mathbb{R})[\mu]$ that contain all reals. With this observation, the usual proof of condensation (for $L$) goes through with one modification: one must put all reals into hulls one takes.
\\
\indent Now suppose for a contradiction that there is an $A \in \powerset(\mathbb{R})\cap L(\mathbb{R},\mu)$ such that $A \notin K(\mathbb{R})$. Let $\alpha$ be least such that $A \in L_{\alpha+1}(\mathbb{R})[\mu]\backslash L_{\alpha}(\mathbb{R})[\mu]$. We may assume that $\powerset(\mathbb{R})\cap L_{\alpha}(\mathbb{R})[\mu] \subseteq K(\mathbb{R})$. By the above observation, $\alpha < \Theta = \Theta^{K(\mathbb{R})}$ because otherwise, there is a surjection from $\mathbb{R}$ on $\Theta$, which contradicts the definition of $\Theta$. Now if $\powerset(\mathbb{R})\cap L_{\alpha}(\mathbb{R})[\mu] \subsetneq \powerset(\mathbb{R}) \cap K(\mathbb{R})$, then by Lemma \ref{amenability}, $\mu \rest\powerset(\mathbb{R})\cap L_{\alpha}(\mathbb{R})[\mu] \in K(\mathbb{R})$. But this means $A \in K(\mathbb{R})$. So we may assume $\powerset(\mathbb{R})\cap L_{\alpha}(\mathbb{R})[\mu] = \powerset(\mathbb{R}) \cap K(\mathbb{R})$. But this means that we can in $L_\Theta(\mathbb{R})[\mu]$ use $\mu \rest \powerset(\mathbb{R})\cap L_{\alpha}(\mathbb{R})[\mu]$ compute $\Theta^{K(\mathbb{R})}$ and this contradicts the fact that $\Theta^{K(\mathbb{R})} = \Theta$.
\end{proof}

\section{Open problems and questions}

We first mention the following
\\
\\
\indent \textbf{Conjecture: } Suppose $L(\mathbb{R})\vDash \textsf{DC} + \Theta$ is inaccessible. Then $L(\mathbb{R})\vDash \textsf{AD}$.
\\
\\
\indent This is arguably the analogous statement in $L(\mathbb{R})$ of our main theorem. It is tempting to conjecture that if $L(\mathbb{R})\vDash \rm{\textsf{DC}} + \Theta > \omega_2$ then $L(\mathbb{R})\vDash \textsf{\textsf{AD}}$ but this is known to be false by theorems of Harrington \cite{harrington1977long}. Next, we mention the following uniqueness problem which concerns the relationship between \textsf{AD} models of the form $L(\mathbb{R},\mu)$.
\\
\\
\indent \textbf{Open problem: } Suppose $L(\mathbb{R,\mu_\textrm{i}}) \vDash ``\rm{\textsf{ZF} + \textsf{DC} + \textsf{AD}} + \mu_i$ is a normal fine measure on $\powerset_{\omega_1}(\mathbb{R})$" for $i = 0,1$. Must $L(\mathbb{R},\mu_0) = L(\mathbb{R},\mu_1)$?
\\
\\
\indent We suspect that the answer is no but haven't been able to construct two distinct models of the form $L(\mathbb{R},\mu)$ that satisfy \textsf{AD}. By \rthm{mu unique}, if $L(\mathbb{R},\mu_0)$ and $L(\mathbb{R},\mu_1)$ are the same model then $\mu_0\cap L(\mathbb{R},\mu_0) = \mu_1\cap L(\mathbb{R},\mu_1)$. A generalization of the problem proved in this paper is to consider determinacy in models of the form $L(S,\mathbb{R},\mu)$ where $S$ is a set of ordinals and $L(S,\mathbb{R},\mu)\vDash ``\rm{\textsf{ZF} + \textsf{DC}} + \Theta > \omega_2 + \mu$ is a normal fine measure on $\powerset_{\omega_1}(\mathbb{R})$". Here is a (vague) conjecture.
\\
\\
\indent \textbf{Conjecture: } Let $L(S,\mathbb{R},\mu)$ be as above. Let $\Theta = \Theta^{L(S,\mathbb{R},\mu)}$ and $M_\infty$ be the maximal model of determinacy in $L(S,\mathbb{R},\mu)$. Then either $\Theta^{M_\infty} = \Theta$ or there is a model of ``$\textsf{\textsf{AD}}_\mathbb{R}+\Theta$ is regular" containing $\mathbb{R}\cup \mathrm{OR}$.
\\
\\
In another direction, we could ask about how to identify the first stage in the core model induction (under appropriate hypotheses) that reaches $\textsf{\textsf{AD}}^{L(\mathbb{R},\mu)}$ where $\mu$ comes from some filter on $\powerset_{\omega_1}(\mathbb{R})$ and $L(\mathbb{R},\mu)\vDash``\mu$ is a normal fine measure on $\powerset_{\omega_1}(\mathbb{R})$". A problem of this kind is the following
\\
\\
\indent \textbf{Open problem: }Suppose $I_{NS}$ is saturated and $WRP_2^*(\omega_2)$\footnote{$I_{NS}$ is the nonstationary ideal on $\omega_1$ and $WRP_2^*(\omega_2)$ is defined in section 9.5 of \cite{Woodin}.}. Must there be a filter $\mu$ on $\powerset_{\omega_1}(\mathbb{R})$ such that $L(\mathbb{R},\mu) \vDash ``\textsf{\textsf{AD}} + \mu$ is a normal fine measure on $\powerset_{\omega_1}(\mathbb{R})$"?
\\
\\
This problem is discussed in \cite{DFSR} though in a slightly different formulation. The point is that the hypothesis of the problem is obtained in a $\mathbb{P}_{\textrm{max}}$-extension of a model of the form $L(\mathbb{R},\mu) \vDash ``\textsf{\textsf{AD}} + \mu$ is a normal fine measure on $\powerset_{\omega_1}(\mathbb{R})$".

\bibliographystyle{plain}
\bibliography{Rmicebib}
\end{document}